\documentclass[12pt]{article}
\usepackage{amsmath,amssymb}
\usepackage{amsthm}
\usepackage{epsfig}
\usepackage{graphics}
\usepackage{graphicx}
\usepackage{colordvi}
\usepackage{mathrsfs} 
\usepackage{stmaryrd}
\usepackage{geometry}
\usepackage{dsfont}
\usepackage{tikz}
\usetikzlibrary{automata}
\usepackage{url}
\usepackage[colorlinks=off]
{hyperref}
\hypersetup{
    colorlinks=true,
   linkcolor=blue!75!black,
    filecolor=blue!75!black,      
    urlcolor=blue!75!black, 
    citecolor=blue!75!black,
}
\usepackage[textsize=small]{todonotes}
\usepackage[margin=1cm, size=small]{caption}
\usepackage{multirow}
\allowdisplaybreaks[4]
\topmargin -1.5cm \textwidth 7.5in \textheight 9in
\oddsidemargin
-.5in \evensidemargin -.5in\marginparwidth 0.4in
\usepackage{color,soul}
\usepackage{xcolor}

\usepackage{etoolbox}
\patchcmd{\thebibliography}{\section*}{\section}{}{}
\usepackage{tocloft}

\newcommand{\cc}{{^\circ}}
\newcommand{\1}{\mathds 1}
\newcommand{\G}{\mathscr G}

\newcommand{\F}{\mathscr F}
\newcommand{\B}{\mathscr B}

\newcommand{\ms}{\mathscr}
\renewcommand{\P}{{\mathbb P}}
\newcommand{\E}{{\mathbb E}}

\newcommand{\R}{{\Bbb R}}

\newcommand{\mix}{{\rm mix}}

\newcommand{\defeq}{\stackrel{\rm def}{=}}

\newcommand{\tr}{{\sf tr}}
\renewenvironment{proof}[1][\proofname]{\noindent {\bfseries #1.}\;}{\hfill\ensuremath{\blacksquare}\\}
\renewcommand{\d}{{\mathrm d}}
\newcommand{\e}{{\rm e}}
\newcommand{\ok}{{\overline{\kappa}}}

\newcommand{\eqspace}{{\quad \,}}

\usepackage{currfile}
\usepackage{fancyhdr}
\pagestyle{plain}
  
\fancyhead[C]{{\tt file:\currfilename\; date:\today}} 
\fancyhead[R]{}
\fancyhead[L]{}

\setstcolor{red}

\newtheoremstyle{slantthm}{10pt}{10pt}{\slshape}{}{\bfseries}{}{.5em}{\thmname{#1}\thmnumber{ #2}\thmnote{ (#3)}.}
\newtheoremstyle{slantrmk}{10pt}{10pt}{\rmfamily}{}{\bfseries}{}{.5em}{\thmname{#1}\thmnumber{ #2}\thmnote{ (#3)}.}

\begin{document}
\theoremstyle{slantthm}
\newtheorem{thm}{Theorem}[section]
\newtheorem{prop}[thm]{Proposition}
\newtheorem{lem}[thm]{Lemma}
\newtheorem{cor}[thm]{Corollary}
\newtheorem{defi}[thm]{Definition}
\newtheorem{disc}[thm]{Discussion}
\newtheorem*{nota}{Notation}
\newtheorem{conj}[thm]{Conjecture}
\newtheorem*{thmm}{Theorem}

\theoremstyle{slantrmk}
\newtheorem{ass}[thm]{Assumption}
\newtheorem{rmk}[thm]{Remark}
\newtheorem{eg}[thm]{Example}
\newtheorem{que}[thm]{Question}
\numberwithin{equation}{section}
\newtheorem{quest}[thm]{Quest}
\newtheorem{prob}[thm]{Problem}

\title{\vspace{-.6cm}\bf The replicator equation in stochastic spatial evolutionary games}
\author{Yu-Ting Chen\footnote{Department of Mathematics and Statistics, University of Victoria, British Columbia, Canada.}\,\,\footnote{Email: \url{chenyuting@uvic.ca}}}
\date{\today
\vspace{-.3cm}
}

\maketitle
\vspace{-.3cm}
\abstract{We study the multi-strategy stochastic evolutionary game with death-birth updating in expanding spatial populations of size $N\to \infty$. The model is a voter model perturbation. For typical populations, we require perturbation strengths satisfying $1/N\ll w\ll 1$. Under appropriate conditions on the space, the limiting density processes of strategy are proven to obey the replicator equation, and the normalized fluctuations converge to a Gaussian process with the Wright--Fisher covariance function in the limiting densities. As an application, we resolve in the positive a conjecture from the biological literature that the expected density processes approximate the replicator equation on many non-regular graphs.\medskip

\noindent \emph{Keywords:} Evolutionary games; voter model; coalescence; almost exponentiality of hitting times; the replicator equation; the Wright--Fisher diffusion.\smallskip 

\noindent \emph{Mathematics Subject Classification (2000):} 60K35, 82C22, 60F05, 60J99.
}
\vspace{-.2cm}

\tableofcontents

\section{Introduction}\label{sec:intro} 
The replicator equation is the most widely applied dynamics among evolutionary game models. It also gives the first dynamical model in evolutionary game theory \cite{TJ}, thereby establishing an important connection to theoretical explanations of animal behaviors \cite{MP}. For this application, the derivation of the equation considers a large well-mixed population. The individuals implement strategies from a finite set $S$ with $\#S\geq  2$, such that the payoff to $\sigma\in S$ from playing against $\sigma'\in S$ is $\Pi(\sigma,\sigma')$. In the continuum, the density $X_\sigma$ of strategy $\sigma$ evolves with a per capita rate given by the difference between its payoff 
\begin{align}\label{def:Fsigma}
F_{\sigma}(X)=\sum_{\sigma'\in S}\Pi(\sigma,\sigma')X_{\sigma'}
\end{align}
and the average payoff of the population, where $X=(X_{\sigma'};\sigma'\in S)$.
 Hence, the vector density process of strategy  obeys the following replicator equation:
\begin{align}\label{eq:replicator}
\dot{X}_\sigma=X_\sigma\left(F_\sigma(X)-\sum_{\sigma''\in S}F_{\sigma''}(X)X_{\sigma''}\right),\quad \sigma\in S.
\end{align}
This equation is a point of departure for studying connections between the payoff matrix $(\Pi(\sigma,\sigma'))_{\sigma,\sigma'\in S}$, and the equilibrium states of the model by methods from dynamical systems. See \cite{Cressman,HS} for an introduction and more properties. The replicator equation also arises from the Lotka--Volterra equation of ecology and Fisher's fundamental theorem of natural selection \cite{HS,SS}. 

In this paper, we consider the stochastic evolutionary game dynamics in large finite structured populations. Our goal is to prove that the vector density processes of strategy converge to the replicator equation. In this direction, one of the major results in the biological literature is the convergence to the replicator equation on large random regular graphs~\cite{ON}. The authors further conjecture that their approximations extend to more general graphs \cite[Section~5]{ON}. To obtain the proofs, we view the model as a perturbation of the voter model, since this viewpoint has made possible several mathematical results for it (e.g. \cite{CDP:2,C:BC, CD,CMN,ALCFYN,C:EGT}). Our starting point here is the method in \cite{C:EGT}, extended from \cite{CCC,CC}, for proving the diffusion approximations of the time-changed density processes of strategy under \emph{weak selection}. In that context, the corresponding perturbations away from the voter model use strengths typically given by $w=\mathcal O(1/N)$, where $N$ is the population size. 

The questions from \cite{ON} nevertheless concern very different properties. The crucial step of the method in \cite{C:EGT} develops along the equivalence of probability laws in the limit between the evolutionary game model and the voter model. Now, this property breaks down for nontrivial parameters according to the limiting equation from \cite{ON}; distributional limits of the density processes under the evolutionary game and the voter model degenerate to delta distributions of distinct deterministic functions as solutions of differential equations. As will be explained in this introduction, the convergence to the replicator equation also requires the different range of perturbation strengths $w$ satisfying $1/N\ll w\ll 1$. This stronger perturbation implies weaker relations between the two models, and thus, calls for perturbation estimates of the evolutionary game model by  the voter model generalizing those in \cite{C:EGT}. With this change of perturbation strengths, the choice of time changes for the density processes and the characterization of coefficients of the limiting equation are the further tasks to be settled.

Before further explanations of the main results of \cite{ON,C:EGT}, let us specify the evolutionary game model considered throughout this paper. First, to define spatial structure, we impose directed weights $q(x,y)$ on all  pairs of sites $x$ and $y$ in a given population of size $N$. We assume that $q$ is an irreducible probability transition kernel with a zero trace $\sum_{x}q(x,x)=0$. The perturbation strength $w>0$ defines the selection intensity of the model in the following form: for an individual at site $x$ using strategy $\xi(x)$, its interactions with the neighbors determine the fitness as the sum $1-w+w\sum_{y}q(x,y)\Pi\big(\xi(x),\xi(y)\big)$, where $\xi(y)$ denotes the strategy held by the neighbor at $y$. Under the condition of positive fitness by tuning the selection intensity appropriately, the death-birth updating requires that in a transition of state, an individual is chosen to die with rate $1$. Then the neighbors compete to fill in the site by reproduction with probability proportional to the fitness. Although the main results of this paper extend to include mutations of strategies, we relegate this additional mechanism until Section~\ref{sec:mainresults}. 

Besides the case of mean-field populations, the density processes of strategy play a significant role in the biological literature for studying equilibrium states of spatial evolutionary games. Here and throughout this paper, we refer the density of $\sigma$ under population configuration $\xi$ to the weighted sum $\sum_x \pi(x)\1_\sigma\big( \xi(x)\big)$, where $\pi$ is the stationary distribution associated with the transition probability $q$. For such macroscopic descriptions of the model, the critical issue arises from the non-closure of the stochastic equations. The density processes are projections of the whole system, and in general, the density functions are not Markov functions in the sense of \cite{RP}. More specifically, for the evolutionary game with death-birth updating introduced above, the microscopic dynamics from pairwise interactions determine the densities' dynamics. It is neither clear how to reduce the densities' dynamics analytically in the associated Kolmogorov equations. See \cite[Section~1]{C:EGT} for more details on these issues and the physics method discussed below. 

One of the main results in \cite{ON} shows that for selection intensities $w\ll 1$, the \emph{expected} density processes on  large random $k$-regular graphs for any integer $k\geq 3$ approximately obey the following extended form of the replicator equation :
\begin{align}\label{eq:replicatorext}
\dot{X}_\sigma=wX_\sigma\left(F_\sigma(X)+\widetilde{F}_\sigma(X)-\sum_{\sigma''\in S}F_{\sigma''}(X)X_{\sigma''}\right),\quad \sigma\in S.
\end{align}
Here, $F_\sigma(X)$ and $\widetilde{F}_\sigma(X)$ are linear functions in $X=(X_{\sigma'};\sigma'\in S)$ such that the constant coefficients are explicit in the payoff matrix and the graph degree $k$. See \cite[Equations (22) and (36)]{ON}. Note that \eqref{eq:replicatorext} underlines the nontrivial effect of spatial structure, since the coefficients are very different from those for the replicator equation \eqref{def:Fsigma} in mean-field populations. For the derivation of \eqref{eq:replicatorext}, \cite{ON} applies the physics method of pair approximation. It enables the asymptotic closure of the equations of the density processes by certain moment closure approximations and circumvents the fundamental issues discussed above. Moreover, based on computer simulations from \cite{OHLN}, the authors of \cite{ON} conjecture that the approximate replicator equation for the density processes  applies to many non-regular graphs, provided that the constant graph degree $k$ in the coefficients of the replicator equation is replaced by the corresponding average degree. In approaching this conjecture, it is still not clear on how the average degrees of graphs enter. The method in this paper does not extend to this generality either. On the other hand, even within the scope of large random regular graphs, the constant graph degrees and the locally regular tree-like property seem essential in \cite{ON}. We notice that locally tree-like spatial structures are known to be useful to pair approximations in general \cite{SF}. 

In the case of two strategies, the supplementary information (SI) of \cite{OHLN} shows that the density processes of a fixed one approximate the Wright--Fisher diffusion with drift. The derivation also applies pair approximations on large random regular graphs, although it is noticeably different from the derivation in \cite{ON} for the replicator equation on graphs. (A slow-fast dynamical system for the density and a certain rapidly convergent local density is considered in \cite[SI]{OHLN}.) It is neither clear how to justify the derivation in \cite[SI]{OHLN} mathematically. On the other hand, the diffusion approximations of the density processes can be proven on large finite spatial structures subject to appropriate, but general, conditions \cite[Theorem~4.6]{C:EGT} that include random regular graphs as a special case. See \cite{CFM06,CFM08} for mathematical investigations of moment closure in other spatial biological models and some general discussions, among other mathematical works in this direction.

The method in \cite{C:EGT} begins with the aforementioned asymptotic equivalence of probability laws via perturbations for $w=\mathcal O(1/N)$ (not just equivalence of laws of the density processes). The $1/N$-threshold is sharp such that the critical case yields nontrivial drifts of the limiting diffusions. This relation reduces the convergence of the game density processes to a convergence problem of the voter model. For the latter, fast mixing of spatial structure ensures approximations of the coalescence times in the ancestral lineages by analogous exponential random variables from large mean-field populations. This method goes back to \cite{Cox}. Moreover, for the voter density processes, the relevant coalescence times can be reduced to  the  meeting times for two independent copies of the stationary Markov chains over the populations. The almost exponentiality of hitting times \cite{Aldous:AE, AB,AF:MC} applies to these times and leads to the classical diffusivity in the voter density processes in general spatial populations \cite{CCC,CC}. The first moments of these meeting times are also used to time change the densities for the convergence. 

Besides the methods, the convergence results in \cite[Theorem~4.6]{C:EGT} for the game density processes under the specific setting of large random regular graphs and the payoff matrices for prisoner's dilemma games are closely related to the replicator equation on graphs from \cite{ON}. See \eqref{prisoner}  for these payoff matrices and \cite{C:MT} for the exact asymptotics $N(k-1)/[2(k-2)]$,  $N\to\infty$, of the expected meeting times on the large random $k$-regular graphs. In this case, the diffusion approximations in \cite[SI]{OHLN} hold to the degree of matching constants if the time changes are formally undone \cite{C:MT}. (See also \cite[Remark~3.1]{C:MT} for a correction of inaccuracy in \cite{C:EGT} on passing limits along random regular graphs.) This standpoint extends to a recovery of the replicator equation on graphs from \cite{ON} by a similar formal argument. It shows that these results in \cite{ON,OHLN}, both due to pair approximations, are algebraically consistent with each other. See the end of Section~\ref{sec:mainresults} for details and the second main result discussed below for further comparison. In addition to its own interest, the replicator equation on graphs concerns a unified characterization of the evolutionary game within an enlarged range of selection intensities  as mentioned above. 

The main results of this paper obtain the convergence to the replicator equation under the above specific setting, in addition to extensions to general spatial populations and payoff matrices. Multiple strategies and mutations are allowed. See Theorem~\ref{thm:main} and Corollary~\ref{cor:symmetric}. For the extended context, the first  main result [Theorem~\ref{thm:main} (1$\cc$)] proves the convergence of the vector density processes of strategy under the following assumptions. We require that the stationary distributions associated with the spatial structures are asymptotically comparable to the uniform distributions (see \eqref{cond:pi}), and these spatial structures allow for suitable time changes of the density processes and suitable selection intensities (Definition~\ref{def:admissible}). Here, for the typical eligible populations, the time changes can range in $1\ll\theta \ll N$. The selection intensities are \emph{of the inverse order} so that $1/N\ll w\ll1 $. Then the precise limiting equation is given by \eqref{eq:replicatorext}, with the selection intensity $w$ replaced by a constant $w_\infty$ as a limit of the parameters. The proof also determines $F_\sigma(X)$ and $\widetilde{F}_\sigma(X)$ for \eqref{eq:replicatorext}:
\begin{align}
F_\sigma(X)&=\overline{\kappa}_{0|2|3}\sum_{\sigma'\in S}\Pi(\sigma,\sigma')X_{\sigma'},\label{F1}\\
\begin{split}
\widetilde{F}_\sigma(X)&=(\overline{\kappa}_{(2,3)|0}-\overline{\kappa}_{0|2|3})\Pi(\sigma,\sigma)\\
&\quad +\sum_{\sigma'\in S}(\ok_{(0,3)|2}-\ok_{0|2|3})[\Pi(\sigma,\sigma')-\Pi(\sigma',\sigma)]X_{\sigma'}\\
&\quad -(\overline{\kappa}_{(2,3)|0}-\overline{\kappa}_{0|2|3})\sum_{\sigma'\in S}\Pi(\sigma',\sigma')X_{\sigma'}.\label{F2}
\end{split}
\end{align}
Here, $\overline{\kappa}_{(2,3)|0}$, $\ok_{(0,3)|2}$, and $\overline{\kappa}_{0|2|3}$ are nonnegative constants defined by the asymptotics of some coalescent characteristics of the spatial structures. See  Section~\ref{sec:slow} for the definitions of these constants. 

The first main result [Theorem~\ref{thm:main} (1$\cc$)] has the meaning of spatial universality as the diffusion approximations of the voter model and the evolutionary game in \cite{CCC,CC,C:EGT}, although it does not recover the explicit equations obtained in \cite{ON} on large random regular graphs under general payoff matrices. The conditions do not require convergence of local geometry as in the large discrete tori and large random regular graphs. The spatial structures can remain sparse in the limit, which is in stark contrast to the usual assumptions for proving scaling limits of particle systems. The locally tree-like property usually assumed in pair approximations is not required either. Based on these properties, the first main result  [Theorem~\ref{thm:main} (1$\cc$)] gives an answer in the positive for the conjecture in \cite{ON} to the degree of using constants that may depend implicitly on the space: The approximations of the expected density processes by the replicator equation extend to many non-regular graphs, whenever the initial conditions converge deterministically. 
 
To further the formal comparison mentioned above with the approximate Wright--Fisher diffusion from \cite[SI]{OHLN}, the second main result [Theorem~\ref{thm:main} (2$\cc$)] considers one additional aspect for the convergence of the density processes. In this part, the normalized fluctuations are proven to converge to a vector centered Gaussian martingale [Theorem~\ref{thm:main} (2$\cc$)]. The quadratic covariation is the Wright--Fisher diffusion matrix in the limiting densities $X$: $\int_0^{ t} X_\sigma(s)[\delta_{\sigma,\sigma'}-X_{\sigma'}(s)]\d s$, $\sigma,\sigma'\in S$, where $\delta_{\sigma,\sigma'}$ are the Kronecker deltas. For the case of only two strategies on large random regular graphs, this covariation formally recovers the approximate Wright--Fisher diffusion term from \cite[SI]{OHLN}. Note that this result and the convergence to the replicator equation do not imply the diffusion approximations of the density processes. 

In the rest of this introduction, we explain the proof of the first main result. Its investigation raises all the central technical issues pointed out above. First, the lack of an asymptotic equivalence of probability laws is resolved via the populations' microscopic dynamics driving the density processes. Duhamel's principle replaces the pathwise, global change of measure method in \cite{C:EGT} and shows the irrelevance of selection intensities in the microscopic dynamics (Proposition~\ref{prop:duhamel}). This approach then links  to the decorrelation proven in \cite[Section~4]{CCC} for some ``local'' meeting time distributions, from the ancestral lineages, driving the dynamics of the voter density processes. Here, local meeting times refer to those where the initial conditions of the Markov chains are within fixed numbers of edges. 

The decorrelation property from \cite{CCC} shows that the probability distributions of those particular local meeting times for general populations converge to nontrivial convex combinations of the delta distributions at zero and infinity. The time scales are slower than those for the diffusion approximations.  In particular, the exponential distribution is not just absent. No distributions with a nonzero mass between zero and infinity arise in the limit. Informally speaking, the decorrelation occurs at time scales between the period when details of the spatial structures dominate and the period when the almost exponentiality \cite{Aldous:AE, AB,AF:MC} plays a role. To us, this presence of multiple time scales in the evolutionary dynamics is reminiscent of the slow-fast dynamical system in \cite[SI]{OHLN}.

For the convergence to the replicator equation, the choice of the time changes for the densities and the characterization of the limiting equation use the decorrelation from \cite{CCC} and its extensions. First, the time changes can only grow slower than those for the diffusion approximations since the limiting trajectories are less rougher.  This requirement relates the convergence to the decorrelation. We are now interested in proving the best possible range of growing time changes for the decorrelation, not just using the particular ones from \cite{CCC}. After all, in \cite{ON}, the replicator equation is expected to be present within the broad range $w\ll 1$, and our argument requires the selection intensities to be of the inverse order of the time changes. Moreover, the application of Duhamel's principle mentioned above leads to the entrance of various local meeting times more than those for the voter densities in \cite{CCC}. Simultaneous decorrelation in these local meeting times is essential for getting a deterministic limiting differential equation: This property involves asymptotic path regularity of the density processes. The constant coefficients in \eqref{F1} and \eqref{F2} also arise as the weights at infinity in the limiting local meeting time distributions for the typical eligible populations. See Sections~\ref{sec:slow} and \ref{sec:eqn} for the related proofs.  \medskip   

\noindent {\bf Organization.} Section~\ref{sec:mainresults} introduces the evolutionary game model and the voter model analytically and discusses the main results (Theorem~\ref{thm:main} and Corollary~\ref{cor:symmetric}). In Section~\ref{sec:dynamics}, we define the voter model and the evolutionary game model as semimartingales and briefly explain the role of the coalescing duality. In Section~\ref{sec:slow}, we quantify the time changes in proving the main results and characterize the coefficients of the limiting equation. Section~\ref{sec:eqn} is devoted to the main arguments of the proofs of Theorem~\ref{thm:main} and Corollary~\ref{cor:symmetric}. Finally, Section~\ref{sec:coal} presents some auxiliary results for coalescing Markov chains.\medskip

\noindent {\bf Acknowledgments}
The author would like to thank Lea Popovic for comments on earlier drafts and Sabin Lessard for pointing out several references from the literature. Support from the Simons Foundation before the author's present position and from the Natural Science and Engineering Research Council of Canada is gratefully acknowledged.

\section{Main results}\label{sec:mainresults}
In this section, we introduce the stochastic spatial evolutionary game with death-birth updating in more detail. A discussion of the main results of this paper then follows. To be consistent with the viewpoint of voter model perturbations and the neutral role of the voter model, strategies will be called {\bf types} in the rest of this paper. The settings here and in the next section are adapted from those in \cite{CCC,CC,C:EGT} to the context of evolutionary games with multiple types. 

Recall that a discrete spatial structure considered in this paper is given by an irreducible, reversible probability kernel $q$ on a finite nonempty set $E$ such that $\tr(q)=\sum_{x\in E}q(x,x)=0$. Write $N=\#E$ and $\pi$ for the unique stationary distribution of $q$. The interactions of individuals are  defined by a payoff matrix $\Pi=(\Pi(\sigma,\sigma'))_{\sigma,\sigma'\in S}$ of real entries. Fix $\overline{w}\in (0,\infty)$ such that
\begin{align}\label{def:wbar}
w+w\sum_{z\in E}q(y,z)|\Pi\big(\xi(y),\xi(z)\big)|<1,\quad \forall\;w\in [0,\overline{w}],\;y\in E.
\end{align}
Then the following perturbed transition probability is used to update types of individuals due to interactions:
\begin{align}
q^w(x,y,\xi)&\defeq  \frac{q(x,y)\left[(1-w)+w\sum_{z\in E}q(y,z)\Pi\big(\xi(y),\xi(z)\big)\right]}{\sum_{y'\in E}q(x,y')\left[(1-w)+w\sum_{z\in E}q(y',z)\Pi\big(\xi(y'),\xi(z)\big)\right]}.\label{def:qw}
\end{align}
With these updates and the updates based on a mutation measure $\mu$ on $S$, two types of configurations $\xi^{x,y},\xi^{x|\sigma}\in S^E$ result. They are obtained from $\xi\in S^E$ by changing only the type at $x$ such that  $\xi^{x,y}(x)=\xi(y)$ and $\xi^{x|\sigma}(x)=\sigma$. Hence, the evolutionary game $(\xi_t)$ is a Markov jump process with a generator given by
\begin{align}\label{def:Lw}
\begin{split}
\mathsf L^{w} H(\xi)=&\sum_{x,y\in E}q^w(x,y,\xi)[H(\xi^{x,y})-H(\xi)]\\
&+\sum_{x\in E}\int_S [H(\xi^{x|\sigma})-H(\xi)]\d \mu(\sigma),\quad H:S\to \R.
\end{split}
\end{align}
The first sum on the right-hand side of \eqref{def:Lw} governs changes of types due to selection, and the second sum is responsible for mutations. Given $\xi\in S^E$ and a probability distribution $\nu$ on $S^E$ as initial conditions, we write $\P^w_\xi$ and $\E^w_\xi$, or $\P^w_\nu$ and $\E^w_\nu$, under the laws associated  with $\mathsf L^w$. For $w=0$, the generator $\mathsf L^{w}$ is reduced to the generator  $\mathsf L$ of the multi-type voter model with mutation, and the notation $\P$ and $\E$ is used.

The object in this paper is the vector density processes $p(\xi_t)=(p_\sigma(\xi_t);\sigma\in S)$ for the evolutionary game with death-birth updating. Here, the density function of $\sigma\in S$ is given by
\begin{align}\label{def:density}
p_\sigma(\xi)=\sum_{x\in E}\1_\sigma\circ\xi(x)\pi(x),
\end{align}
where $f\circ \xi(x)=f(\xi(x))$. Under $\P^w$, $p_\sigma(\xi_t)$ admits a semimartingale decomosition:
\begin{align}\label{density:dynamics}
p_\sigma(\xi_t)=p_\sigma(\xi_0)+A_\sigma(t)+M_\sigma(t),
\end{align}
where $A_\sigma(t)=\int_0^t \mathsf L^w p_\sigma(\xi_s)\d s $. In the sequel, we study the convergence of the vector density processes and the martingales $M_\sigma$ separately, along an appropriate sequence of discrete spatial structures $(E_n,q^{(n)})$ with $N_n=\#E_n\to\infty$. \medskip

\noindent {\bf Convention for superscripts and subscripts.} Objects associated with $(E_n,q^{(n)})$ will carry either superscripts ``$(n)$'' or subscripts ``$n$'', although additional properties may be assumed so that these objects are not just based on $(E_n,q^{(n)})$. Otherwise, we refer to a fixed spatial structure $(E,q)$. \hfill $\blacksquare$\medskip

For the main theorem, we choose parameters as time changes for the density processes, mutation measures, and selection intensities. The choice is according to the underlying discrete spatial structures. We use $\nu_n(\1)=\sum_{x\in E_n}\pi^{(n)}(x)^2$ and the first moment $\gamma_n$ of the first meeting time of two independent stationary rate-$1$ $q^{(n)}$-Markov chains. The other characteristic of the spatial structure considers the mixing time $\mathbf t^{(n)}_{\rm mix}$ of the $q^{(n)}$-Markov chains  and the spectral gap $\mathbf g_n$ as follows. Recall that the semigroup of the continuous-time rate-$1$ $q^{(n)}$-Markov chain is given by $(\e^{t(q^{(n)}-1)};t\geq 0)$. With
\begin{align}\label{def:dE}
d_{E_n}(t)=\max_{x\in E_n}\big\|\e^{t(q^{(n)}-1)}(x,\cdot)-\pi^{(n)}\big\|_{\rm TV}
\end{align}
for $\|\cdot\|_{\rm TV}$ denoting the total variation distance, we choose
\begin{align}
\label{def:tmix}
\mathbf t^{(n)}_{\rm mix}=\inf\{t\geq 0;d_{E_n}(t)\leq (2\e)^{-1}\}.
\end{align}
The spectral gap $\mathbf g_n$ is the distance between the largest and second largest eigenvalues of $q^{(n)}$.

\begin{defi}\label{def:admissible}
For all $n\geq 1$,  let $\theta_n\in (0,\infty)$ be a time change,  $\mu_n$  a mutation measure on $S$, and $w_n\in [0,\overline{w}]$.  The sequence $(\theta_n,\mu_n,w_n)$ is said to be {\bf admissible} if all of the following conditions hold. First, $(\theta_n)$ satisfies
\begin{align}\label{cond1:thetan}
\lim_{n\to\infty}\theta_n=\infty,\quad \lim_{n\to\infty}\frac{\theta_n}{\gamma_n}<\infty,\quad 
\lim_{n\to\infty}\gamma_n\nu_n(\1)\e^{-t\theta_n}= 0,\quad \forall\;t\in (0,\infty),
\end{align}
and at least one of the two mixing conditions holds:  
\begin{align}\label{cond2:thetan}
\lim_{n\to\infty}\gamma_n\nu_n(\1)\e^{-\mathbf g_n \theta_n}=0\quad\mbox{or}\quad \lim_{n\to\infty}\frac{\mathbf t^{(n)}_{\rm mix}}{\theta_n}[1+\log^+(\gamma_n/\mathbf t^{(n)}_{\rm mix})]=0,
\end{align}
where $\log^+\alpha=\log (\max\{\alpha,1\})$. Second, we require the following limits for $(\mu_n)$ and $(w_n)$: 
\begin{align}
& \lim_{n\to\infty} \mu_n(\sigma)\theta_n=\mu_\infty(\sigma)<\infty,\quad \forall\;\sigma\in S;\label{def:mun}\\
& \lim_{n\to\infty}\ w_n=0,\quad 
\lim_{n\to\infty}\frac{w_n\theta_n }{2\gamma_n\nu_n(\1)}=w_\infty<\infty,\quad \limsup_{n\to\infty}w_n\theta_n<\infty.
\label{def:wn}
\end{align}
\end{defi}

Another condition of the main theorem requires that $\sup_nN_n\max_{x\in E_n}\pi^{(n)}(x)<\infty$, which implies $\gamma_n\geq \mathcal O(N_n)$ (see \eqref{ergodic} or \cite[(3.21)]{CCC} for details). In this context, the admissible $\theta_n$ has the following effects. If $\lim_{n}\theta_n/\gamma_n\in (0,\infty)$, the time-changed density processes $p_1(\xi_{\theta_nt})$  of the voter model converge to the Wright--Fisher diffusion \cite{CCC,CC}. Moreover, the density processes of the evolutionary game converge to the same diffusion but with a drift \cite[Theorem~4.6]{C:EGT}. These diffusion approximations hold under the mixing conditions slightly different  from those in \eqref{cond2:thetan}. Therefore, assuming $\lim_{n}\theta_n/\gamma_n=0$ in \eqref{cond1:sn} has the heuristic that the time-changed density processes have paths  less rougher in the limit, and so, do not converge to diffusion processes. Note that this variation of time scales can be contrasted with, e.g., the context considered in \cite{EN:80} where, among other results,  the discrete processes converge to the equilibrium states of the limiting process due to faster time changes.

The other conditions for the admissible sequences mainly consider the typical case of ``transient'' spatial structures. The kernels are characterized by the condition $\sup_n \gamma_n\nu_n(\1)<\infty$ \cite[Remark~2.4]{CCC}. In this case, \eqref{cond1:thetan} can be satisfied by any sequence $(\theta_n)$ such that $1\ll \theta_n\ll N_n$, and \eqref{def:mun} and \eqref{def:wn} allow for nonzero $\mu_\infty$ and $w_\infty$. The somewhat tedious condition in \eqref{def:wn} simplifies drastically, and we get $N_n^{-1}\ll w_n\ll 1$ when $\lim_n w_n\theta_n$ is nonzero. As for the mixing conditions in \eqref{cond2:thetan}, they can pose severe limitations if the spatial structures are ``recurrent'' ($\sup_n \gamma_n\nu_n(\1)=\infty$). In this case, we may not be able to find admissible sequences such that $w_\infty>0$, so that the limiting equation to be presented below only allows constant solutions in the absence of mutation.  
For example, the two-dimensional discrete tori satisfy $\gamma_n\nu_n(\1)\sim C\log N_n$, $\mathbf t^{(n)}_{\rm mix}\leq \mathcal O(N_n)$ and $\mathbf g_n=\mathcal O(1/N_n)$. See  \cite{Cox} and \cite[Theorem~10.13 on p.133, Theorem~5.5 on p.66 and Section~12.3.1 on p.157]{LPW}.  We can choose $\theta_n=N_n(\log\log N_n)^2$ to satisfy \eqref{cond1:thetan} with $\lim_n\theta_n/\gamma_n=0$, and the first mixing condition in \eqref{cond2:thetan}. But now the admissible $(w_n)$ only gives $w_\infty=0$. We notice that a similar restriction is pointed out in \cite{Cox:Feller} on the low density scaling limits of the biased voter model, where the limit is Feller's branching diffusion with drift. 

From now on, we write  $\pi_{\min}=\min_{x\in E}\pi(x)$ and  $\pi_{\max}=\max_{x\in E}\pi(x)$ for the stationary distribution $\pi$ of $(E,q)$. The main theorem stated below shows a law of large numbers type convergence for the density processes and a central limit theorem type convergence for the fluctuations. These two results do not combine to give the diffusion approximation of the density processes proven in \cite{C:EGT}.

\begin{thm}\label{thm:main}
Let $(E_n,q^{(n)})$ be a sequence of irreducible, reversible probability kernels defined on finite sets with $N_n=\#E_n\to\infty$. Assume the following conditions:
\begin{enumerate}
\item [\rm (a)] Let $\nu_n$ be a probability measure on $S^{E_n}$ such that $\nu_n(\xi;p(\xi)\in \cdot)$ converges in distribution to a probability measure $\overline{\nu}_\infty$ on $[0,1]^S$.
\item [\rm (b)] It holds that 
\begin{align}\label{cond:pi}
0<\liminf_{n\to\infty}N_n\pi^{(n)}_{\min}\leq \limsup_{n\to\infty}N_n\pi^{(n)}_{\max}<\infty.
\end{align}
\item [\rm (c)] The limits in \eqref{def:|kell1|ell2} and \eqref{def:|||} defining the nonnegative constants $\overline{\kappa}_{(2,3)|0}$, $\overline{\kappa}_{(0,3)|2}$ and $\ok_{0|2|3}$ exist. These constants depend only on space. 
\item [\rm (d)] We can choose an admissible sequence $(\theta_n,\mu_n,w_n)$ as in Definition~\ref{def:admissible} such that $\lim_n\theta_n/\gamma_n=0$.
\end{enumerate}
Then the following convergence in distribution of processes holds:
\begin{enumerate}
\item [\rm (1$\cc$)] The sequence of the vector density processes $\big(p(\xi_{\theta_nt}),\P^{w_n}_{\nu_n}\big)$ converges to the solution $X$ of the following differential  equation with the random initial condition $\P(X_0\in \cdot)=\overline{\nu}_\infty$:
\begin{align}
\begin{split}\label{p1:lim}
\dot{X}_\sigma&=w_\infty X_\sigma\left(F_\sigma(X)+\widetilde{F}_\sigma(X)-\sum_{\sigma''\in S}F_{\sigma''}(X)X_{\sigma''}\right)\\
&\quad\; +\mu_\infty(\sigma)(1-X_\sigma)-\mu_\infty(S\setminus\{\sigma\}) X_\sigma ,\quad \sigma\in S,
\end{split}
\end{align}
where $F_\sigma(X)$ and $\widetilde{F}_\sigma(X)$ are linear functions in $X$ defined by \eqref{F1} and \eqref{F2}. 
Moreover, the sum of the $\ok$-constants in $F_\sigma(X)$ and $\widetilde{F}_\sigma(X)$ is nontrivial to the following degree: 
\begin{align}\label{kappa:>}
(\overline{\kappa}_{(2,3)|0}-\overline{\kappa}_{0|2|3})+ \overline{\kappa}_{0|2|3}+(\ok_{(0,3)|2}-\ok_{0|2|3})\in (0,\infty).
\end{align}

\item [\rm (2$\cc$)] Recall the vector martingale defined by \eqref{density:dynamics}, and set $M^{(n)}_\sigma(t)=(M_\sigma(\theta_nt);\sigma\in S)$ under $\P^{w_n}_{\nu_n}$. If, moreover, $\lim_n \gamma_n\nu_n(\1)/\theta_n=0$ holds, then $(\gamma_n/\theta_n)^{1/2}M^{(n)}$ converges to a vector centered Gaussian martingale with quadratic covariation $(\int_0^tX_\sigma(s)[\delta_{\sigma,\sigma'}-X_{\sigma'}(s)]\d s;\sigma,\sigma'\in S) $. 
\end{enumerate}
\end{thm}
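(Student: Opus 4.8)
The plan is to treat both statements through the semimartingale decomposition \eqref{density:dynamics} after the time change $t\mapsto\theta_n t$, reducing (1$\cc$) to the identification of the limiting drift and (2$\cc$) to the identification of the limiting quadratic covariation. Writing $A_\sigma^{(n)}(t)=\theta_n\int_0^t\mathsf L^{w_n}p_\sigma(\xi_{\theta_n s})\,\d s$ and $M_\sigma^{(n)}(t)=M_\sigma(\theta_n t)$, I would first observe that the neutral (voter) part of $\mathsf L^{w_n}$ annihilates $p_\sigma$ by the reversibility of $q^{(n)}$, so that only the $O(w_n)$ selection part and the mutation part survive in the drift. Proposition~\ref{prop:duhamel} (Duhamel's principle) is then used to linearize the selection kernel $q^{w_n}$ of \eqref{def:qw} in $w_n$ and to replace expectations under $\P^{w_n}_{\nu_n}$ by expectations under the voter law, the error being of relative order $w_n$; since $\limsup_n w_n\theta_n<\infty$ and $w_n\to 0$ by \eqref{def:wn}, the $O(w_n^2)$ contribution to $\theta_n\mathsf L^{w_n}p_\sigma$ is negligible. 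This is the step realizing the ``irrelevance of selection intensities in the microscopic dynamics.''

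After linearization, the selection drift $\theta_n\mathsf L^{w_n}_{\rm sel}p_\sigma$ is expressed, with the prefactor $w_n\theta_n$ regulated by \eqref{def:wn}, as a weighted sum of three-point correlations $\E[\1_{\sigma_1}\!\circ\xi(x)\,\1_{\sigma_2}\!\circ\xi(y)\,\1_{\sigma_3}\!\circ\xi(z)]$ indexed by the triples $(x,y,z)$ that run along the two-step interaction path encoded in \eqref{def:qw}. I would evaluate these correlations by the coalescing duality for the voter model, which turns them into meeting and coalescence events of the dual coalescing random walks started from $x,y,z$, organized by their coalescence partition. The decorrelation estimates of \cite[Section~4]{CCC}, together with their simultaneous extension to all the relevant local geometries (Sections~\ref{sec:slow} and~\ref{sec:eqn}), show that at the scale $\theta_n$ with $1\ll\theta_n\ll\gamma_n$ each such local meeting-time law converges to a mixture of $\delta_0$ and $\delta_\infty$, whose masses at $\infty$ are exactly the constants $\ok_{(2,3)|0}$, $\ok_{(0,3)|2}$, and $\ok_{0|2|3}$. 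Consequently the three-point correlations factor through the current densities $X$, and substituting them together with the prefactor and collapsing the resulting polynomials via $\sum_{\sigma}\1_\sigma\equiv 1$ produces precisely the coefficient $w_\infty$ multiplying the bracket built from $F_\sigma$ and $\widetilde F_\sigma$ of \eqref{F1}--\eqref{F2}; the mutation part of $\mathsf L^{w_n}$ contributes the affine term $\mu_\infty(\sigma)(1-X_\sigma)-\mu_\infty(S\setminus\{\sigma\})X_\sigma$ through \eqref{def:mun}. The strict positivity \eqref{kappa:>} follows from the same coalescent representation, as the three limiting weights cannot simultaneously vanish.

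To complete (1$\cc$) I would prove tightness of $\big(p(\xi_{\theta_n\cdot})\big)$ by Aldous's criterion, the drift being uniformly bounded and the predictable quadratic variation of $M^{(n)}_\sigma$ being of order $\theta_n/\gamma_n\to 0$ (using $\lim_n\theta_n/\gamma_n=0$ from hypothesis (d), together with the meeting-time normalization of the voter quadratic variation from \cite{CCC}). Hence every limit point is continuous with vanishing martingale part and solves \eqref{p1:lim}; since the right-hand side is a polynomial, hence locally Lipschitz, vector field on $[0,1]^S$, the solution with prescribed initial law $\overline\nu_\infty$ is unique, and the full sequence converges. For (2$\cc$) I keep $M^{(n)}_\sigma$ and compute its predictable covariation by the two-point coalescing duality: the decorrelated pair correlations give a covariation rate whose leading term is $\gamma_n^{-1}X_\sigma(s)[\delta_{\sigma,\sigma'}-X_{\sigma'}(s)]$, so that normalizing by $(\gamma_n/\theta_n)^{1/2}$ yields the Wright--Fisher covariation $\int_0^tX_\sigma[\delta_{\sigma,\sigma'}-X_{\sigma'}]\,\d s$. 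The asymptotic Gaussianity then follows from the martingale central limit theorem, whose conditional Lindeberg condition holds because the jumps of $(\gamma_n/\theta_n)^{1/2}M^{(n)}_\sigma$ are of order $(\gamma_n/\theta_n)^{1/2}\pi^{(n)}_{\max}\to 0$ under \eqref{cond:pi} and the extra hypothesis $\gamma_n\nu_n(\1)/\theta_n\to 0$.

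I expect the main obstacle to be the simultaneous decorrelation of the several distinct families of local meeting times that enter through Duhamel's expansion --- more numerous than those appearing for the pure voter density in \cite{CCC} --- and the attendant asymptotic path regularity of $p(\xi_{\theta_n\cdot})$ needed to pass from the random, configuration-dependent local correlations to the deterministic coefficients of \eqref{p1:lim}. Controlling these jointly, and over the best-possible range $1\ll\theta_n\ll\gamma_n$ rather than the single scale of \cite{CCC}, is what forces the delicate coalescent estimates to be collected in Section~\ref{sec:coal}.
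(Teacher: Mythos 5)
Your proposal is correct and follows essentially the same route as the paper: the decomposition \eqref{psigma:dec} after the time change, the Duhamel comparison with the voter law (Proposition~\ref{prop:duhamel}) making the $O(w_n^2)$ remainder negligible, coalescing duality plus simultaneous decorrelation of the local meeting times (Propositions~\ref{prop:sn-selection} and~\ref{prop:kell}, Lemma~\ref{lem:RW}) to close the drift as $w_\infty Q_\sigma(p)$, Aldous tightness, and the martingale CLT with the jump bound $(\gamma_n/\theta_n)^{1/2}\pi^{(n)}_{\max}\to 0$ for (2$\cc$). The only point you leave vague is \eqref{kappa:>}: in the paper this is not just that the three nonnegative weights ``cannot simultaneously vanish,'' but that the sum telescopes, via the inclusion--exclusion identity of Proposition~\ref{prop:kell} (4$\cc$), to $\ok_{(2,3)|0}+\ok_{(0,3)|2}-\ok_{0|2|3}=\ok_2=\ok_1\in(0,\infty)$, with $\ok_1>0$ coming from \eqref{ineq:MUVcompare} and $\ok_0=1$.
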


We present the proof of Theorem~\ref{thm:main} in Section~\ref{sec:eqn}. The existence of the limits in condition (c) is proven in Proposition~\ref{prop:kell}. See Lemma~\ref{lem:tight} for the additional condition in  Theorem~\ref{thm:main} (2$\cc$).

To illustrate Theorem~\ref{thm:main}, we consider the generalized  prisoner's dilemma matrix in the rest of this section. The matrix is for games among individuals of two types : 
\begin{align}\label{prisoner}
\Pi=
\bordermatrix{~ & 1& 0 \cr
              1 & b-c & -c \cr
              0 & b & 0 \cr}
\end{align}
for real entries $b,c$. (The usual prisoner's dilemma matrix requires $b>c>0$.) The proof of the following corollary also appears in Section~\ref{sec:eqn}.

\begin{cor}\label{cor:symmetric}
Let conditions {\rm (a)--(d)} of Theorem~\ref{thm:main} be in force and $\Pi$ be given by \eqref{prisoner}. If, moreover, $q^{(n)}$ are symmetric ($q^{(n)}(x,y)\equiv q^{(n)}(y,x)$) and 
\begin{align}\label{cond:q2}
\lim_{n\to\infty}\gamma_n\nu_n(\1)\pi^{(n)}\{x\in E_n;q^{(n),2}(x,x)\neq q^{(\infty),2}\}=0
\end{align}
for some constant $q^{(\infty),2}$, then the differential equation for $X_1=1-X_0$ takes a simpler form:
\begin{align}\label{eq:X1}
\dot{X}_1=w_\infty(bq^{(\infty),2}-c)X_1(1-X_1)+\mu_\infty(1)(1-X_1)-\mu_\infty(0) X_1.
\end{align}
\end{cor}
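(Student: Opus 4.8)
The plan is to derive \eqref{eq:X1} as a direct specialization of Theorem~\ref{thm:main}\,(1$\cc$): substitute $\sigma=1$ into the limiting equation \eqref{p1:lim}, exploit that two types force $X_0=1-X_1$, and simplify the selection term using the explicit entries of $\Pi$ in \eqref{prisoner}; the remaining—and only substantial—task is to evaluate the three coalescent constants under the symmetry of $q^{(n)}$ and \eqref{cond:q2}. Since for two types $p_1(\xi)+p_0(\xi)=\sum_{x}\pi(x)=1$, the limit satisfies $X_0=1-X_1$, so it suffices to keep the equation for $\sigma=1$; its mutation part is $\mu_\infty(1)(1-X_1)-\mu_\infty(S\setminus\{1\})X_1=\mu_\infty(1)(1-X_1)-\mu_\infty(0)X_1$, already in the form of \eqref{eq:X1}. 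Everything thus reduces to identifying the selection coefficient.

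For the algebraic reduction I would insert $\Pi(1,1)=b-c$, $\Pi(1,0)=-c$, $\Pi(0,1)=b$, $\Pi(0,0)=0$ into \eqref{F1} and \eqref{F2}. With $X_0=1-X_1$, the payoff $F_1(X)$ net of the mean payoff $\sum_{\sigma''}F_{\sigma''}(X)X_{\sigma''}$ collapses to $-\ok_{0|2|3}\,c\,(1-X_1)$, while \eqref{F2} gives $\widetilde F_1(X)=(1-X_1)\big[(\ok_{(2,3)|0}-\ok_{0|2|3})(b-c)-(\ok_{(0,3)|2}-\ok_{0|2|3})(b+c)\big]$. Grouping the coefficients of $b$ and of $c$, the selection term of \eqref{p1:lim} becomes
\begin{align*}
w_\infty X_1(1-X_1)\big[b(\ok_{(2,3)|0}-\ok_{(0,3)|2})-c(\ok_{(2,3)|0}+\ok_{(0,3)|2}-\ok_{0|2|3})\big].
\end{align*}
As a consistency check, the $c$-coefficient here is exactly the quantity asserted to be positive in \eqref{kappa:>}. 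This step is routine and I would present it tersely.

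Comparing with \eqref{eq:X1}, it remains to establish the two identities
\begin{align*}
\ok_{(2,3)|0}-\ok_{(0,3)|2}=q^{(\infty),2},\qquad \ok_{(2,3)|0}+\ok_{(0,3)|2}-\ok_{0|2|3}=1,
\end{align*}
and this is the heart of the corollary. Here I would return to the definitions \eqref{def:|kell1|ell2} and \eqref{def:|||} of the constants in Section~\ref{sec:slow}. Symmetry $q^{(n)}(x,y)\equiv q^{(n)}(y,x)$ renders the stationary chain symmetric, so the two-step return probability is $q^{(n),2}(x,x)=\sum_{y}q^{(n)}(x,y)^2$, i.e.\ the chance that two lineages launched from neighbors of a common site coincide after one step; \eqref{cond:q2} says precisely that $q^{(n),2}(x,x)$ equals the constant $q^{(\infty),2}$ outside a set of sites whose $\pi^{(n)}$-measure is negligible on the coalescent scale $\gamma_n\nu_n(\1)$, which is what pins the difference $\ok_{(2,3)|0}-\ok_{(0,3)|2}$ to $q^{(\infty),2}$. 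The normalization to $1$ of the second combination should reflect that, under reversibility, the grouped local meeting-time patterns $(2,3)|0$ and $(0,3)|2$ net of the fully separated pattern $0|2|3$ carry all of the nondegenerate weight at infinity (whose positivity is \eqref{kappa:>}). I expect this evaluation of the constants—disentangling the patterns $(2,3)|0$, $(0,3)|2$ and $0|2|3$ and matching their difference to the two-step return probability—to be the main obstacle, since it is the only place where \eqref{cond:q2} and symmetry are genuinely used rather than formal algebra. Once the two identities hold, the selection coefficient is $bq^{(\infty),2}-c$ and \eqref{eq:X1} follows.
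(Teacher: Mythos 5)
Your reduction is sound and, up to the point where you stop, it coincides with the paper's own argument: the paper likewise specializes the limiting equation to \eqref{prisoner}, grouping $Q_1=(b-c)Q_{0,11}-cQ_{0,10}-bQ_{1,01}=(Q_{0,11}-Q_{1,01})b-(Q_{0,11}+Q_{0,10})c$, and your selection coefficient $b(\ok_{(2,3)|0}-\ok_{(0,3)|2})-c(\ok_{(2,3)|0}+\ok_{(0,3)|2}-\ok_{0|2|3})$ agrees with that computation (your consistency check against \eqref{kappa:>} is also correct). The genuine gap is that the two identities you rightly call the heart of the corollary are left unproven and flagged as a ``main obstacle'' requiring a fresh disentangling of the coalescent patterns. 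No fresh analysis is needed: they are immediate from Proposition~\ref{prop:kell}, already proved in Section~\ref{sec:slow}. Part (4$\cc$) with $(\ell_0,\ell_1,\ell_2)=(0,3,2)$ gives $\ok_{(2,3)|0}+\ok_{(0,3)|2}-\ok_{0|2|3}=\ok_2$; applying it with $(0,2,3)$ and with $(2,0,3)$, i.e. through the auxiliary constant $\ok_{(0,2)|3}$, gives $\ok_{(2,3)|0}+\ok_{(0,2)|3}-\ok_{0|2|3}=\ok_3$ and $\ok_{(0,3)|2}+\ok_{(0,2)|3}-\ok_{0|2|3}=\ok_1$, whence $\ok_{(2,3)|0}-\ok_{(0,3)|2}=\ok_3-\ok_1$. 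Then part (2$\cc$) gives $\ok_2=\ok_1$, and part (3$\cc$) gives $\ok_3=(1+q^{(\infty),2})\ok_1$; the latter is the only place \eqref{cond:q2} acts, via the term $\pi(x)q^{2}(x,x)q(x,y)$ in the recursion of Lemma~\ref{lem:MT}, not directly on the difference as your heuristic suggests.

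Your account of where symmetry enters is also off target, and this matters quantitatively. Its genuine role is not a statement about reversibility carrying ``all of the nondegenerate weight at infinity''; it is the exact normalization $\ok_1=1$. For symmetric $q^{(n)}$ the stationary distribution is uniform, so by \eqref{def:VV'} the pair $(V,V')$ has precisely the law of $(U_0,U_1)$; hence $\P^{(n)}(M_{U_0,U_1}>s_nt)=\P^{(n)}(M_{V,V'}>s_nt)$ and $\ok_1=\ok_0=1$ by Proposition~\ref{prop:sn-selection} (for general kernels \eqref{ineq:MUVcompare} yields only two-sided comparability, not equality). Without this step your bracket evaluates to $(bq^{(\infty),2}-c)\ok_1$ rather than $bq^{(\infty),2}-c$, so the clean constants $q^{(\infty),2}$ and $1$ in your two target identities cannot be obtained. (Two minor points the paper also attends to: the subsequential existence of the auxiliary constant $\ok_{(0,2)|3}$ is supplied by the tightness argument of Proposition~\ref{prop:kell} (1$\cc$), and since the final coefficient depends only on $\ok_1,\ok_2,\ok_3$, which are determined independently of the subsequence, the convergence holds along the whole sequence.)
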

\medskip

Corollary~\ref{cor:symmetric} applies to large random $k$-regular graph for a fixed integer $k\geq 3$, with $q^{(\infty),2}=1/k$ and $\gamma_n/N_n\to (k-1)/[2(k-2)]$ (see \eqref{MUU:RG} and the discussion there). Additionally, $(\theta_n)$ can be chosen to be any sequence such that $1\ll \theta_n\ll N_n$, and $(w_n)$ can be any such that $(w_n\theta_n)$ converges in $[0,\infty)$. See \cite{C:MT} and Section~\ref{sec:rrg}. (More precisely, the application needs to pass limits along subsequences, since these graphs are randomly chosen.) Assume the absence of mutation. Then in this case, one can \emph{formally} recover the replicator equation \eqref{eq:X1} from the drift term of the approximate Wright--Fisher diffusion in \cite[SI]{OHLN} as follows. For the density process $p_1(\xi_t)$ under $\P^{w_n}$, that drift term reads
\begin{align}\label{drift}
w_n\cdot \frac{(k-2) (b-ck)}{k(k-1)}p_1(\xi_t)[1-p_1(\xi_t)].
\end{align}
Note that $\gamma_n\approx N_n(k-1)/[2(k-2)]$ as mentioned  above and the choice in \eqref{def:wn} of $w_n$ gives $w_n \approx w_\infty 2\gamma_nN_n^{-1}/\theta_n$. By using these approximations and multiplying the foregoing drift term by $\theta_n$ as a time change,  we get the approximate drift $w_\infty (b/k-c)p_1(\xi_{\theta_nt})[1-p_1(\xi_{\theta_nt})]$ of $p_1(\xi_{\theta_nt})$. This approximation recovers \eqref{eq:X1}. The same formal argument can be used to recover the noise coefficient in Theorem~\ref{thm:main} (2$\cc$). See also \cite[Remark~4.10]{C:EGT} for the case of diffusion approximations.

\section{Semimartingale dynamics}\label{sec:dynamics}
In this section, we define the voter model and the evolutionary game model as solutions to stochastic integral equations driven by point processes. Then we view these equations in terms of semimartingales and identify some leading order terms for  the forthcoming perturbation argument. We recall the coalescing duality for the voter model briefly at the end of this section.

First, given a triplet $(E,q,\mu)$, an equivalent characterization of the corresponding voter model is given as follows. Introduce independent $(\F_t)$-Poisson processes $\{\Lambda(x,y);x,y\in E\}$ and $\{\Lambda^\sigma_t(x);\sigma\in S,x\in E\}$ such that
\begin{align}
\begin{split}\label{rates}
\Lambda_t(x,y)& \quad\mbox{with rate}\quad  \E[\Lambda_1(x,y)]=q(x,y)\quad\mbox{and}\\
\Lambda^\sigma_t(x)&\quad \mbox{with rate}\quad \E[\Lambda^\sigma_1(x)]=\mu(\sigma),\quad x,y\in E,\;\sigma\in S.
 \end{split}
\end{align}
These jump processes are defined on a complete filtered probability space $\big(\Omega,\F,(\F_t),\P\big)$. Then given an initial condition $\xi_0\in S^E$, the $(E,q,\mu)$-voter model can be defined as the pathwise unique $S^E$-valued solution of the following stochastic integral equations \cite{CDP,MT}: for $x\in E$ and $\sigma\in S$, 
\begin{align}
\begin{split}
\1_\sigma\circ \xi_t(x)&=\1_\sigma\circ \xi_0(x)+\sum_{y\in E}\int_0^t [\1_\sigma\circ \xi_{s-}(y)-\1_\sigma\circ \xi_{s-}(x)]\d \Lambda_s(x,y)\\
&\eqspace+\int_0^t \1_{\sigma_{S\setminus\{\sigma\}}}\circ \xi_{s-}(x)\d \Lambda^\sigma_s(x)-\sum_{\sigma'\in S\setminus\{\sigma\}}\int_0^t\1_\sigma\circ \xi_{s-}(x)\d \Lambda^{\sigma'}_s(x).
\label{eq:voter}
\end{split}
\end{align}
Hence, the type at $x$ is replaced and changed to the type at $y$
when $\Lambda(x,y)$ jumps, 
and the type seen at $x$ is $\sigma$ right after $\Lambda^\sigma(x)$ jumps.

Recall that the rates of the evolutionary game are defined by \eqref{def:qw}. With the choice of $\overline{w}$ from \eqref{def:wbar}, $q^w(x,y,\xi)>0$ if and only if $q(x,y)>0$. Hence, Girsanov's theorem for point processes \cite[Section~III.3]{JS} can be applied to change the intensities of the Poisson processes $\Lambda(x,y)$ to $q^w(x,y,\xi)$ such that under a probability measure $\P^w$ equivalent to $\P$ on $\F_t$ for all $t\geq 0$, 
\begin{align}\label{mg:hat}
\widehat{\Lambda}_t(x,y)\defeq\Lambda_t(x,y)-\int_0^t q^w(x,y,\xi_s)\d s\quad\&\quad \widehat{\Lambda}_t^\sigma(x)\defeq\Lambda^\sigma_t(x)-\mu(\sigma)t
\end{align}
are $(\F_t,\P^w)$-martingales. See \cite[Section~2]{C:EGT} for the explicit form of $D^w$ when $S=\{0,1\}$. Since all of $\widehat{\Lambda}(x,y)$ and $\widehat{\Lambda}^\sigma(x)$ do not jump simultaneously under $\P^w$ by the absolute continuity with respect to $\P$, the product of any distinct two of them has a zero predictable quadratic variation \cite[Theorem~4.2, Proposition~4.50, and Theorem~4.52 in Chapter~I]{JS}.

The point processes defined above now allows for straightforward representations of the dynamics of the density processes. By (\ref{eq:voter}), 
\begin{align}
\begin{split}
p_\sigma(\xi_t)
&=p_\sigma(\xi_0)+\sum_{x,y\in E}\pi(x)\int_0^t \big[\1_{\sigma}\circ\xi_{s-}(y)-\1_\sigma\circ\xi_{s-}(x)\big]\d\Lambda_s(x,y)\\
&\quad +
\sum_{x\in E}\pi(x)\int_0^t \1_{S\setminus\{\sigma\}}\circ\xi_{s-}(x)\d \Lambda^\sigma_s(x)\\
&\quad -\sum_{\sigma'\in S\setminus\{\sigma\}}\sum_{x\in E}\pi(x)\int_0^t \1_{\sigma}\circ\xi_{s-}(x)\d \Lambda^{\sigma'}_s(x).\label{dynamics:p1}
\end{split}
\end{align}
To obtain the limiting semimartingale for the density processes, we use the foregoing equation to derive the explicit semimartingale decompositions of the density processes. 

To obtain these explicit decompositions, first, note that the dynamics of $p_\sigma(\xi_t)$ under $\P^w$ relies on various kinds of frequencies and densities as follows. For all $x\in E,\xi\in S^E$ and $\sigma,\sigma_1,\sigma_2\in S$, we set
\begin{align}\label{def:Well}
\begin{split}
f_\sigma(x,\xi)&=\sum_{y\in E}q(x,y)\1_{\sigma}\circ \xi(y),\\
 f_{\sigma_1\sigma_2}(x,\xi)&= \sum_{y\in E}q(x,y)\1_{\sigma_1}(y)\sum_{z\in E}q(y,z)\1_{\sigma_2}\circ\xi(z),\\
f_{\bullet\sigma}(x,\xi)&= \sum_{y\in E}q(x,y)\sum_{z\in E}q(y,z)\1_{\sigma}\circ \xi(z),\quad 
\overline{f}(\xi)=\sum_{x\in E}\pi(x)f(x,\xi).
\end{split}
\end{align} 
To minimize the use of the summation notation, we also express these functions in terms of stationary \emph{discrete-time} $q$-Markov chains $\{U_\ell;\ell\in \Bbb Z_+\}$ and $\{U'_\ell;\ell\in \Bbb Z_+\}$ with $U_0=U_0'$ such that conditioned on $U_0$, the two chains are independent. Additionally, let $(U,U')\sim \pi\otimes \pi$ and $(V,V')$ be distributed as
\begin{align}\label{def:VV'}
 \P(V=x,V'=y)=\frac{\nu(x,y)}{\nu(\1)},\quad x,y\in E,
\end{align}
for $\nu(x,y)=\pi(x)^2q(x,y)$ and $\nu(\1)=\sum_{x,y}\nu(x,y)=\sum_x \pi(x)^2$. (When $q$ is symmetric, $\nu(\1)$ reduces to $N^{-1}$.) For example, $\overline{f_{\sigma_1}f_{\sigma_2\sigma_3}}=\E[\1_{\sigma_1}\circ \xi(U_1')\1_{\sigma_2}\circ \xi(U_1)\1_{\sigma_3}\circ \xi(U_2)]$. We also set
\begin{align}\label{def:p10}
p_{\sigma\sigma'}(\xi)=\E[\1_\sigma\circ \xi(V)\1_{\sigma'}\circ \xi(V')].
\end{align}

Second, we turn to algebraic identities that determine the leading order terms for the forthcoming perturbation arguments.  For $w\in [0,\overline{w}]$, the kernel $q^w$ defined by \eqref{def:qw} can be expanded to the second order in $w$ as follows:
\begin{align}
q^w(x,y,\xi)&=q(x,y)\frac{1-wB(y,\xi)}{1-wA(x,\xi)}\notag\\
&=q(x,y)+\sum_{i=1}^\infty w^iq(x,y)[A(x,\xi)-B(y,\xi)]A(x,\xi)^{i-1}\notag\\
&=
q(x,y)+wq(x,y)[A(x,\xi)-B(y,\xi)]+w^2q(x,y)R^w(x,y,\xi),\label{qwq:exp}
\end{align}
where 
\begin{align*}
A(x,\xi)&=1-\sum_{z\in E}q(x,z)\sum_{z'\in E}q(z,z')\Pi\big(\xi(z),\xi(z')\big),\\
B(y,\xi)&=1-\sum_{z\in E}q(y,z)\Pi\big(\xi(y),\xi(z)\big),
\end{align*}
and $R^w$ is uniform bounded in $w\in [0,\overline{w}],x,y,\xi,(E,q)$.

\begin{lem}\label{lem:D}
For all $\xi\in S^E$ and $\sigma\in S$,
\begin{align}
\overline{D}_\sigma(\xi)&\!\defeq \sum_{x,y\in E}\pi(x)\big[\1_\sigma\circ\xi(y)-\1_\sigma\circ\xi(x)\big]q(x,y)[A(x,\xi)-B(y,\xi)]\label{eq:Dsigma0}\\
&=\sum_{\stackrel{\scriptstyle \sigma_0,\sigma_3\in S}{ \sigma_0\neq\sigma}}\Pi(\sigma,\sigma_3)\overline{f_{\sigma_0}f_{\sigma\sigma_3}}(\xi)-\sum_{\stackrel{\scriptstyle \sigma_2,\sigma_3\in S}{\sigma_2\neq\sigma}}\Pi(\sigma_2,\sigma_3)\overline{f_{\sigma}f_{\sigma_2\sigma_3}}(\xi).\label{eq:Dsigma}
\end{align}
In particular, if $\Pi$ is given by \eqref{prisoner},
then 
\begin{align}\label{eq:Dsigma1}
\overline{D}_1(\xi)=b\overline{f_{1}f_{ \bullet 0}}(\xi)-b\overline{f_{10}}(\xi)-c\overline{f_1f_{0}}(\xi).
\end{align}
\end{lem}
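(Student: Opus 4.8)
The plan is to prove the identity \eqref{eq:Dsigma} by a direct algebraic computation that unfolds the definitions of $A$, $B$, and the frequency functions in \eqref{def:Well}, and then to specialize to the prisoner's dilemma matrix \eqref{prisoner}. First I would substitute the definitions of $A(x,\xi)$ and $B(y,\xi)$ into \eqref{eq:Dsigma0}. The key observation is that the two constant $1$'s in $A$ and $B$ cancel: in the combination $A(x,\xi)-B(y,\xi)$ the leading $1$ from each subtracts out, leaving only the two double-sum payoff terms, namely $-\sum_{z,z'}q(x,z)q(z,z')\Pi(\xi(z),\xi(z')) + \sum_{z}q(y,z)\Pi(\xi(y),\xi(z))$. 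So $\overline{D}_\sigma(\xi)$ splits into two groups of terms according to which of these two pieces is used.

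Next I would exploit the antisymmetric weight $[\1_\sigma\circ\xi(y)-\1_\sigma\circ\xi(x)]$ that multiplies everything. For each of the two payoff pieces I would expand the indicator at $x$ (resp.\ at $y$) using the partition of unity $\sum_{\sigma_0\in S}\1_{\sigma_0}\circ\xi(\cdot)=1$, and recognize the resulting nested sums over $q(x,y)$, $q(y,z)$, $q(z,z')$ weighted by $\pi(x)$ as precisely the averaged frequency functions $\overline{f_{\sigma_0}f_{\sigma\sigma_3}}$ and $\overline{f_{\sigma}f_{\sigma_2\sigma_3}}$ defined in \eqref{def:Well}. The stationarity of $\pi$ under $q$ is what lets me reindex the chains $U_\ell,U'_\ell$ so that the double-step kernel $\sum_z q(x,z)q(z,z')$ matches the two-step structure in $f_{\sigma_1\sigma_2}$. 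The terms where the indicator is $\1_\sigma$ at both the pivot site and the matching summation variable cancel because the weight enforces $\sigma_0\neq\sigma$ in the first sum and $\sigma_2\neq\sigma$ in the second; this is why the diagonal $\sigma_0=\sigma$ and $\sigma_2=\sigma$ contributions drop out and the constraints in \eqref{eq:Dsigma} appear exactly as stated.

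For the specialization \eqref{eq:Dsigma1}, I would plug the matrix \eqref{prisoner} into \eqref{eq:Dsigma} with $S=\{0,1\}$, so each sum collapses to a single nonzero constraint ($\sigma_0=0$ in the first, $\sigma_2=0$ in the second). Reading off the entries $\Pi(1,1)=b-c$, $\Pi(1,0)=-c$, $\Pi(0,1)=b$, $\Pi(0,0)=0$, and regrouping using $\sum_{\sigma_3}\overline{f_0 f_{1\sigma_3}}=\overline{f_0 f_1}$ (the full frequency) together with $\overline{f_1 f_{\bullet 0}}=\sum_{\sigma_2}\overline{f_1 f_{\sigma_2 0}}$, I would arrive at the three-term expression $b\overline{f_1 f_{\bullet 0}}-b\overline{f_{10}}-c\overline{f_1 f_0}$ after consolidating the $b$ and $c$ contributions.

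The main obstacle I anticipate is purely bookkeeping: keeping the summation indices $\sigma_0,\sigma_2,\sigma_3$ aligned with the correct site variables when expanding the two nested payoff structures, since the first payoff piece lives on the two-step chain starting at $x$ while the second lives on the one-step chain at $y$, yet both are weighted by the same antisymmetric indicator difference. Care is needed to verify that the cross terms genuinely telescope and that the $\sigma_0\neq\sigma$ and $\sigma_2\neq\sigma$ restrictions emerge rather than being imposed by hand; I would double-check this by confirming that the omitted $\sigma_0=\sigma$ and $\sigma_2=\sigma$ terms coincide and cancel against each other under the symmetry $\overline{f_\sigma f_{\sigma\sigma_3}}$.
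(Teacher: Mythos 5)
Your proposal is correct and follows essentially the same route as the paper's proof: a direct expansion of \eqref{eq:Dsigma0} in which the constant $1$'s in $A-B$ cancel, the two terms carrying $\1_\sigma\circ\xi(x)$ cancel against each other (since $\sum_y q(x,y)B(y,\xi)$ reproduces the two-step payoff in $A(x,\xi)$), the remaining sums are reindexed via the stationary (reversible) chain into the frequency functions of \eqref{def:Well}, and the restrictions $\sigma_0\neq\sigma$, $\sigma_2\neq\sigma$ arise exactly as you predict — the diagonal terms $\Pi(\sigma,\sigma_3)\overline{f_\sigma f_{\sigma\sigma_3}}$ appear identically in both sums and cancel, which is the paper's partition-of-unity step in \eqref{Dbar:cal1}. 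Your treatment of \eqref{eq:Dsigma1} likewise matches the paper's regrouping via the partition identities (your two listed identities need to be supplemented by the complementation/stationarity step converting $\overline{f_0f_{\bullet 1}}-\overline{f_{01}}$ into $\overline{f_1f_{\bullet 0}}-\overline{f_{10}}$, but this is the routine bookkeeping you already flag).
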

\begin{proof}
By using the reversibility of $q$ and taking $y$ in \eqref{eq:Dsigma0} as the state of $U_0$ in the sequence $\{U_\ell\}$ defined above, we can compute $\overline{D}_\sigma$ as
\begin{align}
\begin{split}\label{Dbar:cal}
\overline{D}_\sigma(\xi)
&=-\sum_{x,y\in E}\pi(x)\1_\sigma\circ\xi(y)q(x,y)\sum_{z\in E}q(x,z)\sum_{z'\in E}q(z,z')\Pi\big(\xi(z),\xi(z')\big)\\
&\eqspace+\sum_{x,y\in E}\pi(x)\1_\sigma\circ\xi(y)q(x,y)\sum_{z\in E}q(y,z)\Pi\big(\xi(y),\xi(z)\big)\\
&\eqspace+\sum_{x,y\in E}\pi(x)\1_\sigma\circ\xi(x)q(x,y)\sum_{z\in E}q(y,z)\Pi\big(\xi(y),\xi(z)\big)\\
&\eqspace-\sum_{x,y\in E}\pi(x)\1_\sigma\circ\xi(x)q(x,y)\sum_{z\in E}q(x,z)\sum_{z'\in E}q(z,z')\Pi\big(\xi(z),\xi(z')\big)
\end{split}\\
&=-\E\left[\1_\sigma\circ\xi(U_{0})\Pi\big(\xi(U_2),\xi(U_3)\big)\right]+\E\left[\1_\sigma\circ\xi(U_{2})\Pi\big(\xi(U_2),\xi(U_3)\big)\right]\label{Dbar:cal1}\\
&=-\E\left[\1_\sigma\circ\xi(U_{0})\1_\sigma\circ\xi(U_2)\Pi\big(\xi(U_2),\xi(U_3)\big)\right]\notag\\
&\quad -\E\left[\1_\sigma\circ\xi(U_{0})\1_{S\setminus \{\sigma\}}\circ\xi(U_2)\Pi\big(\xi(U_2),\xi(U_3)\big)\right]\notag\\
&\quad +\E\left[\1_\sigma\circ\xi(U_{2})\Pi\big(\xi(U_2),\xi(U_3)\big)\right]\notag\\
&=\E\left[\1_{S\setminus\{\sigma\}}\circ\xi(U_{0})\1_\sigma\circ\xi(U_{2})\Pi\big(\xi(U_2),\xi(U_3)\big)\right]\notag\\
&\quad  -\E\left[\1_\sigma\circ\xi(U_{0})\1_{S\setminus \{\sigma\}}\circ\xi(U_2)\Pi\big(\xi(U_2),\xi(U_3)\big)\right].\notag
\end{align}
Here, we use the reversibility of $q$ with respect to $\pi$ to cancel
the last two terms in \eqref{Dbar:cal} and write the first term in \eqref{Dbar:cal} as the first term in \eqref{Dbar:cal1}. 
See \cite[Lemma~1 on p.8]{CMN} for the case of two types.

The proof of \eqref{eq:Dsigma1} appears in \cite[Lemma~7.1]{C:EGT}. Now \eqref{eq:Dsigma} allows for a quick proof:
$\overline{D}_1(\xi)=(b-c)\overline{f_0f_{11}}-c\overline{f_0f_{10}}-b\overline{f_1f_{01}}$. Then we use the identities $\overline{f_0f_{11}}+\overline{f_0f_{10}}=\overline{f_0f_1}$, $\overline{f_0f_{11}}+\overline{f_0f_{01}}=\overline{f_0f_{\bullet 1}}$, and $\overline{f_0f_{01}}+\overline{f_0f_{01}}=\overline{f_{01}}$. This calculation will be used in the proof of Corollary~\ref{cor:symmetric}.
\end{proof}

We are ready to state the explicit semimartingale decompositions of the density processes and identify the leading order terms. 
From \eqref{dynamics:p1}, \eqref{qwq:exp} and the martingales in \eqref{mg:hat}, we obtain the following decompositions extended from  \eqref{density:dynamics}:
\begin{align}\label{psigma:dec}
p_\sigma(\xi_t)=p_\sigma(\xi_0)+A_\sigma(t)+M_\sigma(t)=p_\sigma(\xi_0)+I_\sigma(t)+R_\sigma(t)+M_\sigma(t),
\end{align}
where
\begin{align}
I_\sigma(t)&=w\int_0^t \overline{D}_\sigma(\xi_s)\d s+\int_0^t \Bigg(\mu(\sigma)\sum_{\sigma'
\in S\setminus\{\sigma\}}p_{\sigma'}(\xi_s)-\mu(S\setminus\{\sigma\}) p_\sigma(\xi_s)\Bigg)\d s,
\label{def:I}\\
R_\sigma(t)&=w^2\sum_{x,y\in E}\pi(x)\int_0^t \big[\1_\sigma\circ\xi_{s}(y)-\1_\sigma\circ\xi_{s}(x)\big]q(x,y)R^w(x,y,\xi_s)\d s,\label{def:R}\\
\begin{split}
M_\sigma(t)&=\sum_{x,y\in E}\pi(x)\int_0^t \big[\1_\sigma\circ\xi_{s-}(y)-\1_\sigma\circ\xi_{s-}(x)\big]\d\widehat{\Lambda}_s(x,y) \\
&\quad +\sum_{x\in E}\pi(x)\int_0^t \1_{S\setminus\{\sigma\}}\circ\xi_{s-}(x)\d \widehat{\Lambda}^\sigma_s(x)\\
&\quad -\sum_{\sigma'\in S\setminus\{\sigma\}}\sum_{x\in E}\pi(x)\int_0^t \1_{\sigma}\circ\xi_{s-}(x)\d \widehat{\Lambda}^{\sigma'}_s(x).\label{def:M}
\end{split}
\end{align}
By \eqref{mg:hat}, the predictable quadratic variations and covariations of $M_\sigma$ and $M_{\sigma'}$, for $\sigma\neq \sigma'$, are 
\begin{align}
\begin{split}\label{def:<M1>}
 \langle M_\sigma,M_{\sigma}\rangle_t
&=\sum_{x,y\in E}\pi(x)^2\int_0^t \big\{\1_\sigma\circ\xi_{s}(y)[1-\1_{\sigma}\circ \xi_s(x)]\\
&\hspace{-.5cm} +[1-\1_{\sigma}\circ \xi_s(y)]\1_\sigma\circ\xi_{s}(x)\big\} q^w(x,y,\xi_{s})\d s\\
&\hspace{-.5cm}  +\sum_{x\in E}\pi(x)^2\int_0^t\big[ \1_{S\setminus\{\sigma\}}\circ\xi_{s-}(x)\mu(\sigma)+\1_\sigma\circ\xi_{s}(x)\mu\big(S\setminus\{\sigma\}\big)\big] \d s,
\end{split}\\
\begin{split}\label{def:<M2>}
\langle M_\sigma,M_{\sigma'}\rangle_t
&=-\sum_{x,y\in E}\pi(x)^2\int_0^t \big[\1_\sigma\circ\xi_{s}(y)\1_{\sigma'}\circ\xi_s(x)\\
&\quad +\1_\sigma\circ\xi_s(y)\1_{\sigma'}\circ\xi_{s}(x)\big] q^w(x,y,\xi_{s})\d s\\
& \quad -\sum_{x\in E}\pi(x)^2\int_0^t\big[ \1_{S\setminus\{\sigma\}}\circ\xi_{s-}(x) \1_{\sigma}\circ\xi_{s-}(x)\mu(\sigma)\\
&\quad +\1_{S\setminus\{\sigma'\}}\circ\xi_{s-}(x) \1_{\sigma'}\circ\xi_{s-}(x)\mu(\sigma')\big] \d s.
\end{split}
\end{align}
In Section~\ref{sec:eqn}, the above equations play the central role in characterizing the limiting density processes.

For this study, we apply the coalescing duality between $(E,q,\mu)$-voter model and the coalescing rate-$1$ $q$-Markov chains $\{B^x;x\in E\}$, where $B^x_0=x$. These chains move independently before meeting, and for any $x,y\in E$, $B^x=B^y$ after their first meeting time
$ M_{x,y}=\inf\{t\geq 0;B^x_t=B^y_t\}$. In the absence of mutation, the duality is given by
\begin{align}\label{dual:1}
\E\left[\prod_{i=1}^n \1_{\sigma_i}\circ \xi_0(B^{x_i}_t)\right]=\E_{\xi_0}\left[\prod_{i=1}^n \1_{\sigma_i}\circ\xi_t(x_i)\right] 
\end{align}
for all $\xi_0\in S^E$, $\sigma_1,\cdots,\sigma_n\in S$, distinct $x_1,\cdots,x_n \in E$ and $n\in \Bbb N$. See the proof of Proposition~\ref{prop:mutation} for the foregoing identity and the extension to the case with mutations.  

Without mutation, the density process is a martingale under the voter model by \eqref{density:dynamics}, and it follows from \eqref{def:M} and \eqref{def:<M1>} that, for any $\sigma\neq \sigma'$, 
\begin{align}\label{eq:p1p0-voter}
\E_\xi^0[p_\sigma(\xi_{t})p_{\sigma'}(\xi_{t})]=p_\sigma(\xi)p_{\sigma'}(\xi)-\nu(\1)\int_0^t \E_\xi^0[p_{\sigma\sigma'}(\xi_{s})+p_{\sigma'\sigma}(\xi_s)]\d s.
\end{align}
For the present problem, the central application of this dual relation is the foregoing identity \cite{CCC}. Let the random variables defined below \eqref{def:Well} to represent frequencies and densities be independent of the coalescing Markov chains. Then the foregoing equality implies that
\begin{align}\label{ergodic}
\P(M_{U,U'}>t)=1-\nu(\1)-2\nu(\1)\int_0^t \P(M_{V,V'}>s)\d s,\quad \forall\;t\geq 0.
\end{align}
See \cite[Corollary~4.2]{CCC} and \cite[Section~3.5.3]{AF:MC}.
This identity for meeting times has several important applications to the diffusion approximation of the voter model density processes. See \cite[Sections~3 and 4]{CCC} and \cite{CC}.

\section{Decorrelation in the ancestral lineage distributions}\label{sec:slow}
This section is devoted to a study of degenerate limits of meeting time distributions. Here, we consider meeting times defined on a sequence of spatial structures $(E_n,q^{(n)})$ as before.  According to the coalescing duality, these distributions are part of the ancestral line distributions of the voter model, and by approximation, the ancestral line distributions of the evolutionary game. On the other hand, these meeting times encode the typical local geometry of the space, but in a rough manner. With the study of these distributions, the main results of this section (Propositions~~\ref{prop:sn-selection} and \ref{prop:kell}) extend to the choice of appropriate time scaling constants and the characterization of the limiting density processes. These properties are crucial to the forthcoming limit theorems.

Our direction in this section can be outlined in more detail  as follows.
Recall the auxiliary random variables defined below \eqref{def:Well}, which are introduced to represent frequencies and densities. Under mild mixing conditions similar to those in \eqref{cond2:thetan} with $\gamma_n$ replaced by $\theta_n$ 
 and the condition $\nu_n(\1)\to 0$, the sequence $\P^{(n)}(M_{V,V'}/\gamma_n\in \cdot)$ is known to converge. The limiting distribution is a convex combination of the delta distribution at zero and an exponential distribution. Moreover, one can choose \emph{some} $s_n\to\infty$ such that $s_n/\gamma_n\to0 $ and  the following $t$-independent limit exists:
\begin{align}\label{cond:kappa0}
\overline{\kappa}_0\,\defeq\,
\lim_{n\to\infty}2\gamma_n\nu_n(\1)\P^{(n)}(M_{V,V'}>s_n t),\quad\forall\;t\in (0,\infty)
\end{align}
with $\overline{\kappa}_0=1$.  See \cite[Corollary~4.2 and Proposition~4.3]{CCC} for these results. As an extension of this existence result, our first goal in this section is to introduce \emph{sufficient} conditions for these sequences $(s_n)$. Specifically, we require that the limit \eqref{cond:kappa0} exists with $\overline{\kappa}_0\in (0,\infty)$. See Section~\ref{sec:dec1}. The following is enough for the existence and the applications in the next section.

\begin{defi}\label{def:slow}
We say that $(s_n)$ is a {\bf slow sequence} if  
\begin{align}\label{cond1:sn}
\lim_{n\to\infty}s_n=\infty,\quad \lim_{n\to\infty}\frac{s_n}{\gamma_n}=0,\quad 
\lim_{n\to\infty}\gamma_n\nu_n(\1)\e^{-ts_n}= 0,\quad \forall\;t\in (0,\infty),
\end{align}
and at least one of the two mixing conditions holds:  
\begin{align}\label{cond2:sn}
\lim_{n\to\infty}\gamma_n\nu_n(\1)\e^{-\mathbf g_n s_n}=0\quad\mbox{or}\quad \lim_{n\to\infty}\frac{\mathbf t^{(n)}_{\rm mix}}{s_n}[1+\log^+(\gamma_n/\mathbf t^{(n)}_{\rm mix})]=0.
\end{align}
\end{defi}

Our second goal is to extend the existence of the limit \eqref{cond:kappa0} to the existence of analogous time-independent limits for other meeting time distributions: for integers $\ell\geq 1$, $\ell_0,\ell_1,\ell_2\geq 0$ with $\ell_0,\ell_1,\ell_2$ all distinct, and all $t\in (0,\infty)$, 
\begin{align}\label{def:kell}
\overline{\kappa}_\ell &\,\defeq\,\lim_{n\to\infty}2\gamma_n\nu_n(\1)\P^{(n)}(M_{U_0,U_\ell}>s_n t);\\
\label{def:|kell1|ell2}
\overline{\kappa}_{(\ell_0,\ell_1)|\ell_2}&\,\defeq\,\lim_{n\to\infty}2\gamma_n\nu_n(\1)\P^{(n)}\big(M_{U_{\ell_0},U_{\ell_1}}>s_n t,M_{U_{\ell_1},U_{\ell_2}}>s_n t\big);\\
\label{def:|||}
\overline{\kappa}_{\ell_0|\ell_1|\ell_2}&\,\defeq\,\lim_{n\to\infty}2\gamma_n\nu_n(\1)\P^{(n)}\big(M_{U_{\ell_0},U_{\ell_1}}>s_n t,M_{U_{\ell_1},U_{\ell_2}}>s_n t,M_{U_{\ell_0},U_{\ell_2}}>s_n t\big).
\end{align}
The extension to $\overline{\kappa}_1$ is straightforward if we allow passing limits along subsequences. Indeed,
it follows from the definition of $\{U_\ell\}$ and $(V,V')$ that
\begin{align}\label{ineq:MUVcompare}
\frac{\pi_{\min}}{\pi_{\max}}\P(M_{V,V'}\in \Gamma)\leq \P(M_{U_0,U_1}\in \Gamma)\leq 
\frac{\pi_{\max}}{\pi_{\min}}\P(M_{V,V'}\in \Gamma),\quad\forall\; \Gamma\in \B(\R_+).
\end{align}
Hence, by taking a subsequence of $(E_n,q^{(n)})$ if necessary,
\eqref{cond:kappa0} and condition (a) of Theorem~\ref{thm:main} imply the existence of the limit $\overline{\kappa}_1$.

In Section~\ref{sec:higher-order}, we prove the existence of the other limits $\overline{\kappa}_\ell$, $\ell\geq 2$. 
More precisely, we prove tightness results as in the case of $\overline{\kappa}_1$ so that the limits may be passed along subsequences. We also prove that the limits $\overline{\kappa}_\ell$, $\ell\geq 2$, are in $(0,\infty)$. Note that in proving these results, we do not impose convergence of local geometry as in the case of discrete tori or random regular graphs.

\subsection{Mixing conditions for local meeting times}\label{sec:dec1}
To apply mixing conditions to meeting times, first, we recall some basic properties of the spectral gap and the mixing time for the product of the continuous-time $q$-Markov chains. Note that by coupling the product chain with initial condition $(x,y)$ after the two coordinates meet, we get the coalescing chain $(B^x,B^y)$ defined before \eqref{eq:p1p0-voter}.

Now, the discrete-time chain for the product chain has a transition matrix such that each of the coordinates is allowed to change with equal probability. Hence, the spectral gap is given by $\widetilde{\mathbf g}=\mathbf g/2$ \cite[Corollary~12.12 on p.161]{LPW}.
If $(\widetilde{q}_t)$ denotes the semigroup of the product chain, then
\begin{align}\label{product:bdd}
\sup_{(x,y)\in E\times E}\big\|\widetilde{q}_t\big((x,y),\cdot\big)-\pi\otimes \pi\big\|_{\rm TV}\leq 2d_{E}(t),
\end{align}
where $d_E$ is the total variation distance defined by \eqref{def:dE}. Additionally, it follows from the definition 
the mixing time in \eqref{def:tmix} that 
\begin{align}\label{ineq:tmix}
d_E(k\mathbf t_{\rm mix})\leq \e^{-k},\quad \forall\;k\in \Bbb N
\end{align}
\cite[Section~4.5 on p.55]{LPW}. By the last two displays, the analogous mixing time $\widetilde{\mathbf t}_{\rm mix}$ of the product chain satisfies 
\begin{align}\label{compare:mix} 
 \widetilde{\mathbf t}_{\rm mix}\leq 3\mathbf t_{\rm mix}.
\end{align}
We are ready to prove the first main result of Section~\ref{sec:slow}. Note that under the condition $\sup_nN_n\pi^{(n)}_{\max}<\infty$ (see the discussion below \eqref{def:wn}), the first condition in \eqref{cond2:sn} implies the first one in \eqref{sn:lim}.

\begin{prop}\label{prop:sn-selection}
Suppose that $(s_n)$ satisfies \eqref{cond1:sn} and at least one of the following mixing conditions:
\begin{align}\label{sn:lim}
\lim_{n\to\infty}\mathbf g_n s_n=\infty\quad\mbox{or}\quad \lim_{n\to\infty}\frac{\mathbf t^{(n)}_{\rm mix}}{s_n}[1+\log^+(\gamma_n/\mathbf t^{(n)}_{\rm mix})]=0.
\end{align}
Then \eqref{cond:kappa0} holds with $\overline{\kappa}_0=1$. 
\end{prop}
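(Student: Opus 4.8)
The plan is to convert the statement about $M_{V,V'}$, whose starting distribution $\nu_n/\nu_n(\1)$ is supported on neighbouring pairs and is \emph{not} stationary, into a statement about $M_{U,U'}$, which starts from $\pi\otimes\pi$ and is therefore amenable to hitting-time exponentiality. The bridge is the exact identity \eqref{ergodic}. Writing $\Phi_n(t)=2\gamma_n\nu_n(\1)\P^{(n)}(M_{V,V'}>s_nt)$ for the quantity whose limit we must identify, note that $s\mapsto \P^{(n)}(M_{V,V'}>s)$ is nonincreasing; integrating it over $[s_na,s_nb]$ and using the integral form of \eqref{ergodic}, namely $2\nu_n(\1)\int_a^b\P^{(n)}(M_{V,V'}>s)\d s=\P^{(n)}(M_{U,U'}>a)-\P^{(n)}(M_{U,U'}>b)$, sandwiches $\Phi_n$ between difference quotients of the survival function of $M_{U,U'}$: for every $0<t_1<t<t_2$,
\[
J_n(t,t_2)\leq \Phi_n(t)\leq J_n(t_1,t),\qquad J_n(a,b)\defeq \frac{\gamma_n\big[\P^{(n)}(M_{U,U'}>s_na)-\P^{(n)}(M_{U,U'}>s_nb)\big]}{s_n(b-a)}.
\]
It therefore suffices to prove $J_n(a,b)\to 1$ for every fixed $0<a<b$; letting $t_1\ua t$ and $t_2\da t$ then yields $\Phi_n(t)\to \overline{\kappa}_0=1$, which is \eqref{cond:kappa0}. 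This reduction is also what upgrades the ``some $s_n$'' result of \cite{CCC} to every slow $s_n$, since $J_n$ depends only on $M_{U,U'}$.

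Second, I would establish a quantitative exponential approximation for $M_{U,U'}$. Coupling after the two coordinates meet identifies $M_{U,U'}$ with the hitting time of the diagonal $\Delta=\{(x,x)\}$ by the reversible product chain started from $\pi\otimes\pi$. Under \eqref{cond1:sn} and \eqref{sn:lim} one has $\gamma_n=\E^{(n)}[M_{U,U'}]\to\infty$ while the relaxation time $1/\mathbf g_n$ (resp.\ the mixing time $\mathbf t^{(n)}_{\rm mix}$) is $o(s_n)=o(\gamma_n)$, so $\Delta$ is a rare, bottleneck target. Transferring the product chain's parameters through $\widetilde{\mathbf g}_n=\mathbf g_n/2$, \eqref{product:bdd}, and $\widetilde{\mathbf t}_{\rm mix}\leq 3\mathbf t_{\rm mix}$ from \eqref{compare:mix}, the almost-exponentiality of hitting times \cite{Aldous:AE,AB,AF:MC} supplies a bound uniform in $u$,
\[
\sup_{u\geq 0}\big|\P^{(n)}(M_{U,U'}>u)-\e^{-u/\gamma_n}\big|\leq \rho_n,
\]
with $\rho_n=\mathcal O\big(1/(\mathbf g_n\gamma_n)\big)$ under the spectral-gap route and $\rho_n=\mathcal O\big((\mathbf t^{(n)}_{\rm mix}/\gamma_n)[1+\log^+(\gamma_n/\mathbf t^{(n)}_{\rm mix})]\big)$ under the mixing-time route. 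The discrepancy between the mean $\gamma_n$ and the quasi-stationary decay rate is of the same order and, since we only ever evaluate at $u=s_nt_i\ll\gamma_n$, can be absorbed into $\rho_n$.

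Third, I would combine the two steps. Writing $\P^{(n)}(M_{U,U'}>s_na)=\e^{-s_na/\gamma_n}+r_a$ with $|r_a|\leq\rho_n$, split $J_n(a,b)$ into a main term and an error term. Because $s_n/\gamma_n\to0$, a first-order expansion gives $\e^{-s_na/\gamma_n}-\e^{-s_nb/\gamma_n}=(s_n/\gamma_n)(b-a)(1+o(1))$, so the main term of $J_n(a,b)$ tends to $1$. The error term is bounded by $\tfrac{\gamma_n}{s_n(b-a)}|r_a-r_b|\leq \tfrac{2\gamma_n\rho_n}{s_n(b-a)}$, and $\gamma_n\rho_n/s_n\to0$ under \emph{either} alternative in \eqref{sn:lim}: the spectral-gap route gives $\gamma_n\rho_n/s_n=\mathcal O(1/(\mathbf g_ns_n))\to0$, while the mixing-time route gives $\gamma_n\rho_n/s_n=\mathcal O\big((\mathbf t^{(n)}_{\rm mix}/s_n)[1+\log^+(\gamma_n/\mathbf t^{(n)}_{\rm mix})]\big)\to0$. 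Hence $J_n(a,b)\to 1$, and the sandwich from the first step finishes the proof.

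The main obstacle is the second step: I need the exponential approximation with an error $\rho_n$ that is small enough that $\gamma_n\rho_n=o(s_n)$, rather than merely the softer $o(\gamma_n)$ that suffices for a distributional limit of $M_{U,U'}/\gamma_n$. The two alternatives in \eqref{sn:lim} are calibrated precisely so that the relaxation time $1/\mathbf g_n$, respectively the mixing time with its logarithmic correction, is $o(s_n)$; since the classical hitting-time exponentiality produces an error of exactly that order, the difference quotient $J_n$ probed at the slow scale $s_n$ survives division by $s_n/\gamma_n$. Care is also needed to confirm that the bound is genuinely uniform in $u$, so that $|r_a-r_b|\leq 2\rho_n$ with no growth in $t$, and to verify the mean-versus-rate discrepancy noted above does not accumulate.
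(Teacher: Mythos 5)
Your proposal is correct, and it reaches the conclusion by a more direct route than the paper, while sharing the same analytic core. Both arguments rest on exactly two inputs: the identity \eqref{ergodic} (equivalently its differenced form \eqref{delta:f}) linking the tail of $M_{V,V'}$ to increments of the tail of $M_{U,U'}$, and the uniform almost-exponentiality bounds of \cite{Aldous:AE,AF:MC} applied to the product chain via $\widetilde{\mathbf g}_n=\mathbf g_n/2$ and $\widetilde{\mathbf t}^{(n)}_{\rm mix}\leq 3\mathbf t^{(n)}_{\rm mix}$; your requirement $\gamma_n\rho_n/s_n\to 0$ is precisely how the two alternatives in \eqref{sn:lim} enter the paper's estimates \eqref{eq:sn-selection1}--\eqref{eq:sn-selection2}, and your limit $J_n(a,b)\to 1$ is literally the paper's difference-quotient asymptotics \eqref{fnsn:est}. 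Where you diverge is the concluding mechanism: the paper first proves the Laplace-transform limit \eqref{eq:Lap_nun}, uses it to get uniform boundedness of $t\mapsto 2\gamma_n\nu_n(\1)\P^{(n)}(M_{V,V'}>s_nt)$ on compacts away from $0$, extracts subsequential limits $g_\infty$ by Helly's selection theorem, and identifies $g_\infty\equiv 1$ through \eqref{fnsn:est}, dominated convergence, and left-continuity; you instead exploit the monotonicity of $s\mapsto\P^{(n)}(M_{V,V'}>s)$ to sandwich the quantity of interest between the two difference quotients $J_n(t,t_2)$ and $J_n(t_1,t)$, so that $J_n(a,b)\to 1$ for fixed $a<b$ yields \eqref{cond:kappa0} with $\overline{\kappa}_0=1$ immediately, with no compactness or selection step (indeed, since both bounds already converge to $1$, even the refinement $t_1\ua t$, $t_2\da t$ is unnecessary). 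Your route is shorter and more elementary; the paper's Laplace/Helly machinery buys nothing extra for this proposition, though its Step 1 computation is reused elsewhere in spirit (e.g., the uniform-boundedness device reappears in Section~\ref{sec:eqn}). Two minor points you handled correctly and should keep explicit in a write-up: the exponential approximation must be uniform in $u$ and stated against $\e^{-u/\gamma_n}$ with the mean $\gamma_n=\E^{(n)}[M_{U,U'}]$ itself (as in \cite[Proposition~3.23]{AF:MC}), so no mean-versus-rate discrepancy accumulates; and the Taylor expansion of the main term uses only $s_n/\gamma_n\to 0$ from \eqref{cond1:sn}, matching the paper's use of hypotheses.
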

\begin{proof}
Write $f_n(t)=\P^{(n)}(M_{U,U'}>t)$ and $g_n(t)=\P^{(n)}(M_{V,V'}>t)$. The required result is proved in two steps. \medskip

\noindent {\bf Step 1.} We start with a preliminary result: for all $t_0\in[0,\infty)$ and $\mu\in(0,\infty)$,
\begin{align}\label{eq:Lap_nun}
&\lim_{n\to\infty}2\gamma_n\nu_n(\1)
\int_0^\infty \e^{-\mu t}g_n\big(s_n(t+t_0)\big)\d t=\frac{1}{\mu}.
\end{align}

To obtain \eqref{eq:Lap_nun}, first, we derive a representation of the integrals in \eqref{eq:Lap_nun} by $f_n(t)$. 
Note that \eqref{ergodic} under the $q^{(n)}$-chain takes the following form:
\[
f_n(s_nt)=
1-\nu_n(\1)-2\nu_n(\1)s_n\int_0^t g_n(s_ns)\d s,\quad t\geq 0.
\]
Hence,  for any fixed $0\leq t_0<\infty$,
\begin{align}\label{delta:f}
f_n(s_n(t+t_0))-f_n(s_nt_0)=-2\nu_n(\1)s_n\int_{0}^{t} g_n\big(s_n(s+t_0)\big)\d s,\quad t\geq 0.
\end{align}
Taking Laplace transforms of both sides of the last equality, we get, for $\mu>0$,
\begin{align}
\int_{0}^\infty \e^{-\mu t}\big[f_n\big(s_n(t+t_0)\big)-f_n(s_nt_0)\big]\d t&=-2\nu_n(\1)s_n\int_0^\infty \e^{-\mu t}\int_0^{t}g_n\big(s_n(s+t_0)\big)\d s\d t\notag\\
&=-\frac{2\nu_n(\1)s_n}{\mu}\int_0^\infty \e^{-\mu t}g_n\big(s_n (t+t_0)\big)\d t,\notag
\end{align}
where the last integral coincides with the integral in \eqref{eq:Lap_nun}.

Next, rewrite the last equality as
\begin{align}
&\quad 2\gamma_n\nu_n(\1)
\int_0^\infty \e^{-\mu t}g_n\big(s_n (t+t_0)\big)\d t\notag\\
&= -\frac{\gamma_n\mu}{s_n} \int_0^\infty \e^{-\mu t}\big[f_n \big(s_n(t+t_0)\big)-f_n(s_nt_0)\big]\d t\notag\\
\begin{split}
&=-\frac{\gamma_n\mu}{s_n} \int_0^\infty \e^{-\mu t}\Big\{\big[f_n \big(s_n(t+t_0)\big)-f_n(s_nt_0)\big]-
[\e^{-s_n(t+t_0)/\gamma_n}-\e^{-s_nt_0/\gamma_n}]
\Big\}\d t \\
&\quad +\frac{ \e^{-s_nt_0/\gamma_n}}{\mu+s_n/\gamma_n}.\label{eq:sn-selection0}
\end{split}
\end{align}
The last term tends to $1/\mu$ since $s_n/\gamma_n\to 0$. To take the limit of the integral term in \eqref{eq:sn-selection0}, we use the first mixing condition in \eqref{cond2:sn}. In this case, a bound for exponential approximations of the distributions of $M_{U,U'}$ \cite[Proposition~3.23]{AF:MC} gives
\begin{align}\label{eq:sn-selection1}
\begin{split}
&\left|\frac{\gamma_n\mu}{s_n} \int_0^\infty \e^{-\mu t}\Big\{\big[f_n \big(s_n(t+t_0)\big)-f_n(s_nt_0)\big]-
[\e^{-s_n(t+t_0)/\gamma_n}-\e^{-s_nt_0/\gamma_n}]
\Big\}\d t\right|\\
&\leq \frac{2}{\widetilde{\mathbf  g}_n s_n}\xrightarrow[n\to\infty]{} 0
\end{split}
\end{align}
now that $\widetilde{\mathbf g}_n=\mathbf g_n/2$.
Alternatively, by a different bound from \cite[Theorem~1.4]{Aldous:AE}, the foregoing inequality holds with the bound replaced by 
\begin{align}\label{eq:sn-selection2}
\frac{C_{\ref{eq:sn-selection2}}\widetilde{\mathbf t}^{(n)}_{\rm mix}}{s_n}\big[1+\log^+(\gamma_n/\widetilde{\mathbf t}^{(n)}_{\rm mix})\big]
\leq \frac{C_{\ref{eq:sn-selection2}}\cdot 3\mathbf t^{(n)}_{\rm mix}}{s_n}\big[1+\log^+(\gamma_n/(3\mathbf t^{(n)}_{\rm mix}))\big]
\end{align}
by \eqref{compare:mix}, the monotonicity of $x\mapsto x(1+\log (x^{-1}\vee 1))$ on $(0,\infty)$, where $C_{\ref{eq:sn-selection2}}$ is independent of the $q^{(n)}$-chains.The last term in \eqref{eq:sn-selection2} tends to zero by the second mixing condition in \eqref{cond2:sn}. 

Finally, we apply \eqref{eq:sn-selection1} and \eqref{eq:sn-selection2} to \eqref{eq:sn-selection0}. Since the last term in \eqref{eq:sn-selection0} tends to $1/\mu$, we have proved \eqref{eq:Lap_nun}. \medskip

\noindent {\bf Step 2.} We are ready to prove the existence of the limit in \eqref{cond:kappa0} and its independence of $t$.
First, note that since $g_n$ is decreasing, we have
\[
2\gamma_n\nu_n(\1)\e^{-\mu t}g_n\big(s_n(t+t_0)\big)\leq \frac{1}{t}2\gamma_n\nu_n(\1)\int_0^t \e^{-\mu s}g_n\big(s_n(s+t_0)\big)\d s,\quad\forall\;t,t_0\in (0,\infty),
\]
whereas the last integral is bounded by the same integral with the upper limit $t$ of integration replaced by $\infty$. By the \eqref{eq:sn-selection0} and the convergence proven for it in the preceding step, the last inequality implies that $t\mapsto 2\gamma_n\nu_n(\1)\e^{-\mu t}g_n(s_nt)$, $n\geq 1$, are uniformly bounded on $[a,\infty)$, for any $a\in (0,\infty)$. Hence, by Helly's selection theorem, every subsequence of $\{t\mapsto 2\gamma_n\nu_n(\1)g_n(s_nt)\}$ has a further subsequence, say indexed by $n_j$, such that for some left-continuous function $g_\infty$ on $(0,\infty)$,
\begin{align}\label{def:ginfty}
\lim_{j\to\infty}2\gamma_{n_j}\nu_{n_j}(\1)g_{n_j}(s_{n_j}t)= g_\infty(t),\quad \forall\; t\in (0,\infty). 
\end{align}
Moreover, this convergence holds boundedly on compact subsets of $ (0,\infty)$ in $t$.

To find $g_\infty$, note that, as in \eqref{eq:sn-selection1} and \eqref{eq:sn-selection2}, either of the mixing conditions \eqref{cond2:sn} implies that for fixed $0<t_1<t_2<\infty$,
\begin{align}\label{fnsn:est}
\begin{split}
\frac{\gamma_n}{s_n}[f_n(s_nt_2)-f_n(s_nt_1)]&=\frac{\gamma_n}{s_n}\big(\e^{-s_nt_2/\gamma_n}-\e^{-s_nt_1/\gamma_n}\big)+o(1)\\
&=-(t_2-t_1)+o(1),
\end{split}
\end{align}
where $o(1)$'s refer to terms tending to $0$ as $n\to\infty$. By the foregoing equality and \eqref{delta:f}, we get
\[
t_2-t_1=\lim_{j\to\infty}\int_{t_1}^{t_2}2\gamma_{n_j}\nu_{n_j}(\1)g_{n_j}(s_{n_j}t)\d t=\int_{t_1}^{t_2}g_\infty (t)\d t,\quad \forall\;0<t_1<t_2<\infty,
\]
where the last equality follows from \eqref{def:ginfty} and dominated convergence. The last equality and the left-continuity of $g_\infty$ give $g_\infty\equiv 1$ on $(0,\infty)$. We obtain \eqref{cond:kappa0} upon passing limit along $(n_j)$. By the choice of $(n_j)$, the convergence in \eqref{cond:kappa0} along the whole sequence holds. The proof is complete. 
\end{proof}

Proposition~\ref{prop:sn-selection} and the convergence in \eqref{MUU:RG}, extended to the convergence of the first moments, are enough to validate \eqref{cond:kappa0} and reinforce it to an explicit form on large random regular graphs. In Section~\ref{sec:rrg}, we give an alternative proof of these properties of \eqref{cond:kappa0}. In this case, the limit \eqref{cond:kappa0} holds only by passing subsequential limits. Nevertheless, the use of subsequences is due to the randomness of the graphs.

\subsection{Extensions}\label{sec:higher-order}
We start with a basic recursion formula to relate tail distributions of the relevant meeting times $M_{U_0,U_\ell}$, $\ell\geq 2$, to the tail distribution of $M_{U_0,U_1}$.

\begin{lem}\label{lem:MT}
For any integer $\ell\geq 1$ and $t\geq 0$,  it holds that 
\begin{align}\label{eq:shifttime0}
\begin{split}
\P(M_{U_0,U_\ell}>t)&=\e^{-2t}\P(U_0\neq U_\ell)+\int_0^t 2\e^{-2(t-s)}\P(M_{U_0,U_{\ell+1}}>s)\d s\\
&\quad -\int_0^t 2\e^{-2(t-s)}\sum_{x,y\in E}\pi(x)q^{\ell}(x,x)q(x,y)\P(M_{x,y}>s)\d s.
\end{split}
\end{align}
\end{lem}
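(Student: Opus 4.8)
The plan is to derive \eqref{eq:shifttime0} by conditioning on the first jump of the product Markov chain $(U_0, U_\ell)$ and recognizing the resulting pieces in terms of shifted indices. Recall that $\{U_\ell\}$ is a stationary discrete-time $q$-Markov chain, so the meeting time $M_{U_0,U_\ell}$ refers to the continuous-time coalescing chains $\{B^x\}$ started from the (random) pair $(U_0, U_\ell)$. Each coordinate of the product chain attempts a jump at rate $1$, so the first jump of the pair occurs at rate $2$, and before that jump the two coordinates have not yet met (assuming they start distinct). This immediately explains the leading term: with probability $\P(U_0 \neq U_\ell)$ the chains start apart, and the event $\{M_{U_0,U_\ell} > t\}$ on the no-jump-before-$t$ piece contributes the factor $\e^{-2t}$.

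First I would set up the first-jump decomposition precisely. On the event that the first jump of the product chain happens at time $s \in (0,t)$, with the remaining probability $2\,\e^{-2s}\,\d s$ accounting for the rate-$2$ exponential holding time, I would condition on which coordinate jumps. By symmetry of the product dynamics each coordinate is equally likely to move; the key observation is that moving one coordinate of $(U_0, U_\ell)$ one step along the chain should reproduce the law of $(U_0, U_{\ell+1})$, up to the stationary structure. Concretely, stationarity and the Markov property let me identify the post-jump configuration with a pair whose meeting time is $M_{U_0,U_{\ell+1}}$, which yields the main integral term $\int_0^t 2\e^{-2(t-s)}\P(M_{U_0,U_{\ell+1}}>s)\,\d s$ after reindexing $t-s \mapsto s$ inside the convolution.

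The subtlety — and what I expect to be the main obstacle — is the correction integral, the last term in \eqref{eq:shifttime0}. When I shift an index from $\ell$ to $\ell+1$ by advancing a coordinate one step, I am implicitly summing over \emph{all} transitions $x \to y$ weighted by $q(x,y)$, including the possibility that the advancing coordinate lands exactly on the current position of the other coordinate, i.e., creates an immediate meeting. The clean identification with $(U_0, U_{\ell+1})$ overcounts precisely the configurations where $U_\ell = U_0$ (so that advancing would have the two indices coincide at the start), and this discrepancy is what the subtracted term compensates. The factor $\sum_{x,y} \pi(x)\, q^{\ell}(x,x)\, q(x,y)$ is exactly the probability weight that $U_0 = U_\ell = x$ (the $\ell$-step return probability $q^\ell(x,x)$ against the stationary weight $\pi(x)$) followed by a step $x \to y$; the factor $\P(M_{x,y}>s)$ then records that, having forced this coincidence-then-separation, the subsequent meeting time must be computed afresh from the deterministic pair $(x,y)$. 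I would therefore track the boundary/diagonal contribution carefully through the first-jump analysis, writing $\P(U_0 \neq U_\ell) = 1 - \sum_x \pi(x) q^\ell(x,x)$ implicitly and isolating the diagonal terms so that the shift-by-one identity holds only on the off-diagonal part. Assembling the off-diagonal shift (giving the $M_{U_0,U_{\ell+1}}$ term) with the diagonal correction (giving the subtracted $M_{x,y}$ term) and the no-jump piece produces \eqref{eq:shifttime0}; the main care is bookkeeping the reversibility of $q$ with respect to $\pi$ and the stationarity of $\{U_\ell\}$ so that the index shift is exact.
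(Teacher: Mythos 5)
Your proposal is correct and takes essentially the same route as the paper: both condition on the first update time of the coalescing pair (an exponential of rate $2$), identify the post-jump configuration with $(U_0,U_{\ell+1})$ using stationarity and the reversibility of $q$, and obtain the subtracted term by removing the diagonal mass $\P(U_0=U_\ell=x,\,U_{\ell+1}=y)=\pi(x)q^{\ell}(x,x)q(x,y)$, exactly as in the paper's identity \eqref{eq:st3}. One passing remark of yours --- that the shift ``overcounts'' jumps landing on the other coordinate's position --- is misleading, since post-jump meetings are correctly encoded in the event $\{M_{U_0,U_{\ell+1}}>s\}$; but you immediately self-correct to the right account, namely that the correction compensates only for the initial diagonal $\{U_0=U_\ell\}$.
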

\begin{proof}
Since $M_{x,x}\equiv 0$ and $(U_0,U_\ell)$ is independent of the meeting times, conditioning on $(U_0,U_\ell)$ gives $\P(M_{U_0,U_\ell}>t)=\P(M_{U_0,U_\ell}>t,U_0\neq U_\ell)$. Conditioning  on the first update time of $(B^{U_0},B^{U_\ell})$, which is an exponential variable with mean $1/2$, yields
\begin{align}\label{eq:st1}
\P(M_{U_0,U_\ell}>t)&=\e^{-2t}\P(U_0\neq U_\ell)+\int_0^t 2\e^{-2(t-s)}\P(U_0\neq U_{\ell},M_{U_0,U_{\ell+1}}>s)\d s.
\end{align}
Here, the initial condition $(U_0,U_{\ell+1})$ in the last term follows from transferring the first transition of state of $(B^{U_0},B^{U_\ell})$ to the initial condition. We also use the stationarity of $\{U_\ell;\ell\geq 0\}$ when that first transition is made by $B^{U_0}$. To rewrite the integral term in \eqref{eq:st1}, note that 
\begin{align*}
\P(U_0\neq U_\ell,U_0=x,U_{\ell+1}=y)
&=\P(U_0=x,U_{\ell+1}=y)-\P(U_0=U_\ell,U_0=x,U_{\ell+1}=y)\\
&=\pi(x)q^{\ell+1}(x,y)-\pi(x)q^{\ell}(x,x)q(x,y)
\end{align*}
so that
\begin{align}\label{eq:st3}
\begin{split}
\P(U_0\neq U_{\ell},M_{U_0,U_{\ell+1}}>s)=&\P(M_{U_0,U_{\ell+1}}>s)\\
&-\sum_{x,y\in E}\pi(x)q^{\ell}(x,x)q(x,y)\P(M_{x,y}>s).
\end{split}
\end{align}
Applying \eqref{eq:st3} to (\ref{eq:st1}) yields
 (\ref{eq:shifttime0}). 
\end{proof}

We are ready to prove the existence of the limits in \eqref{def:kell} and \eqref{def:|kell1|ell2}.

\begin{prop}\label{prop:kell}
For any sequence $(s_n)$ satisfying \eqref{cond1:sn}, we have the following properties:
\begin{enumerate}
\item [\hypertarget{prop:kell1}{\rm (1$\cc$)}] For any integer $\ell\geq 2$, every subsequence of $(E_n,q^{(n)})$ contains a further subsequence such that the limit in \eqref{def:kell}
exists in $[\overline{\kappa}_1,\ell\overline{\kappa}_1]$ and is independent of $t\in (0,\infty)$.

\item [\hypertarget{prop:kell2}{\rm (2$\cc$)}] Without taking any subsequence, \eqref{def:kell} holds for $\ell=2$ with $\overline{\kappa}_2=\overline{\kappa}_1$. 
\item [\hypertarget{prop:kell3}{\rm (3$\cc$)}] Suppose that \eqref{cond:q2} holds for some constant $q^{(\infty),2}$.
Then without taking any subsequence, \eqref{def:kell} holds $\overline{\kappa}_3=(1+q^{(\infty),2})\overline{\kappa}_1$.

\item [\hypertarget{prop:kell4}{\rm (4$\cc$)}] 
For all distinct nonnegative integers $\ell_0,\ell_1,\ell_2$, it holds that 
\[
\ok_{(\ell_1,\ell_2)|\ell_0}+\ok_{(\ell_0,\ell_1)|\ell_2}-\ok_{\ell_0|\ell_1|\ell_2}=\ok_{|\ell_2-\ell_0|},
\] 
provided that all of the limits defining these constants exist.

\end{enumerate}
\end{prop}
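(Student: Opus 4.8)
The plan is to prove the claimed identity purely combinatorially, without any limiting or mixing arguments, by applying inclusion--exclusion to the three events whose joint probability tails define the three $\ok$-constants. Fix distinct nonnegative integers $\ell_0,\ell_1,\ell_2$ and consider, on the same probability space carrying the coalescing chains, the three ``non-meeting'' events
\begin{align*}
A_{01}=\{M_{U_{\ell_0},U_{\ell_1}}>s_nt\},\quad A_{12}=\{M_{U_{\ell_1},U_{\ell_2}}>s_nt\},\quad A_{02}=\{M_{U_{\ell_0},U_{\ell_2}}>s_nt\}.
\end{align*}
The key structural observation is that these events cannot fail to overlap in an arbitrary pattern: because coalescence is an equivalence-type relation on $\{U_{\ell_0},U_{\ell_1},U_{\ell_2}\}$ once two of the three chains have met and merged, \emph{at most one} of the three pairs can be in the ``already met'' state while the other two remain ``not yet met.'' Concretely, if $U_{\ell_0}$ has met $U_{\ell_1}$ and $U_{\ell_1}$ has met $U_{\ell_2}$ by time $s_nt$, then $U_{\ell_0}$ has necessarily met $U_{\ell_2}$ as well. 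This forces the containment $A_{01}^c\cap A_{12}^c\subseteq A_{02}^c$, and symmetrically for the two other pairings. I would record these three containments first, as they are the combinatorial heart of the argument.

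From the containments, the plan is to compute $\P(A_{01}\cap A_{12})$, $\P(A_{01}\cap A_{02})$, $\P(A_{12}\cap A_{02})$ and $\P(A_{01}\cap A_{12}\cap A_{02})$ against one another. The containment $A_{01}^c\cap A_{12}^c\subseteq A_{02}^c$ says that on the event $A_{02}$ (the pair $(\ell_0,\ell_2)$ has not yet met), at least one of $A_{01}$, $A_{12}$ must hold; equivalently $A_{02}\subseteq A_{01}\cup A_{12}$, so $A_{02}=(A_{02}\cap A_{01})\cup(A_{02}\cap A_{12})$ and hence, by inclusion--exclusion on this decomposition,
\begin{align*}
\P(A_{02})=\P(A_{01}\cap A_{02})+\P(A_{12}\cap A_{02})-\P(A_{01}\cap A_{12}\cap A_{02}).
\end{align*}
Here $\P(A_{01}\cap A_{02})$ is exactly the pairwise tail defining $\ok_{(\ell_1,\ell_0)|\ell_2}=\ok_{(\ell_0,\ell_1)|\ell_2}$ (the conditioned vertex being $\ell_0$, shared between the two non-meeting pairs), $\P(A_{12}\cap A_{02})$ defines $\ok_{(\ell_1,\ell_2)|\ell_0}$, and the triple intersection defines $\ok_{\ell_0|\ell_1|\ell_2}$. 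It then remains to identify the single tail $\P(A_{02})=\P(M_{U_{\ell_0},U_{\ell_2}}>s_nt)$ after normalization. By the stationarity of $\{U_\ell\}$, this tail depends only on the gap $|\ell_2-\ell_0|$ and equals $\P(M_{U_0,U_{|\ell_2-\ell_0|}}>s_nt)$, so its normalized limit is precisely $\ok_{|\ell_2-\ell_0|}$. Multiplying the displayed identity by $2\gamma_n\nu_n(\1)$ and passing to the limit—legitimate since all four constants are assumed to exist—yields $\ok_{|\ell_2-\ell_0|}=\ok_{(\ell_0,\ell_1)|\ell_2}+\ok_{(\ell_1,\ell_2)|\ell_0}-\ok_{\ell_0|\ell_1|\ell_2}$, which is the asserted identity.

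The main obstacle, and the point deserving the most care, is the bookkeeping of the subscript conventions: verifying that the event $A_{01}\cap A_{02}$—where the two non-meeting constraints share the index $\ell_0$—is indeed the one labeled by $\ok_{(\ell_0,\ell_1)|\ell_2}$ rather than one of the other two conditioned constants, and that the reduction of $\P(A_{02})$ to the gap $|\ell_2-\ell_0|$ uses stationarity correctly when $\ell_0$ and $\ell_2$ need not be consecutive. Once the three set-theoretic containments are established and the labels matched, the passage to the limit is immediate from the assumed existence of the constants, so the analytic content is light; the proposition is essentially an exact identity among the pre-limit normalized tails that survives the limit verbatim.
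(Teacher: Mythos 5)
Your treatment of part (4$\cc$) is correct, and it is in substance identical to the paper's own argument. The coalescence containment $A_{01}^c\cap A_{12}^c\subseteq A_{02}^c$ is exactly the observation the paper invokes (``on $\{M_{U_{\ell_0},U_{\ell_2}}>s_nt\}$ we cannot have both other meetings''), and your inclusion--exclusion on $A_{02}=(A_{02}\cap A_{01})\cup(A_{02}\cap A_{12})$ is the same computation the paper organizes as a disjoint three-way decomposition of $A_{02}$ whose fourth cell is empty; the reduction of $\P(A_{02})$ to $\ok_{|\ell_2-\ell_0|}$ by stationarity of $\{U_\ell\}$ (together with symmetry of $M$ when $\ell_0>\ell_2$) also matches. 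One bookkeeping slip: you attach $\ok_{(\ell_0,\ell_1)|\ell_2}$ to $\P(A_{01}\cap A_{02})$, whose two non-meeting pairs share the index $\ell_0$, whereas the paper's actual usage---visible in its proof of (4$\cc$) and in Lemma~\ref{lem:RW} and \eqref{def:Qsigma}---puts the shared index \emph{after} the bar, so that event is $\ok_{(\ell_1,\ell_2)|\ell_0}$, while $\P(A_{12}\cap A_{02})$ is $\ok_{(\ell_0,\ell_1)|\ell_2}$. (The displayed definition \eqref{def:|kell1|ell2} is itself inconsistent with this later usage, so your flagged caution about conventions was warranted.) The slip is harmless here because the target identity is symmetric under exchanging the two pairwise constants, so your conclusion stands.

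The genuine gap is one of scope: the proposition has four parts and you prove only (4$\cc$), which in the paper is a short closing paragraph. Parts (1$\cc$)--(3$\cc$) carry the real analytic content, and nothing in your proposal addresses them: they rest on the recursion of Lemma~\ref{lem:MT} (formula \eqref{eq:shifttime0}, obtained by conditioning on the first jump of the coalescing pair), which relates the tail of $M_{U_0,U_\ell}$ to that of $M_{U_0,U_{\ell+1}}$ up to an exponential smoothing and a correction term of the form $\sum_{x,y}\pi(x)q^{\ell}(x,x)q(x,y)\P(M_{x,y}>\cdot)$, dominated by the tail of $M_{U_0,U_1}$. From this the paper extracts two-sided bounds forcing any subsequential limit of $2\gamma_n\nu_n(\1)\P^{(n)}(M_{U_0,U_\ell}>s_nt)$ into $[\ok_1,\ell\,\ok_1]$ and independent of $t$, with a Helly-type selection supplying existence along subsequences; (2$\cc$) then uses $\tr(q^{(n)})=0$ to make the correction term vanish at $\ell=1$ and obtain $\ok_2=\ok_1$ without subsequences, and (3$\cc$) uses \eqref{cond:q2} to evaluate the correction at $\ell=2$ and obtain $\ok_3=(1+q^{(\infty),2})\ok_1$. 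Your purely combinatorial inclusion--exclusion cannot produce these statements, since they concern existence and identification of limits rather than exact pre-limit identities; as a proof of the full proposition the proposal is a quarter of the job, although the quarter you did is done correctly.
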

\begin{proof} 
(1$\cc$) To lighten notation in the rest of this proof but only in this proof, write $A_\ell=\P(U_0\neq U_\ell)$, $J_\ell$ for $M_{U_0,U_\ell}$, 
\[
B_\ell=\sum_{x,y\in E}\pi(x)q^\ell(x,x)q(x,y),
\]
and $K_\ell$ for the first meeting time for the pair of coalescing Markov chains where the initial condition is distributed independently as $B_\ell^{-1}\pi(x)q^\ell(x,x)q(x,y)$ provided that $B_\ell\neq 0$. We set $K_\ell$ to be an arbitrary random variable.

Fix an integer $\ell\geq 1$. If $\mathbf e$ is an independent exponential variable with mean $1$, then \eqref{eq:shifttime0} can be written as 
\[
\P(J_\ell>t)=A_\ell\P(\tfrac{1}{2} \mathbf e>t)+\P(J_{\ell+1}+\tfrac{1}{2} \mathbf e>t,\tfrac{1}{2} \mathbf e\leq t)-B_\ell\P(K_{\ell}+\tfrac{1}{2} \mathbf e>t,\tfrac{1}{2} \mathbf e\leq t).
\]
After rearrangement, the foregoing equality yields
\begin{align*}
\P(J_{\ell+1}+\tfrac{1}{2} \mathbf e>t)
&=\P(J_\ell>t)+B_\ell\P(K_{\ell}+\tfrac{1}{2} \mathbf e>t)+(1-A_\ell-B_\ell)\P(\tfrac{1}{2} \mathbf e>t).
\end{align*}
Hence, for all left-open intervals $\Gamma\subset (0,\infty)$, 
\begin{align}\label{id:JK}
\begin{split}
&\quad\, \P(J_{\ell+1}+\tfrac{1}{2} \mathbf e\in \Gamma)+(A_\ell+B_\ell)\P(\tfrac{1}{2} \mathbf e\in \Gamma)\\
&=\P(J_\ell\in \Gamma)+B_\ell\P(K_{\ell}+\tfrac{1}{2} \mathbf e\in \Gamma)+\P(\tfrac{1}{2} \mathbf e\in \Gamma).
\end{split}
\end{align}
Since $q^{\ell}(x,x)\leq 1$,
we have $B_\ell\P(K_\ell\in \cdot)\leq \P(J_1\in \cdot)$, and so, the foregoing identity gives
\begin{align}\label{id:JK1}
\begin{split}
&\quad \P(J_{\ell+1}+\tfrac{1}{2} \mathbf e\in \Gamma)+(A_\ell+B_\ell)\P(\tfrac{1}{2} \mathbf e\in \Gamma)\\
&\leq \P(J_\ell\in \Gamma)+\P(J_{1}+\tfrac{1}{2} \mathbf e\in \Gamma)+\P(\tfrac{1}{2} \mathbf e\in \Gamma).
\end{split}
\end{align}

We are ready to prove the required result. 
For any $0<T_0<T_1< \infty$, repeated applications of the first and third limits in \eqref{cond1:sn} for all of the next three equalities give
\begin{align}
&\quad\, \limsup_{n\to\infty}2\gamma_n\nu_n(\1) \P^{(n)}\big(J_{\ell+1}\in (s_nT_0,s_nT_1]\big)\notag\\
&\leq  \limsup_{n\to\infty}2\gamma_n\nu_n(\1) \P^{(n)}\big(J_{\ell+1}+\tfrac{1}{2}\mathbf e\in (s_nT_0,s_n2T_1]\big)\notag\\
&\leq \limsup_{n\to\infty}2\gamma_n\nu_n(\1)\P^{(n)}\big(J_{\ell}\in (s_nT_0,s_n2T_1]\big)\notag\\
&\quad +\limsup_{n\to\infty}2\gamma_n\nu_n(\1)\P^{(n)}\big(J_{1}+\tfrac{1}{2}\mathbf e\in (s_nT_0,s_n2T_1]\big)\label{eq:kell0}\\
&\leq \limsup_{n\to\infty}2\gamma_n\nu_n(\1)\P^{(n)}\big(J_{\ell}\in (s_nT_0,s_n2T_1]\big)\notag\\
&\quad +\limsup_{n\to\infty}2\gamma_n\nu_n(\1)\P^{(n)}\big(J_{1}\in (s_n2^{-1} T_0,s_n2T_1]\big)\notag\\
&\leq (\ell+1)\limsup_{n\to\infty}2\gamma_n\nu_n(\1)\P^{(n)}\big(J_{1}\in (s_n2^{-\ell}T_0,s_n2^{\ell}T_1]\big)=0,\label{eq:kell1}
\end{align}
where \eqref{eq:kell0} also uses \eqref{id:JK1}, the last inequality follows from induction, and the equality in \eqref{eq:kell1} follows from \eqref{def:kell} with $\ell=1$. Moreover, by setting $T_0=t$ and $T_1=\infty$, a similar argument as in the display for \eqref{eq:kell1} shows that \eqref{def:kell} with $\ell=1$ gives
\begin{align}\label{eq:kell3}
\limsup_{n\to\infty}2\gamma_n\nu_n(\1)\P^{(n)}(J_{\ell+1}>s_n a)\leq (\ell+1) \overline{\kappa}_1,\quad \forall\;t\in (0,\infty).
\end{align}
On the other hand,
since $\P(J_{\ell+1}+\tfrac{1}{2} \mathbf e\in \cdot)+|A_\ell+B_\ell-1|\P(\tfrac{1}{2}\mathbf e\in \cdot)\geq \P(J_\ell\in \cdot)$ by \eqref{id:JK}, it follows from \eqref{def:kell} with $\ell=1$ and an argument similar to the one leading to \eqref{eq:kell1} that 
\begin{align}
\liminf_{n\to\infty}2\gamma_n\nu_n(\1)\P^{(n)}(J_{\ell+1}>s_n t)\geq \overline{\kappa}_1>0,\quad\forall\;t\in (0,\infty).\label{eq:kell4}
\end{align}

Combining \eqref{eq:kell3} and \eqref{eq:kell4}, we deduce that for fixed $t_0\in (0,\infty)$, any subsequence of the numbers
$2\gamma_n\nu_n(\1)\P^{(n)}(J_{\ell+1}>s_nt_0)$ has a further subsequence that converges in $[\overline{\kappa}_1,(\ell+1)\overline{\kappa}_1]$. By \eqref{eq:kell1}, this limit extends to the existence of the limit of the corresponding subsequence of 
$2\gamma_n\nu_n(\1)\P^{(n)}(J_{\ell+1}>s_nt)$ 
for any $t\in (0,\infty)$, and all of these limits for different $t$ are equal.
We have proved \eqref{def:kell}. \medskip

\noindent (2$\cc$) Note that $B_1^{(n)}=0$ since $\tr(q^{(n)})=0$ by assumption.  
Then an inspection of \eqref{eq:kell0} shows the second limit superior on the right-hand side there can be dropped. The rest of the argument in (2$\cc$), especially \eqref{eq:kell3} and \eqref{eq:kell4}, can be adapted accordingly to get
the required identity. \medskip
 
\noindent (3$\cc$) The proof is done again by improving the argument for \eqref{eq:kell3} and \eqref{eq:kell4}, but now using \eqref{id:JK} with $\ell=2$. In doing so, we also use the following implication of \eqref{cond:q2}:
\[
\lim_{n\to\infty}\sup_{s\geq 0}\gamma_n\nu_n(\1)\big|B^{(n)}_2\P^{(n)}(K_2>s)-q^{(\infty),2}\P^{(n)}(J_1>s)\big|=0,
\]
which follows since the distributions of $K_2$ and $J_1$ differ by the initial conditions. \medskip 

\noindent (4$\cc$) By the definitions in \eqref{def:kell}--\eqref{def:|||}, we have
\begin{align*}
&\eqspace (\ok_{(\ell_1,\ell_2)|\ell_0}-\ok_{\ell_0|\ell_1|\ell_2})+(\ok_{(\ell_0,\ell_1)|\ell_2}-\ok_{\ell_0|\ell_1|\ell_2})+\ok_{\ell_0|\ell_1|\ell_2}\\
&=\lim_{n\to\infty}2\gamma_n\nu_n(\1)\P^{(n)}(M_{U_{\ell_0},U_{\ell_1}}>s_n t,M_{U_{\ell_1},U_{\ell_2}}\leq s_n t,M_{U_{\ell_0},U_{\ell_2}}>s_n t)\\
&\quad\, +\lim_{n\to\infty}2\gamma_n\nu_n(\1) \P^{(n)}(M_{U_{\ell_0},U_{\ell_1}}\leq s_n t,M_{U_{\ell_1},U_{\ell_2}}> s_n t,M_{U_{\ell_0},U_{\ell_2}}>s_n t)\\
&\quad \,+\lim_{n\to\infty}2\gamma_n\nu_n(\1)\P^{(n)}(M_{U_{\ell_0},U_{\ell_1}}>s_n t,M_{U_{\ell_1},U_{\ell_2}}>s_nt,M_{U_{\ell_0},U_{\ell_2}}>s_nt)\\
&=\lim_{n\to\infty}2\gamma_n\nu_n(\1)\P^{(n)}(M_{U_{\ell_0},U_{\ell_2}}>s_n t)=\ok_{|\ell_2-\ell_0|}.
\end{align*}
Here, the next to the last equality follows since on $\{M_{U_{\ell_0},U_{\ell_2}}>s_n t\}$, we cannot have both $M_{U_{\ell_0},U_{\ell_1}}\leq s_n t$ and $M_{U_{\ell_1},U_{\ell_2}}\leq s_n t$ by the coalescence of the Markov chains, and the last equality follows from the stationarity of the chain $\{U_\ell\}$. The proof is complete.
\end{proof}

We close this subsection with another application of Lemma~\ref{lem:MT}. It will be used in Section~\ref{sec:eqn}.

\begin{prop}\label{prop:Mcompare}
Let $s_0\in (2,\infty)$.
For all integers $\ell\geq 1$ and all $t\in (0,\infty)$, it holds that 
\begin{align}\label{ineq:Cell}
\int_0^{t} \P(M_{U_0,U_\ell}>s_0s)\d s\leq \sum_{j=1}^\ell \prod_{k=1}^{j-1} \big(1-\e^{-2^{k+1}t}\big)^{-1}\int_0^{2j t} \P(M_{U_0,U_1}>s_0s)\d s,
\end{align}
where $\prod_{k=i}^ja_k\equiv 1$ for $j<i$. 
\end{prop}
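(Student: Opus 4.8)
The plan is to turn the exact renewal-type recursion of Lemma~\ref{lem:MT} into a one-step inequality that \emph{lowers} the index $\ell$, and then to iterate it after passing to integrated tails. First I would apply Lemma~\ref{lem:MT} with $\ell$ replaced by $\ell-1$ and solve the resulting identity for the weighted integral of $\P(M_{U_0,U_\ell}>\cdot)$. Dropping the nonpositive contribution $-\e^{-2t}\P(U_0\neq U_{\ell-1})$ and bounding the diagonal correction by $q^{\ell-1}(x,x)\leq 1$, together with the identity
\[
\sum_{x,y\in E}\pi(x)q(x,y)\P(M_{x,y}>s)=\P(M_{U_0,U_1}>s),
\]
which holds because $(U_0,U_1)$ has law $\pi(x)q(x,y)$ and is independent of the coalescing chains, this produces the engine of the argument:
\[
\int_0^t 2\e^{-2(t-s)}\P(M_{U_0,U_\ell}>s)\,\d s\leq \P(M_{U_0,U_{\ell-1}}>t)+\int_0^t 2\e^{-2(t-s)}\P(M_{U_0,U_1}>s)\,\d s.
\]

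Next I would pass to the integrated tails $\Psi_m(a)\defeq\int_0^a\P(M_{U_0,U_m}>s_0 s)\,\d s$. Rescaling time by $s_0$ and integrating the previous display in $t$ over an interval $[0,t+h]$, Fubini's theorem converts each exponentially weighted integral into one of the form $\int_0^{t+h}\P(M_{U_0,U_m}>s_0 u)\,(1-\e^{-2s_0(t+h-u)})\,\d u$. Bounding this weight by $1$ on the right-hand side, and from below by $1-\e^{-2s_0 h}$ on the sub-interval $u\in[0,t]$ of the left-hand side (using that $s\mapsto\P(M_{U_0,U_\ell}>s)$ is nonincreasing), I obtain, for every $h>0$, a flexible one-step recursion
\[
\Psi_\ell(t)\leq (1-\e^{-2s_0 h})^{-1}\big[\Psi_{\ell-1}(t+h)+\Psi_1(t+h)\big].
\]

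Finally I would iterate this recursion by induction on $\ell$, the base case being $\Psi_1(t)\leq\Psi_1(2t)$ from monotonicity of $\Psi_1$. Each application lowers the index by one, peels off a fresh $\Psi_1$ term at the current endpoint, and accumulates one factor $(1-\e^{-2s_0 h})^{-1}$; the freedom in $h$ at each step is what must be exploited to realize the precise endpoints $2jt$ and the precise product $\prod_{k=1}^{j-1}(1-\e^{-2^{k+1}t})^{-1}$ in the statement. This bookkeeping is where I expect the real difficulty to lie: one must choose the step sizes so that the successive integration limits advance to $2t,4t,\dots,2\ell t$ while the accumulated exponents sharpen to $2^{k+1}t$, and reconcile these two competing requirements simultaneously. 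The hypothesis $s_0>2$ enters exactly here, since $2s_0>2^{2}$ lets each rescaled factor $(1-\e^{-2s_0 h})^{-1}$ be dominated by the clean factor $(1-\e^{-2^{k+1}t})^{-1}$, while monotonicity of $\Psi_1$ absorbs the mismatch between the endpoints generated by the recursion and those in the target. Particular care is also needed at the terminal step, where the factor $\Psi_{\ell-1}$ has reached index $1$ and contributes the last summand directly rather than being recursed further.
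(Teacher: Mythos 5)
Your proposal is essentially the paper's own proof. Your ``engine'' is exactly \eqref{Mcompare:1}: the paper likewise rearranges Lemma~\ref{lem:MT}, discards the nonpositive term $-\e^{-2t}\P(U_0\neq U_\ell)$, and bounds the diagonal correction via $q^{\ell}(x,x)\leq 1$ together with the fact that $(U_0,U_1)$ has law $\pi(x)q(x,y)$; and your Fubini step is the paper's \eqref{Mcompare:2}, which is precisely your flexible recursion specialized to $h=t$, i.e.\ to doubling. That specialization also dissolves the bookkeeping difficulty you flag: the induction hypothesis is an inequality valid for \emph{all} $t$ and is applied at time $2t$, so the integration endpoints and the exponents in the product advance in lockstep under the single rescaling $t\mapsto 2t$ (endpoints $2^{j}\cdot 2t=2^{j+1}t$, factors $(1-\e^{-2^{k+1}\cdot 2t})^{-1}=(1-\e^{-2^{k+2}t})^{-1}$); there are no two competing requirements to reconcile, and $s_0>2$ enters once per step exactly as you say, to dominate $(1-\e^{-2s_0t'})^{-1}$ by $(1-\e^{-4t'})^{-1}$ at the current time $t'$. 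One caveat confirms rather than contradicts your suspicion: this recursion yields \emph{geometric} endpoints $2t,4t,8t,\dots,2^{\ell}t$, not arithmetic ones; if you insist on arithmetic steps of size $2t$ to hit $2jt$, the per-step factor is only $(1-\e^{-4s_0t})^{-1}$, which for $s_0$ close to $2$ eventually exceeds the stated factors $(1-\e^{-2^{k+1}t})^{-1}$, so the literal \eqref{ineq:Cell} is not reachable along your route. That is no defect of yours: the paper's own display \eqref{Mcompare:final} produces $\int_0^{2^{j}t}$, so ``$2jt$'' in \eqref{ineq:Cell} should be read as a typo for $2^{j}t$ (the two agree for $j=1,2$, and the discrepancy is immaterial where the proposition is used, e.g.\ in Lemma~\ref{lem:UVbdd}, which only needs a bound of order $t$ for fixed $\ell$). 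Two minor points: your lower bound on $[0,t]$ needs only positivity of the integrand and monotonicity of the weight $u\mapsto 1-\e^{-2s_0(t+h-u)}$, not monotonicity of the tail; and the unrolling (yours, or the paper's final division by $(1-\e^{-4t})$) leaves the $j=1$ summand with a harmless extra factor $(1-\e^{-4t})^{-1}$, a slack the paper's own induction shares.
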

\begin{proof}
We prove \eqref{ineq:Cell} by an induction on $\ell\geq 1$. The inequality is obvious for all $t\in (0,\infty)$ if $\ell=1$.

Suppose that for some $\ell\geq 1$, \eqref{ineq:Cell} holds for all $t\in (0,\infty)$. By \eqref{eq:shifttime0} with $t$ replaced by $s_0r$, 
\begin{align}\label{Mcompare:1}
\begin{split}
&\eqspace\int_0^{r} 2s_0 \e^{-2s_0 (r-s)}\P(M_{U_0,U_{\ell+1}}>s_0s)\d s\\
&\leq 
\P(M_{U_0,U_\ell}>s_0 r)+\int_0^{ r}2s_0 \e^{-2s_0 ( r-s)}\P(M_{U_0,U_1}>s_0s)\d s.
\end{split}
\end{align}
The assumption $s_0\in (2,\infty)$ gives $t\leq 2t(1-s_0^{-1})$, and so, for $h$ nonnegative and Borel measurable, 
\begin{align}
\begin{split}\label{Mcompare:2}
 &\eqspace (1-\e^{-4t})\int_0^{t}h(s_0 s)\d s\leq \int_0^{2t(1-s_0^{-1})}h(s_0 s)(1-\e^{-4t})\d s\\
 &\leq \int_0^{2t} h(s_0 s)\big(1-\e^{-2s_0(2t-s)}\big)\d s=\int_0^{2t}\int_0^r 2s_0 \e^{-2s_0(r- s)}h(s_0 s)\d s\d r \\&\leq \int_0^{2t} h(s_0s)\d s.
 \end{split}
\end{align}
Integrating both sides of \eqref{Mcompare:1} over $[0,2t]$ and applying the first and last inequalities in \eqref{Mcompare:2} give
\begin{align}
&\eqspace(1-\e^{-4t})
\int_0^{t} \P(M_{U_0,U_{\ell+1}}>s_0 s)\d s\notag\\
&\leq 
\int_0^{2t} \P(M_{U_0,U_\ell}>s_0 s)\d s+\int_0^{2t} \P(M_{U_0,U_1}>s_0 s)\d s \notag\\
&\leq \sum_{j=1}^\ell \prod_{k=1}^{j-1} \big(1-\e^{-2^{k+2}t}\big)^{-1}\int_0^{2^{j+1} t} \P(M_{U_0,U_1}>s_0s)\d s +\int_0^{2t} \P(M_{U_0,U_1}>s_0 s)\d s\notag\\
&\leq \sum_{j=2}^{\ell+1} \prod_{k=2}^{j-1} \big(1-\e^{-2^{k+1}t}\big)^{-1}\int_0^{2^{j} t} \P(M_{U_0,U_1}>s_0s)\d s +\int_0^{2t} \P(M_{U_0,U_1}>s_0 s)\d s,\label{Mcompare:final}
\end{align}
where the second inequality follows from induction. Dividing both sides of \eqref{Mcompare:final} by $(1-\e^{-4t})$ proves \eqref{ineq:Cell} for $\ell$ replaced by $\ell+1$. Hence, \eqref{ineq:Cell} holds for all $\ell\geq 1$ by induction. 
\end{proof}

\section{Convergence of the vector density processes}\label{sec:eqn}
We present the proofs of Theorem~\ref{thm:main} and Corollary~\ref{cor:symmetric} in this section. The key result is Proposition~\ref{prop:duhamel} where we reduce the evolutionary game model to the voter model.  {\bf Throughout this section, 
  conditions (a)--(d) of Theorem~\ref{thm:main} are in force.}

 The other settings for this section are as follows. First, we write $I^{(n)}_\sigma=I_\sigma(\theta_n t)$ for the process $I_\sigma(t)$ defined by \eqref{def:I}, when the underlying particle system is based on $(E_n,q^{(n)})$. This notation extends to the other processes in the decompositions \eqref{psigma:dec} by using the same time change. Next, recall that $S$ denotes the type space.  We will mostly consider  $(\sigma_0,\sigma_2,\sigma_3)\in S\times S\times S$ such that $\sigma_0\neq \sigma_2$. These triplets fit into the context of \eqref{eq:Dsigma}, from which we will prove the limiting replicator equation in Theorem~\ref{thm:main}. Additionally, given an admissible sequence $(\theta_n,\mu_n,w_n)$ such that $\lim_n \theta_n/\gamma_n=0$, we can choose a slow sequence $(s_n)$ (recall Definition~\ref{def:slow}) such that 
\begin{align}
\lim_{n\to\infty}\frac{s_n}{\theta_n}=0.\label{sn:adm}
\end{align}

\subsection{Asymptotic closure of equations and path regularity}\label{sec:closure}
We begin by showing that the leading order drift term $I_\sigma^{(n)}$ in \eqref{psigma:dec} can be asymptotically closed by the vector density process $(p_{\sigma}(\xi_{\theta_nt});\sigma\in S)$. By \eqref{def:I}, this term takes the following explicit form: 
\begin{align}\label{def:In}
\begin{split}
I^{(n)}_\sigma(t)&=w_n\theta_n\int_0^t \overline{D}_\sigma(\xi_{\theta_n s})
\d s\\
&+\int_0^t \Bigg(\theta_n\mu_n(\sigma)[1-p_{\sigma}(\xi_{\theta_ns})]-\theta_n\mu_n(S\setminus\{\sigma\}) p_\sigma(\xi_{\theta_ns})\Bigg)\d s.
\end{split}
\end{align}
Specifically, in terms of the explicit form of $\overline{D}_\sigma$ in \eqref{eq:Dsigma}, our goal is to prove that 
\begin{align}\label{closure}
\lim_{n\to\infty}\sup_{\xi\in S^{E_n}}\E^{w_n}_{\xi}\left[\left|\int_0^tw_n\theta_n \overline{f_{\sigma_0} f_{\sigma_2\sigma_3}}(\xi_{\theta_ns})-w_\infty Q_{\sigma_0,\sigma_2\sigma_3}\big(p(\xi_{\theta_ns })\big)\d s \right|\right]=0,
\end{align}
where $\sigma_0\neq \sigma_2$, $w_\infty$ is defined by \eqref{def:wn}, and $Q_{\sigma_0,\sigma_2\sigma_3}(X)$ is a polynomial in $X=(X_\sigma)_{\sigma\in S}$ defined by
\begin{align}
\begin{split}
\textcolor{black}{Q_{\sigma_0,\sigma_2\sigma_3}(X)}&\defeq\1_{\{\sigma_2=\sigma_3\}}(\ok_{(2,3)|0}-\overline{\kappa}_{0|2|3})X_{\sigma_0}X_{\sigma_2}\\
&\quad +\1_{\{\sigma_0=\sigma_3\}}(\overline{\kappa}_{(0,3)|2}-\overline{\kappa}_{0|2|3})X_{\sigma_0}X_{\sigma_2}\\
&\quad +\overline{\kappa}_{0|2|3}X_{\sigma_0}X_{\sigma_2}X_{\sigma_3}.
\end{split}
\label{def:Qsigma}
\end{align}
The choice of $Q_{\sigma_0,\sigma_2\sigma_3}$ is due to the proof of Lemma~\ref{lem:RW}.

The proof of \eqref{closure} begins with an inequality central to the proof of \cite[Theorem~2.2]{CCC}, which goes back to \cite{CMP} and is also central to the proof of \cite[Lemma~4.2]{CC}. This inequality is presented in a general form  for future references. In what follows, we write $a\wedge b$ for $\min\{a,b\}$.

\begin{prop}\label{prop:L2}
Given a Polish space $E_0$ and $T\in (0,\infty)$, let $(X_t)_{0\leq t\leq T}$ be an $E_0$-valued Markov process with c\'adl\'ag paths. Let $f$ and $g$ be bounded Borel measurable functions defined on $E_0$. Suppose that $x\mapsto \E_x[f(X_t)]$ is Borel measurable, and for some bounded decreasing function $a(t)$, 
\begin{align}\label{def:a(t)}
\sup_{x\in E_0}\E_x[|f(X_t)|]\leq  a(t),\quad \forall\;t\in [0,T]. 
\end{align}
Then for all $0<2\delta<t\leq T$,
\begin{align}\label{ineq:L2}
\begin{split}
&\sup_{x\in E_0}\E_x\left[\left|\int_0^t\big(f(X_s)-g(X_s)\big)\d s\right|\right]\\
& \leq \int_0^\delta a(s)\d s + \left(8ta(\delta)\int_0^\delta a(s)\d s\right)^{1/2}+3\delta \|g\|_\infty+t\sup_{x\in E_0}\big|\E_x[f(X_\delta)]-g(x)\big|.
\end{split}
\end{align}
\end{prop}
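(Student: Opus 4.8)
The plan is to bound the time-average of $f(X_s)$ against $g(X_s)$ by splitting the integral at the small scale $\delta$ and controlling the bulk via a second-moment (decorrelation) estimate exploiting the Markov property. First I would write
\begin{align*}
\int_0^t \big(f(X_s)-g(X_s)\big)\d s
=\int_0^\delta f(X_s)\d s
+\int_\delta^t \big(\E_{X_{s-\delta}}[f(X_\delta)]-g(X_s)\big)\d s
+\int_\delta^t \big(f(X_s)-\E_{X_{s-\delta}}[f(X_\delta)]\big)\d s,
\end{align*}
where I have subtracted and added the conditional expectation $\E_{X_{s-\delta}}[f(X_\delta)]=\E[f(X_s)\mid \F_{s-\delta}]$ obtained from the Markov property applied at time $s-\delta$. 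The first piece is bounded in absolute value by $\int_0^\delta a(s)\d s$ using \eqref{def:a(t)}. For the second piece, I would estimate the integrand pointwise: for each $s$, $|\E_{X_{s-\delta}}[f(X_\delta)]-g(X_{s})|\leq |\E_{X_{s-\delta}}[f(X_\delta)]-g(X_{s-\delta})|+|g(X_{s-\delta})-g(X_s)|$. The first term is at most $\sup_x|\E_x[f(X_\delta)]-g(x)|$; integrating over $[\delta,t]$ yields the factor $t$ in front of that supremum in \eqref{ineq:L2}. The remaining discrepancy $|g(X_{s-\delta})-g(X_s)|$ should be absorbed into the $3\delta\|g\|_\infty$ term, together with the contribution of $\int_\delta^t g(X_{s-\delta})\d s$ versus $\int_0^t g(X_s)\d s$ (a shift by $\delta$ affecting intervals of length $\delta$ near each endpoint), so a bit of care with the bookkeeping of the $g$-terms is needed to land exactly the coefficient $3\delta$.

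The crux is the third piece, the martingale-like fluctuation term
\begin{align*}
\Delta\defeq\int_\delta^t \big(f(X_s)-\E[f(X_s)\mid\F_{s-\delta}]\big)\d s,
\end{align*}
which I would bound in $L^2$ by the Cauchy--Schwarz inequality: $\E_x[|\Delta|]\leq (\E_x[\Delta^2])^{1/2}$. Expanding the square gives a double integral of the covariance-type quantity $\E_x[\phi_s\phi_r]$ with $\phi_s=f(X_s)-\E[f(X_s)\mid\F_{s-\delta}]$. For $|s-r|\geq \delta$, say $r\le s-\delta$, the inner increment $\phi_s$ is conditionally centered given $\F_{s-\delta}\supseteq \F_r$, so $\E_x[\phi_s\phi_r]=\E_x[\phi_r\,\E[\phi_s\mid\F_{s-\delta}]]=0$; hence only the diagonal strip $|s-r|<\delta$ survives. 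On that strip I would control each factor crudely: $|\phi_s|\leq |f(X_s)|+\E[|f(X_s)|\mid\F_{s-\delta}]$, and after conditioning on the earlier time and using \eqref{def:a(t)} together with the Markov property one gets $\E_x[|\phi_s\phi_r|]\lesssim a(\delta)a(|s-r|)$ (the $a(\delta)$ from the gap to the conditioning time, the $a(|s-r|)$ from the closer pair). Integrating this over the strip $\{|s-r|<\delta,\ s,r\le t\}$ produces $\E_x[\Delta^2]\leq 8t\,a(\delta)\int_0^\delta a(s)\d s$, giving the square-root term in \eqref{ineq:L2}.

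The main obstacle is precisely this second-moment estimate: one must organize the Markov conditioning so that the orthogonality for time gaps exceeding $\delta$ is genuinely exact (requiring $f(X_s)-\E[f(X_s)\mid \F_{s-\delta}]$ to be centered against \emph{all} of $\F_r$ for $r\le s-\delta$), and then bound the surviving near-diagonal contributions using only the hypothesis \eqref{def:a(t)} — note that $a$ is the bound on $\sup_x\E_x[|f(X_t)|]$, not on $\|f\|_\infty$, so the decay of $a$ near the diagonal is what must be leveraged, via conditioning the closer of the two times on the filtration at the other. Getting the constant $8$ (rather than a larger universal constant) requires accounting for both orderings $r<s$ and $s<r$ and the factor-of-two from $|\phi_s|\le |f(X_s)|+\E[|f(X_s)|\mid\F_{s-\delta}]$; I would track these factors carefully but not belabor them, since they are routine once the orthogonality structure is in place.
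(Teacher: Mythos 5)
Your proposal follows essentially the same route as the paper's proof: the identical add-and-subtract of $\E_{X_{s-\delta}}[f(X_\delta)]$ (justified by the Markov property at time $s-\delta$), Cauchy--Schwarz on the fluctuation term, exact orthogonality $\E_x[H(r)H(s)]=0$ for $|s-r|\geq \delta$, and the near-diagonal bound of order $a(\delta)a(|s-r|)$ (the paper gets $a(r)a(s-r)+a(r)a(\delta)+a(\delta)a(s)+a(\delta)^2\leq 4a(\delta)a(s-r)$), integrated over the strip of width $\delta$ to yield $8ta(\delta)\int_0^\delta a(s)\d s$. The only blemish is that your displayed decomposition omits the term $-\int_0^\delta g(X_s)\d s$, which supplies one of the three $\delta\|g\|_\infty$'s (the shift $\int_\delta^t\big(g(X_{s-\delta})-g(X_s)\big)\d s$ supplying the other $2\delta\|g\|_\infty$ after telescoping), but your own remark about the $g$-bookkeeping covers exactly this point.
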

\begin{proof}
For $s\geq \delta$, define $H(s)=f(X_s)-\E_{X_{s-\delta}}[f(X_\delta)]$. Then
\begin{align}
&\quad \E_x\left[\left|\int_0^t\big(f(X_s)-g(X_s)\big)\d s\right|\right]\notag\\
&\leq 
\int_0^\delta \big(\E_x[|f(X_s)|]+\|g\|_\infty\big) \d s+\E_x\left[\left(\int_\delta^t H(s)\d s\right)^2\right]^{1/2}\notag\\
&\quad\,+\E_x\left[\int_\delta^t|
\E_{X_{s-\delta}}[f(X_\delta)]-g(X_{s-\delta})|\d s \right]
+\E\left[\left|\int_\delta^t \big(g(X_{s-\delta})-g(X_s)\big)\d s\right|\right]\notag\\
\begin{split}
&\leq \int_0^\delta a(s)\d s +\E_x\left[\left(\int_\delta^t H(s)\d s\right)^2\right]^{1/2}+3\delta \|g\|_\infty\\
&\quad + t\sup_{x\in E_0}\big|\E_x[f(X_\delta)]-g(x)\big|.\label{H0}
\end{split}
\end{align}
Note that in the last inequality, $2\delta\|g\|_\infty$ is contributed by the integral $\int_\delta^t \big(g(X_{s-\delta})-g(X_s)\big)\d s$.

To bound the second term in \eqref{H0}, we note that for $\delta\leq r<s-\delta$, 
\begin{align*}
\E_x[H(s)H(r)]
&=\E_x\Big[f(X_s)\Big(f(X_r)-\E_{X_{r-\delta}}[f(X_\delta)]\Big)\Big]\\
&\quad\,-\E_x\Big[\E_{X_{s-\delta}}[f(X_\delta)]
\Big(f(X_r)-\E_{X_{r-\delta}}[f(X_\delta)]\Big)\Big]
=0,
\end{align*}
where the last equality follows by applying the Markov property at time $s-\delta$ to the first expectation on the right-hand side of the first equality. Hence,
\begin{align}
 \E_x\left[\left(\int_\delta^t H(s)\d s\right)^2\right]&=2\int_\delta^t \int_r^{t\wedge (r+\delta)}\E_x[H(r)H(s)]\d s \d r,\label{H1}
\end{align}
whereas for $r\leq s\leq r+\delta$, 
\begin{align}
&\quad \E_x[H(r)H(s)]\notag\\
&=\E_x[f(X_r)f(X_s)]-\E_x\big[f(X_r)\E_{X_{s-\delta}}[f(X_\delta)]\big]\notag\\
&\quad -\E_x\big[\E_{X_{r-\delta}}[f(X_\delta)]f(X_s)\big] +\E_x\big[\E_{X_{r-\delta}}[f(X_\delta)]\E_{X_{s-\delta}}[f(X_\delta)]\big]\notag\\
&\leq a(r)a(s-r)+a(r)a(\delta)+a(\delta)a(s)+a(\delta)^2\label{H2-0}
\end{align}
by \eqref{def:a(t)} and the Markov property.
Since $a$ is decreasing, integrating the terms in the last line yields
\begin{align}
&\eqspace 2\int_\delta^t \int_r^{t\wedge (r+\delta)}\big(a(r)a(s-r)+a(r)a(\delta)+a(\delta)a(s)+a(\delta)^2\big)\d s\d r\notag\\
&\leq 2ta(\delta)\int_0^\delta a(s)\d s+2a(\delta)^2t \delta +2ta(\delta)\int_0^\delta a(s)\d s+2a(\delta)^2 t\delta\notag\\
&\leq 8ta(\delta)\int_0^\delta a(s)\d s.\label{H2}
\end{align}
Applying \eqref{H1}--\eqref{H2} to \eqref{H0}, we get \eqref{ineq:L2}.  
\end{proof}

To prove \eqref{closure}, we apply Proposition~\ref{prop:L2} with the following choice:
\begin{align}\label{setup}
\begin{split}
X_t&=\xi_{\theta_nt} \mbox{ under }\P^{w_n}_{\xi},\\
 \delta&=\delta_n=2s_n/\theta_n,\\
f&=f_n=w_n\theta_n \overline{f_{\sigma_0} f_{\sigma_2\sigma_3}},\\
 g&=g_n=w_\infty Q_{\sigma_0,\sigma_2\sigma_3}\circ p.
\end{split}
\end{align}
The next two results are used to identify the appropriate $a(t)=a_n(t)$ that satisfies \eqref{def:a(t)}. We recall for the last time that conditions (a)--(d) of Theorem~\ref{thm:main} are in force throughout Section~\ref{sec:eqn}.

\begin{lem}\label{lem:UVbdd}
Let $s_0\in (2,\infty)$. Then for any  $t\in(0,\infty)$ and integer $\ell\geq 1$, 
\begin{align}
\begin{split}
&2\gamma_n\nu_n(\1)\int_0^{t}\P^{(n)}(M_{U_0,U_\ell}>s_0 s)\d s\\
\leq & 
C_{\ref{UVbdd:1}}\Bigg(\sum_{j=1}^\ell \prod_{k=1}^{j-1} \big(1-\e^{-2^{k+1}t}\big)^{-1}\Bigg)\left(\frac{\pi^{(n)}_{\max}}{\pi^{(n)}_{\min}}\right) \left[\ell t+\min\Bigg\{ \frac{1}{\mathbf g_ns_0}, \frac{\mathbf t^{(n)}_{\rm mix}}{s_0}[1+\log^+(\gamma_n/\mathbf t^{(n)}_{\rm mix})]\Bigg\}\right],\label{UVbdd:1}
\end{split}
\end{align}
where $C_{\ref{UVbdd:1}}$ is a universal constant.
\end{lem}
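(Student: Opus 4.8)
```latex
\textbf{Plan of proof.}
The goal is to bound the time-integrated tail $\int_0^t \P^{(n)}(M_{U_0,U_\ell}>s_0 s)\d s$, multiplied by the normalizing factor $2\gamma_n\nu_n(\1)$, in terms of quantities we already control for $\ell=1$. The natural starting point is Proposition~\ref{prop:Mcompare}, which already reduces the $\ell$-th integral to a finite sum of $M_{U_0,U_1}$-integrals over dilated intervals $[0,2^j t]$ with the explicit prefactors $\prod_{k=1}^{j-1}(1-\e^{-2^{k+1}t})^{-1}$. Thus, after multiplying through by $2\gamma_n\nu_n(\1)$, it suffices to obtain, for each $j$, a bound of the form
\[
2\gamma_n\nu_n(\1)\int_0^{2^j t}\P^{(n)}(M_{U_0,U_1}>s_0 s)\d s\leq C\left(\frac{\pi^{(n)}_{\max}}{\pi^{(n)}_{\min}}\right)\Bigg[2^j t+\min\Bigg\{\frac{1}{\mathbf g_n s_0},\frac{\mathbf t^{(n)}_{\rm mix}}{s_0}[1+\log^+(\gamma_n/\mathbf t^{(n)}_{\rm mix})]\Bigg\}\Bigg],
\]
which after summing against the prefactors yields \eqref{UVbdd:1}. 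So the real work is a single-scale estimate for $M_{U_0,U_1}$.

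\textbf{Reduction to $M_{U,U'}$ and exponential approximation.}
The first step in the single-scale bound is to replace $M_{U_0,U_1}$ by the meeting time $M_{U,U'}$ of two chains started from the \emph{independent} stationary pair $(U,U')\sim\pi\otimes\pi$, at the cost of the factor $\pi^{(n)}_{\max}/\pi^{(n)}_{\min}$. This is exactly the comparison \eqref{ineq:MUVcompare} (applied to the event $\Gamma=(s_0 s,\infty)$ and integrated in $s$); it accounts for the ratio $\pi^{(n)}_{\max}/\pi^{(n)}_{\min}$ appearing on the right of \eqref{UVbdd:1}. Next I would invoke the almost-exponentiality of $M_{U,U'}$: since $\E^{(n)}[M_{U,U'}]\approx\gamma_n$ and $2\gamma_n\nu_n(\1)=\big(\nu_n(\1)^{-1}(2\gamma_n)^{-1}\big)^{-1}\cdot(\text{const})$ is the correct normalization (cf. \eqref{ergodic}), the tail $\P^{(n)}(M_{U,U'}>u)$ is close to $\e^{-u/\gamma_n}$, with an additive error controlled either by the spectral gap through $2/(\widetilde{\mathbf g}_n u)=4/(\mathbf g_n u)$ as in \eqref{eq:sn-selection1}, or by the mixing time through $C\,\widetilde{\mathbf t}^{(n)}_{\rm mix}u^{-1}[1+\log^+(\gamma_n/\widetilde{\mathbf t}^{(n)}_{\rm mix})]$ as in \eqref{eq:sn-selection2}, using the bounds from \cite[Proposition~3.23]{AF:MC} and \cite[Theorem~1.4]{Aldous:AE} together with $\widetilde{\mathbf g}_n=\mathbf g_n/2$ and \eqref{compare:mix}. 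Integrating the exponential main term $\e^{-s_0 s/\gamma_n}$ over $s\in[0,2^j t]$ and multiplying by $2\gamma_n\nu_n(\1)$ produces (up to constants) the term $2^j t$; integrating the error term produces the $\min\{\cdot,\cdot\}/s_0$ contribution, after replacing $u=s_0 s$ and noting that the error bounds are integrable in $s$ against $\d s$ on $[0,2^j t]$ only through their supremal size, which is captured by the factor $s_0^{-1}$ once the lower cutoff is handled.

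\textbf{Main obstacle.}
The step requiring the most care is the integration of the exponential-approximation error. Both the spectral-gap bound $\propto 1/u$ and the mixing-time bound $\propto 1/u$ blow up as $u\downarrow 0$, so they are \emph{not} integrable against $\d s$ near $s=0$; one cannot simply integrate the pointwise tail estimate. The resolution is to split the integral at a scale comparable to $s_0^{-1}$ (or to use that for small $s$ the trivial bound $\P^{(n)}(M_{U,U'}>s_0 s)\le 1$ together with the prefactor $2\gamma_n\nu_n(\1)$ and the $\ell t$-type contribution already absorbs the small-$s$ region), so that the remaining integral of the $u^{-1}$ error over $u\gtrsim \mathrm{const}$ yields precisely the $\min\{1/(\mathbf g_n s_0),\,\mathbf t^{(n)}_{\rm mix}s_0^{-1}[1+\log^+(\cdots)]\}$ term with the correct $s_0^{-1}$ scaling. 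Organizing this cutoff cleanly, while keeping all constants universal (i.e. independent of $n$ and of the particular $q^{(n)}$-chain), and verifying that the sum over $j$ of the prefactors $\prod_{k=1}^{j-1}(1-\e^{-2^{k+1}t})^{-1}$ converges and reproduces the stated bracketed sum in \eqref{UVbdd:1}, is the part I expect to demand the most bookkeeping. The remaining algebra—combining the comparison factor, the main term, and the error term, then summing via Proposition~\ref{prop:Mcompare}—is routine.
```
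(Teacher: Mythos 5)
There is a genuine gap, and it sits exactly where you placed your ``reduction'' step. The comparison \eqref{ineq:MUVcompare} relates $M_{U_0,U_1}$ to $M_{V,V'}$, where $(V,V')$ has the \emph{neighbor-pair} law $\pi(x)^2q(x,y)/\nu(\1)$ --- not to $M_{U,U'}$ with $(U,U')\sim\pi\otimes\pi$ as you assert. No $\pi^{(n)}_{\max}/\pi^{(n)}_{\min}$-comparison between $(U_0,U_1)$ and $\pi\otimes\pi$ can hold: the density ratio is $q(x,y)/\pi(y)$, which is unbounded on sparse structures (e.g.\ $q(x,y)=1/k$, $\pi(y)=1/N$). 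Moreover the substitution is quantitatively fatal, not just misattributed: $\P^{(n)}(M_{U,U'}>u)$ stays of order one up to times of order $\gamma_n$, so $\int_0^{t}\P^{(n)}(M_{U,U'}>s_0 s)\d s\approx \tfrac{\gamma_n}{s_0}\big(1-\e^{-s_0t/\gamma_n}\big)$, and after multiplying by $2\gamma_n\nu_n(\1)$ your main term carries the extra factor $2\gamma_n\nu_n(\1)$, which diverges for recurrent structures (e.g.\ two-dimensional tori, where $\gamma_n\nu_n(\1)\sim C\log N_n$). The bound \eqref{UVbdd:1} has no such factor; the tail of $M_{V,V'}$ is genuinely smaller than that of $M_{U,U'}$ by precisely this factor, and that is the whole point. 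The paper's mechanism, which your sketch only gestures at (``cf.\ \eqref{ergodic}''), is to use the duality identity \eqref{ergodic} to convert the integrated $M_{V,V'}$-tail \emph{exactly} into a tail increment of $M_{U,U'}$: $2\nu_n(\1)s_0\int_0^{t}\P^{(n)}(M_{V,V'}>s_0s)\d s=\P^{(n)}(M_{U,U'}>0)-\P^{(n)}(M_{U,U'}>s_0t)$. A single application of the almost-exponentiality bounds to this increment gives $\big(1-\e^{-s_0t/\gamma_n}\big)+2\epsilon_n\le s_0t/\gamma_n+2\epsilon_n$, and multiplying by $\gamma_n/s_0$ produces the $t$-term and the $\min\{\cdot,\cdot\}/s_0$ term with the normalization exactly absorbed.

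Your ``main obstacle'' is also a misreading of the cited results. \cite[Proposition~3.23]{AF:MC} and \cite[Theorem~1.4]{Aldous:AE} give errors that are \emph{uniform in time}, of size $2/(\mathbf g_n\gamma_n)$ and $C\,\widetilde{\mathbf t}^{(n)}_{\rm mix}\gamma_n^{-1}[1+\log^+(\gamma_n/\widetilde{\mathbf t}^{(n)}_{\rm mix})]$ respectively; the expressions in \eqref{eq:sn-selection1}--\eqref{eq:sn-selection2} are bounds on already-integrated quantities carrying a prefactor $\gamma_n/s_n$, not pointwise tail errors decaying like $1/u$. Consequently there is no non-integrability near $u=0$, no cutoff is needed, and the $s_0^{-1}$ scaling in \eqref{UVbdd:1} comes from the $\gamma_n/s_0$ prefactor in the change of variables, not from integrating a decaying error. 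A small further slip: Proposition~\ref{prop:Mcompare} dilates the intervals to $[0,2jt]$ (linear in $j$), not $[0,2^jt]$; with your exponential dilation the final bracket would read $2^\ell t$ rather than the stated $\ell t$, since the paper bounds each integral by the one over $[0,2\ell t]$ and factors out the sum of products.
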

\begin{proof}
By \eqref{ineq:MUVcompare} and Proposition~\ref{prop:Mcompare}, we obtain the following inequality:
\begin{align}
&\eqspace 2\gamma_n\nu_n(\1)\int_0^{t}\P^{(n)}(M_{U_0,U_\ell}>s_0 s)\d s\notag\\
&\leq\frac{\gamma_n}{s_0}\cdot  \Bigg(\sum_{j=1}^\ell \prod_{k=1}^{j-1} \big(1-\e^{-2^{k+1}t}\big)^{-1}\Bigg)\left(\frac{\pi^{(n)}_{\max}}{\pi^{(n)}_{\min}}\right)
2s_0\nu_n(\1)\int_0^{2\ell t}\P^{(n)}(M_{V,V'}>s_0 s)\d s\notag\\
\begin{split}
&=\frac{\gamma_n}{s_0} \cdot \Bigg(\sum_{j=1}^\ell \prod_{k=1}^{j-1} \big(1-\e^{-2^{k+1}t}\big)^{-1}\Bigg)\left(\frac{\pi^{(n)}_{\max}}{\pi^{(n)}_{\min}}\right)\\
&\quad \times \left[\P^{(n)}(M_{U,U'}>0)-\P^{(n)}(M_{U,U'}>2\ell s_0 t)\right]\label{UVbdd:00}
\end{split}\\
\begin{split}
&\leq C_{\ref{UVbdd:0}}\frac{\gamma_n}{s_0}\cdot \Bigg(\sum_{j=1}^\ell \prod_{k=1}^{j-1} \big(1-\e^{-2^{k+1}t}\big)^{-1}\Bigg)\left(\frac{\pi^{(n)}_{\max}}{\pi^{(n)}_{\min}}\right)\\
&\eqspace\times \left[\big(1-\e^{-2\ell s_0t/\gamma_n}\big)+\min\Bigg\{ \frac{2}{\widetilde{\mathbf g}_n\gamma_n}, \frac{\widetilde{\mathbf t}^{(n)}_{\rm mix}}{\gamma_n}\big[1+\log^+(\gamma_n/\widetilde{\mathbf t}^{(n)}_{\rm mix})\big]\Bigg\}\right] \label{UVbdd:0}
\end{split}
\end{align}
for a universal constant $C_{\ref{UVbdd:0}}$. Here, \eqref{UVbdd:00} follows from \eqref{ergodic}, and \eqref{UVbdd:0} follows from the exponential approximation of $M_{U,U'}$ as in the proof of Proposition~\ref{prop:sn-selection}. Recall the reduction of mixing of products chains to mixing of the coordinates as used in that proposition, and the inequality $1-\e^{-x}\leq x$ holds for all $x\geq 0$. Hence, we obtain \eqref{UVbdd:1} from \eqref{UVbdd:0}. The proof is complete. 
\end{proof}

\begin{prop}\label{prop:duhamel}
Fix $(\sigma_0,\sigma_1,\sigma_2,\sigma_3)\in S\times S\times S$ such that $\sigma_0\neq \sigma_2$ and $\sigma_0\neq \sigma_1$. \medskip 

\noindent {\rm (1$\cc$)}
For any $w\in [0,\overline{w}]$ and $t\in(0,\infty)$, the following estimates of the evolutionary game by the voter model holds: for some constant $C_{\ref{LwL:0}}$ depending only on $\Pi$,
\begin{align}
&\quad 
\sup_{\xi\in S^{E}}\Big|\E^{w}_\xi\big[\,\overline{f_{\sigma_0} f_{\sigma_2\sigma_3}}(\xi_{t})\big]-\E^0_\xi\big[\,\overline{f_{\sigma_0} f_{\sigma_2\sigma_3}}(\xi_{t})\big]\Big|\notag\\
\begin{split}
&\leq C_{\ref{LwL:0}} w\int_0^t \P(M_{U_0,U_2}>s)\d s
+C_{\ref{LwL:0}}w\mu(\1)\int_0^t\int_0^s\P(M_{U_0,U_2}>r)\d r\d s;\label{LwL:0}
\end{split}\\
&\quad 
\sup_{\xi\in S^{E}}\Big|\E^{w}_\xi\big[\,\overline{ f_{\sigma_0\sigma_1}}(\xi_{t})\big]-\E^0_\xi\big[\,\overline{ f_{\sigma_0\sigma_1}}(\xi_{t})\big]\Big|\notag\\
\begin{split}
&\leq C_{\ref{LwL:0}} w\int_0^t \P(M_{U_0,U_1}>s)\d s
+C_{\ref{LwL:0}}w\mu(\1)\int_0^t\int_0^s\P(M_{U_0,U_1}>r)\d r\d s.\label{LwL:001}
\end{split}
\end{align}
 \medskip 

\noindent {\rm (2$\cc$)} For any admissible sequence $(\theta_n,\mu_n,w_n)$ and $T\in(0,\infty)$, it holds that 
\begin{align}\label{LwL:1}
\begin{split}
&\lim_{n\to\infty}\int_0^T\sup_{\xi\in S^{E_n}}\left|\E^{w_n}_{\xi}\left[w_n\theta_n \overline{f_{\sigma_0} f_{\sigma_2\sigma_3}}(\xi_{2s_n t})\right]-\E^{0}_{\xi}\left[w_n\theta_n\overline{f_{\sigma_0} f_{\sigma_2\sigma_3}}(\xi_{2s_n t})\right]\right|\d t=0.
\end{split}
\end{align}
\end{prop}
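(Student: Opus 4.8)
The plan is to deduce \eqref{LwL:1} from the perturbation bound \eqref{LwL:0} of part (1$\cc$), combined with the uniform meeting-time estimate of Lemma~\ref{lem:UVbdd} applied at the growing scale $s_0=s_n$. First I would apply \eqref{LwL:0} with $w=w_n$ and $\mu=\mu_n$, evaluated at time $2s_nt$, then multiply by $w_n\theta_n$ and integrate in $t$ over $[0,T]$. This bounds the left-hand side of \eqref{LwL:1} by $C w_n^2\theta_n\int_0^T\int_0^{2s_nt}\P^{(n)}(M_{U_0,U_2}>s)\,\d s\,\d t$ plus a second term carrying an extra factor $\mu_n(\1)$ and one additional time integration.

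The first key step is to remove the $t$-dependence inside the integrals. Since $s\mapsto\P^{(n)}(M_{U_0,U_2}>s)$ is decreasing, the inner integral $\int_0^{2s_nt}\P^{(n)}(M_{U_0,U_2}>s)\,\d s$ is increasing in $t$, so I would bound it by its value at $t=T$; this turns the outer integration into a harmless factor of $T$ (respectively $T^2$ for the mutation term) and, crucially, avoids the singularity near $t=0$ that would otherwise come from the $t$-dependent prefactor in Lemma~\ref{lem:UVbdd}. After the substitution $s=s_nu$, the surviving integral becomes $\tfrac{s_n}{2\gamma_n\nu_n(\1)}\cdot 2\gamma_n\nu_n(\1)\int_0^{2T}\P^{(n)}(M_{U_0,U_2}>s_nu)\,\d u$, and here I would invoke Lemma~\ref{lem:UVbdd} with $\ell=2$ and $s_0=s_n$ (legitimate for $n$ large since $s_n\to\infty$). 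For fixed $T$ the prefactor there is a finite constant, the ratio $\pi^{(n)}_{\max}/\pi^{(n)}_{\min}$ is bounded by condition (b)/\eqref{cond:pi}, and the $\min\{\cdot\}$ term tends to $0$ by the slow-sequence mixing conditions \eqref{cond2:sn}; hence $2\gamma_n\nu_n(\1)\int_0^{2T}\P^{(n)}(M_{U_0,U_2}>s_nu)\,\d u\le C_T$ uniformly in $n$.

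Assembling these bounds, the two terms are controlled respectively by multiples of $\tfrac{w_n^2\theta_ns_n}{\gamma_n\nu_n(\1)}$ and $\tfrac{w_n^2\theta_n\mu_n(\1)s_n^2}{\gamma_n\nu_n(\1)}$, and I would close the argument using admissibility. Writing $\tfrac{w_n^2\theta_ns_n}{\gamma_n\nu_n(\1)}=\tfrac{w_n\theta_n}{\gamma_n\nu_n(\1)}\cdot w_ns_n$, the first factor is bounded by \eqref{def:wn} (it converges to $2w_\infty$), while $w_ns_n=(w_n\theta_n)(s_n/\theta_n)\to0$ since $w_n\theta_n$ is bounded and $s_n/\theta_n\to0$ by \eqref{sn:adm}. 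Likewise $\tfrac{w_n^2\theta_n\mu_n(\1)s_n^2}{\gamma_n\nu_n(\1)}=\tfrac{w_n\theta_n}{\gamma_n\nu_n(\1)}\cdot(w_n\theta_n)(\mu_n(\1)\theta_n)(s_n/\theta_n)^2\to0$, using in addition $\mu_n(\1)\theta_n\to\mu_\infty(\1)$ from \eqref{def:mun}. Both terms therefore vanish, yielding \eqref{LwL:1}.

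The main obstacle I anticipate is the legitimacy and uniformity of applying Lemma~\ref{lem:UVbdd} at the $n$-dependent scale $s_0=s_n$ rather than a fixed constant: one must verify that the resulting bound stays uniform in $n$ for fixed $T$, which is precisely where condition (b) and the slow-sequence mixing conditions \eqref{cond2:sn} enter. The monotonicity reduction in the first step is what makes this clean, confining all uses of the lemma to the single endpoint $t=T$ and sidestepping the non-integrable $t\to0$ behavior of its prefactor.
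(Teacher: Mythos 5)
Your proposal proves only part (2$\cc$); part (1$\cc$) --- the perturbation bounds \eqref{LwL:0} and \eqref{LwL:001} --- is taken as given, yet it is the substantive content of the proposition and is established nowhere else in the paper. This is a genuine gap: without (1$\cc$) there is no bridge between $\E^{w_n}_\xi$ and $\E^0_\xi$, so your argument for \eqref{LwL:1} has no starting point. The paper's proof of (1$\cc$) runs as follows: by Duhamel's principle \eqref{expansion} with $H=\overline{f_{\sigma_0}f_{\sigma_2\sigma_3}}$, the difference of the two semigroups is $\int_0^t \e^{(t-s)\mathsf L^w}(\mathsf L^w-\mathsf L)\e^{s\mathsf L}H\,\d s$; the generator difference \eqref{LwL1} involves only single-site replacements weighted by $q^w(x,y,\xi)-q(x,y)$, which is bounded by $C w\,q(x,y)$ via the expansion \eqref{qwq:exp} (see \eqref{qwq:bdd}); and the crucial input is the single-site increment bound \eqref{claim:w0} on $\e^{s\mathsf L}H(\xi^x)-\e^{s\mathsf L}H(\xi)$, proved by the pathwise coalescing duality with mutations in Proposition~\ref{prop:mutation} (2$\cc$) --- this is precisely where the hypothesis $\sigma_0\neq\sigma_2$ and the tail probabilities $\P(M_{U_0,U_2}>s)$ (and, for the mutation term, the extra time integral) enter. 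Summing over $x,y$ and using that $\e^{(t-s)\mathsf L^w}$ is a Markov semigroup yields \eqref{LwL:0}; the second bound \eqref{LwL:001}, which is also part of the statement (and is needed later in the proof of Lemma~\ref{lem:tight}), follows the same way from Proposition~\ref{prop:mutation} (3$\cc$). None of these ideas appear in your proposal.

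For the part you do address, your argument is correct and is essentially the paper's own route to (2$\cc$): the paper likewise applies \eqref{LwL:0} at time $2s_nt$, multiplies by $w_n\theta_n$, and reduces \eqref{LwL:1} to the two limits \eqref{problem:wn1}--\eqref{problem:wn2}, which it verifies by Lemma~\ref{lem:UVbdd} with $s_0=2s_n$ together with \eqref{cond:pi}, the mixing conditions \eqref{cond2:sn}, \eqref{sn:adm}, \eqref{def:mun} and \eqref{def:wn}. Your monotonicity reduction to the endpoint $t=T$ and the factorizations $w_n^2\theta_n s_n/(\gamma_n\nu_n(\1))=\big[w_n\theta_n/(2\gamma_n\nu_n(\1))\big]\cdot 2w_ns_n$ with $w_ns_n=(w_n\theta_n)(s_n/\theta_n)\to0$ are sound bookkeeping variants of the paper's \eqref{problem:wn1-1}. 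One step you should make explicit: under the first alternative in \eqref{cond2:sn}, concluding that the $\min$ term in \eqref{UVbdd:1} is small requires $\mathbf g_ns_n\to\infty$, which uses that $\gamma_n\nu_n(\1)$ is bounded away from zero --- a consequence of \eqref{ergodic} and condition (b), as the paper notes just before Proposition~\ref{prop:sn-selection}; the mixing condition alone does not bound $1/(\mathbf g_ns_n)$.
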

\begin{proof}
(1$\cc$) Recall that the generator of $\mathsf L^w$ of the evolutionary game is given by \eqref{def:Lw}, and $\mathsf L=\mathsf L^0$ denotes the generator of the voter model. By Duhamel's principle \cite[(2.15) in Chapter~1]{EK:MP},
\begin{align}\label{expansion}
\e^{t\mathsf L^w}H=\e^{t\mathsf L }H+\int_0^t\e^{(t-s)\mathsf L^w}(\mathsf L^w-\mathsf L)\e^{s\mathsf L}H\d s .
\end{align}
Here, it follows from \eqref{def:Lw} that
\begin{align}
(\mathsf L^w-\mathsf L)H_1(\xi)&=\sum_{x,y\in E}[q^w(x,y,\xi)-q(x,y)][H_1(\xi^{x,y})-H_1(\xi)].
\label{LwL1}
\end{align}
To apply \eqref{expansion} and \eqref{LwL1}, we choose $H=\overline{f_{\sigma_0} f_{\sigma_2\sigma_3}}$ and $H_1=\e^{s\mathsf L}\overline{f_{\sigma_0} f_{\sigma_2\sigma_3}}$. The following bound will be proved in Proposition~\ref{prop:mutation} (2$\cc$):
\begin{align}\label{claim:w0}
\begin{split}
&\sup_{\xi\in S^E}|\e^{s\mathsf L}\overline{f_{\sigma_0} f_{\sigma_2\sigma_3}}(\xi^x)-\e^{s\mathsf L}\overline{f_{\sigma_0} f_{\sigma_2\sigma_3}}(\xi)|\\
&\leq 
\sum_{\ell\in \{0,2,3\}}4\P(M_{U_0,U_2}>s,B^{U_\ell}_{s}= x)\\
&\quad +\sum_{\ell\in \{0,2,3\}}4\mu(\1)\int_0^s \P(M_{U_0,U_2}>r,B^{U_\ell}_s=x)\d r.
\end{split}
\end{align}

To bound $(\mathsf L^w-\mathsf L)\e^{s\mathsf L}H=(\mathsf L^w-\mathsf L)H_1$ in the expansion \eqref{expansion}, notice that 
\begin{align}\label{qwq:bdd}
|q^w(x,y,\xi)-q(x,y)|\leq  C_{\ref{qwq:bdd}}wq(x,y)
\end{align}
by \eqref{qwq:exp} for some $C_{\ref{qwq:bdd}}$ depending only on $\Pi$. Putting \eqref{LwL1}, \eqref{claim:w0} and \eqref{qwq:bdd} together, we get
\begin{align*}
&\quad \sup_{\xi\in S^E}| (\mathsf L^w-\mathsf L)\e^{s \mathsf L}\overline{f_{\sigma_0} f_{\sigma_2\sigma_3}}(\xi)|\\
&\leq C_{\ref{qwq:bdd}} 4w\sum_{\ell\in \{0,2,3\}}\sum_{x,y\in E}q(x,y)\P(M_{U_0,U_2}>s,B^{U_\ell}_s=x)\\
&\eqspace +C_{\ref{qwq:bdd}} 4w\mu(\1)\sum_{\ell\in \{0,2,3\}}\sum_{x,y\in E}q(x,y)\int_0^s \P(M_{U_0,U_2}>r,B^{U_\ell}_s=x)\d r\\
&\leq C_{\ref{qwq:bdd}}12 w\P(M_{U_0,U_2}>s)+C_{\ref{qwq:bdd}}12 w\mu(\1)\int_0^s \P(M_{U_0,U_2}>r)\d r.
\end{align*}
Since $\e^{(t-s)\mathsf L^w}$ is a probability, the required inequality in \eqref{LwL:1} follows upon applying the foregoing inequality to \eqref{expansion}. We have proved \eqref{LwL:0}. The proof of \eqref{LwL:001} is almost the same if we use Proposition~\ref{prop:mutation} (3$\cc$) instead of Proposition~\ref{prop:mutation} (2$\cc$). The details are omitted.   \medskip 

\noindent (2$\cc$) By the first limit in \eqref{def:wn} and \eqref{LwL:0}, it is enough to show that all of the following limits hold:
\begin{align}
\label{problem:wn1}
&\lim_{n\to\infty}
w_n(2s_n)\cdot 2\gamma_n\nu_n(\1)\int_0^{T} \P^{(n)}(M_{U_0,U_2}>2s_n s)\d s=0;\\
&\lim_{n\to\infty}
[w_n(2s_n)+1]\cdot \mu_n(\1)(2s_n)\cdot   2\gamma_n\nu_n(\1)\int_0^{T}\P^{(n)}(M_{U_0,U_2}>2s_n s)\d s=0\label{problem:wn2}.
\end{align}
(The limit \eqref{problem:wn2} is stronger than needed but is convenient for the other proofs below.)

To get \eqref{problem:wn1}, first, note that by \eqref{cond2:sn}, \eqref{sn:adm} and the limit superior in \eqref{def:wn},
\begin{align}\label{problem:wn1-1}
\lim_{n\to\infty}w_n(2s_n)\cdot  \left[T+\min\Bigg\{ \frac{1}{\mathbf g_n(2s_n)}, \frac{\mathbf t^{(n)}_{\rm mix}}{2s_n}[1+\log^+(\gamma_n/\mathbf t^{(n)}_{\rm mix})]\Bigg\}\right]=0.
\end{align}
We get \eqref{problem:wn1} from applying \eqref{cond:pi} and \eqref{problem:wn1-1} to \eqref{UVbdd:1} with $s_0=2s_n$.
For \eqref{problem:wn2}, 
 $\lim_n\mu_n(\1)(2s_n)=0$ by \eqref{def:mun} and \eqref{sn:adm}. The limit superior in \eqref{def:wn}
 and \eqref{sn:adm} give $\limsup_n w_n(2s_n)<\infty$. These two properties are enough for \eqref{problem:wn2}.
The proof is complete. 
\end{proof}

To satisfy \eqref{def:a(t)} under the setting of \eqref{setup}, we consider the sum of the right-hand side of \eqref{LwL:0}, with $t$ replaced by $\theta_n t$, and $\sup_{\xi\in S^{E_n}}\E^0_\xi\big[w_n\theta_n\overline{f_{\sigma_0} f_{\sigma_2\sigma_3}}(\xi_{\theta_n t})\big]$. Moreover, this supremum can be bounded by using \eqref{ineq:mutation} and $\P(M_{U_0,U_2}>\theta_n t)$, thanks to duality and the choice $\sigma_0\neq \sigma_2$. Therefore, given  $T\in(0,\infty)$, we set $a(t)=a_n(t)=\sum_{\ell=1}^3 a_{n,\ell}(t)$ for $t\in [0,T]$, where
\begin{align}\label{def:an(t)}
\begin{split}
a_{n,\ell}(t)&\defeq C_{\ref{def:an(t)}} \cdot 
 \frac{w_n\theta_n}{2\gamma_n\nu_n(\1)}\cdot w_n\theta_n\cdot 2\gamma_n\nu_n(\1)\int_0^T \P^{(n)}(M_{U_0,U_2}>\theta_n s)\d s\\
&\eqspace +C_{\ref{def:an(t)}}  \cdot  \frac{w_n\theta_n}{2\gamma_n\nu_n(\1)}\cdot 2\gamma_n\nu_n(\1)\P^{(n)}(M_{U_0,U_\ell}>\theta_n t) \\
&\eqspace +C_{\ref{def:an(t)}}  \cdot \frac{w_n\theta_n}{2\gamma_n\nu_n(\1)}\cdot (w_n\theta_n+1)\cdot 
\mu_n(\1)\theta_n\\
&\quad \quad \cdot 2\gamma_n\nu_n(\1)\int_0^T\P^{(n)}(M_{U_0,U_\ell}>\theta_ns)\d s
\end{split}
\end{align}
and $C_{\ref{def:an(t)}}$ depends only on $(\Pi,T)$. 
For any $n\geq 1$, $t\mapsto  a_{n}(t)$ is bounded and decreasing on $[0,T]$, and
\begin{align*}
\sup_{\xi\in S^{E_n}}\E^{w_n}_\xi\left[w_n\theta_n\overline{f_{\sigma_0} f_{\sigma_2\sigma_3}}(\xi_{\theta_nt})\right]&
\leq a_n(t),\quad \forall\;t\in [0,T].
\end{align*}
Hence, the conditions of $a_n(t)$ required in Proposition~\ref{prop:L2} hold.

For the proof of \eqref{closure}, the next step is to show that under the setting of \eqref{setup} and the above choice of $a(t)=a_n(t)$, the right-hand side of \eqref{ineq:L2} vanishes as $n\to\infty$. For the first term on the right-hand side of \eqref{ineq:L2}, proving $\int_0^{\delta_n}a_n(t)\d t$ amounts to proving $\int_0^{\delta_n}a_{n,\ell}(t)\d t\to 0$ for all $1\leq \ell\leq 3$. For the latter limits, note that $\delta_n\to 0$ by \eqref{sn:adm}. Also, a slight modification of the proofs of \eqref{problem:wn1}--\eqref{problem:wn2} shows that for the right-hand side of \eqref{def:an(t)}, the first and last terms in  are bounded in $n$, and the second term satisfies 
\begin{align*}
&\quad \lim_{n\to\infty}2\gamma_n\nu_n(\1)\int_0^{\delta_n}\P^{(n)}(M_{U_0,U_\ell}>\theta_n s)\d s\\
&=\lim_{n\to\infty}\frac{2s_n}{\theta_n}\cdot 2\gamma_n\nu_n(\1)\int_0^{1}\P^{(n)}(M_{U_0,U_\ell}>2s_n s)\d s=0.
\end{align*}
For the second term in \eqref{ineq:L2}, it is enough to show that $a_n(\delta_n)$'s are bounded. From the above argument for the first term in \eqref{ineq:L2}, this property follows if we use the second limit in \eqref{def:wn} and note that
\[
2\gamma_n\nu_n(\1)\P^{(n)}(M_{U_0,U_\ell}>\theta_n t)|_{t=\delta_n}=2\gamma_n\nu_n(\1)\P^{(n)}(M_{U_0,U_\ell}>2s_n )\xrightarrow[n\to\infty]{}  \overline{\kappa}_\ell,
\]
where the limit follows from Proposition~\ref{prop:sn-selection} and Proposition~\ref{prop:kell}. To use these propositions precisely, passing the foregoing limit actually requires that given any subsequence of $(E_n,q^{(n)})$, a suitable further subsequence is used. To lighten the exposition, we continue to suppress similar uses of subsequential limits.

For the third term in \eqref{ineq:L2}, note that $\delta_n\to 0$ by \eqref{sn:adm}, and the $g_n$'s  in \eqref{setup} are uniformly bounded in $n$. The last term in \eqref{ineq:L2} is the major term. By \eqref{setup} and Proposition~\ref{prop:duhamel} (2$\cc$), it remains to prove
\begin{align}\label{voter:density}
\lim_{n\to\infty}\sup_{\xi\in S^{E_n}}\big|\E^{0}_\xi[w_n\theta_n\overline{f_{\sigma_0} f_{\sigma_2\sigma_3}}(\xi_{2s_n})]-w_\infty Q_{\sigma_0,\sigma_2\sigma_3}\big(p(\xi)\big)\big|=0.
\end{align}
For the next lemma, recall that the total variation distance $d_E$ and the spectral gap $\mathbf g$ are  defined at the beginning of Section~\ref{sec:mainresults}.  Also, here and in what follows, we use the shorthand notation $\E[Z;A]=\E[Z\1_A]$.

\begin{lem}\label{lem:RW}
Fix $(\sigma_0,\sigma_2,\sigma_3)\in S\times S\times S$ such that $\sigma_0\neq \sigma_2$. \medskip 

\noindent {\rm (1$\cc$)}
Given any $0<s<t<\infty$, the following estimate of the voter model by the coalescing Markov chains holds:
\begin{align}
\begin{split}
&\quad  \sup_{\xi\in S^E}\Big|\E^0_\xi\big[\,\overline{f_{\sigma_0} f_{\sigma_2\sigma_3}}(\xi_t)\big]-
\1_{\{\sigma_2=\sigma_3\}}\P(M_{U_0,U_2}>s,M_{U_0,U_3}>s)p_{\sigma_0}(\xi)p_{\sigma_2}(\xi)\\
&\quad\quad+\1_{\{\sigma_2=\sigma_3\}}\P(M_{U_0,U_2}>s,M_{U_2,U_3}>s,M_{U_0,U_3}>s)p_{\sigma_0}(\xi)p_{\sigma_2}(\xi)\\
&\quad\quad  -\1_{\{\sigma_0=\sigma_3\}}\P(M_{U_0,U_2}>s,M_{U_2,U_3}>s)p_{\sigma_0}(\xi)p_{\sigma_2}(\xi)\\
&\quad\quad  +\1_{\{\sigma_0=\sigma_3\}}\P(M_{U_0,U_2}>s,M_{U_2,U_3}>s,M_{U_0,U_3}>s)p_{\sigma_0}(\xi)p_{\sigma_2}(\xi)\\
&\quad\quad  -\P(M_{U_0,U_2}>s,M_{U_2,U_3}>s,M_{U_0,U_3}>s)p_{\sigma_0}(\xi)p_{\sigma_2}(\xi)p_{\sigma_3}(\xi)
\Big|\\
&\leq C_{\ref{ineq:RW}}\sum_{\ell=1}^3\Gamma_\ell(s,t),\label{ineq:RW}
\end{split}
\end{align} 
where $C_{\ref{ineq:RW}}$ is a universal constant and
\begin{align}
\begin{split}\label{ineq:Well_TV}
\Gamma_\ell(s,t)&\defeq \P(M_{U_0,U_\ell}\in (s,t])\\
&\eqspace+\min\left\{\sqrt{\frac{\pi_{\max}}{\nu(\1)}}\e^{-\mathbf g(t-s)},\P(M_{U_0,U_\ell}>s)d_E(t-s)\right\}\\
&\eqspace + \big(1-\e^{-2\mu(\1)t}\big)\P(M_{U_0,U_\ell}>t)+
\mu(\1)\int_0^t\P(M_{U_0,U_\ell}>r)\d r.
\end{split}
\end{align}
{\rm (2$\cc$)}  The limit in \eqref{voter:density} holds. 
\end{lem}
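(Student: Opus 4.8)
The plan is to prove the finite-space estimate (1$\cc$) first and then obtain (2$\cc$) by scaling. For (1$\cc$) the first move is to realize $\overline{f_{\sigma_0}f_{\sigma_2\sigma_3}}$ as a three-point function of a single stationary chain. By the representation below \eqref{def:Well}, $\overline{f_{\sigma_0}f_{\sigma_2\sigma_3}}(\xi)=\E\big[\1_{\sigma_0}\circ\xi(U_1')\1_{\sigma_2}\circ\xi(U_1)\1_{\sigma_3}\circ\xi(U_2)\big]$, in which the three tagged sites $U_1',U_1,U_2$ sit at mutual $q$-distances $2$, $1$ and $3$. Reversing the edge $U_0\to U_1'$ by reversibility of $q$ with respect to $\pi$ identifies the law of $(U_1',U_1,U_2)$ with that of $(U_0,U_2,U_3)$ for one stationary chain $\{U_\ell\}$, so that $\overline{f_{\sigma_0}f_{\sigma_2\sigma_3}}(\xi)=\E\big[\1_{\sigma_0}\circ\xi(U_0)\1_{\sigma_2}\circ\xi(U_2)\1_{\sigma_3}\circ\xi(U_3)\big]$. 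I would then apply the coalescing duality with mutation (Proposition~\ref{prop:mutation}) to turn $\E^0_\xi[\overline{f_{\sigma_0}f_{\sigma_2\sigma_3}}(\xi_t)]$ into the expectation, over three coalescing-and-mutating dual lineages issued from the stationary points $U_0,U_2,U_3$ (taken independent of the dual dynamics), of the product $\1_{\sigma_0}\circ\xi\cdot\1_{\sigma_2}\circ\xi\cdot\1_{\sigma_3}\circ\xi$ read off the lineages at time $t$.

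The core of (1$\cc$) is to condition on the coalescence pattern of the three lineages at the intermediate time $s$. Since a coalesced lineage carries one type, the hypothesis $\sigma_0\neq\sigma_2$ annihilates every pattern in which the lineages from $U_0$ and $U_2$ have merged; the surviving patterns are exactly (i) all three distinct, (ii) $(U_2,U_3)$ merged with $U_0$ isolated, which forces $\sigma_2=\sigma_3$, and (iii) $(U_0,U_3)$ merged with $U_2$ isolated, which forces $\sigma_0=\sigma_3$. These are the pivot events whose scaled probabilities define $\ok_{0|2|3}$, $\ok_{(2,3)|0}$ and $\ok_{(0,3)|2}$ in \eqref{def:|kell1|ell2}--\eqref{def:|||}, and the subtraction of the triple event in \eqref{ineq:RW} implements the $-\ok_{0|2|3}$ corrections of \eqref{def:Qsigma}. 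On each surviving pattern I would run the one or two remaining distinct lineages from $s$ to $t$: the chance of further coalescence in $(s,t]$ is at most $\P(M_{U_0,U_\ell}\in(s,t])$, the first term of $\Gamma_\ell$, while on its complement the remaining lineages move as independent stationary chains, so their time-$t$ positions decorrelate into a product of copies of $\pi$. Replacing $\E[\prod_\sigma\1_\sigma\circ\xi(\mathrm{position}_t)]$ by the corresponding product of densities $p_\sigma(\xi)$ is the quantitatively delicate step, and I expect this mixing estimate---bounded either in $L^2$ through the spectral gap, producing the $\sqrt{\pi_{\max}/\nu(\1)}\,\e^{-\mathbf g(t-s)}$ term, or in total variation through $d_E(t-s)$, producing the $\P(M_{U_0,U_\ell}>s)d_E(t-s)$ term---to be the main obstacle, because it must be run jointly with the coalescence conditioning rather than for a single free chain.

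Before that decomposition I would strip off mutation. Comparing the mutating dual with the pure coalescing dual costs the probability that a mutation clock disturbs one of the two relevant lineages, which is controlled by $(1-\e^{-2\mu(\1)t})\P(M_{U_0,U_\ell}>t)+\mu(\1)\int_0^t\P(M_{U_0,U_\ell}>r)\d r$, i.e.\ the last two terms of $\Gamma_\ell$; the per-site bound \eqref{claim:w0} from Proposition~\ref{prop:mutation}(2$\cc$) supplies exactly what is needed here. Summing the pattern-by-pattern discrepancies over the three pairs, together with the coalescence-window, mixing and mutation contributions, then collapses into the asserted bound $C_{\ref{ineq:RW}}\sum_{\ell=1}^3\Gamma_\ell(s,t)$.

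For (2$\cc$) I would apply (1$\cc$) with $t=2s_n$ and $s=s_n$, multiply by $w_n\theta_n$, and use $w_n\theta_n/(2\gamma_n\nu_n(\1))\to w_\infty$ from \eqref{def:wn}. The main terms become $w_\infty$ times $2\gamma_n\nu_n(\1)$ applied to the three pivot probabilities, which converge to $\ok_{0|2|3}$, $\ok_{(2,3)|0}$ and $\ok_{(0,3)|2}$ by Propositions~\ref{prop:sn-selection} and \ref{prop:kell} (passing to a subsequence as already flagged), reconstructing $w_\infty Q_{\sigma_0,\sigma_2\sigma_3}(p(\xi))$ once the $\1_{\sigma_2=\sigma_3}$, $\1_{\sigma_0=\sigma_3}$ factors and the $-\ok_{0|2|3}$ corrections of \eqref{def:Qsigma} are inserted. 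It then remains to verify $w_n\theta_n\sum_{\ell}\Gamma_\ell(s_n,2s_n)\to0$: the window term vanishes because $2\gamma_n\nu_n(\1)\P^{(n)}(M_{U_0,U_\ell}\in(s_n,2s_n])\to\ok_\ell-\ok_\ell=0$ by the $t$-independence in Proposition~\ref{prop:kell}; the mixing term vanishes by the slow-sequence mixing conditions \eqref{cond2:sn}; and the two mutation terms vanish because $s_n/\theta_n\to0$ by \eqref{sn:adm} while $\mu_n(\1)\theta_n$ stays bounded, exactly as in the proof of Proposition~\ref{prop:duhamel}(2$\cc$).
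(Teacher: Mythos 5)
Your proposal follows essentially the same route as the paper's proof: the reversibility identification $\overline{f_{\sigma_0}f_{\sigma_2\sigma_3}}(\xi)=\E[\1_{\sigma_0}\circ\xi(U_0)\,\1_{\sigma_2}\circ\xi(U_2)\,\1_{\sigma_3}\circ\xi(U_3)]$, coalescing duality, annihilation by $\sigma_0\neq\sigma_2$ of every pattern in which the lineages from $U_0$ and $U_2$ merge (leaving exactly the three pivot patterns of \eqref{def:III}), the Markov property at the intermediate time $s$ with window terms $\P(M_{U_0,U_\ell}\in(s,t])$ plus the spectral-gap/total-variation mixing bound, and, for (2$\cc$), the choice $t=2s_n$, $s=s_n$ together with Propositions~\ref{prop:sn-selection} and \ref{prop:kell} and the vanishing of the error terms as in \eqref{problem:wn4}--\eqref{problem:wn6}. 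One correction: the mutation comparison needed here is \eqref{ineq:mutation} from Proposition~\ref{prop:mutation} (1$\cc$) — whose bound you in fact quote verbatim — not the single-site perturbation estimate \eqref{claim:w0} of Proposition~\ref{prop:mutation} (2$\cc$), which compares $\E^0_{\xi^x}$ with $\E^0_\xi$ and serves the Duhamel argument of Proposition~\ref{prop:duhamel} rather than the stripping of mutations from the dual lineages.
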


\begin{proof}
(1$\cc$) First, we consider the case that there is no mutation. Roughly speaking, the method of this proof is to express $\E_\xi^0\big[\,\overline{f_{\sigma_0} f_{\sigma_2\sigma_3}}(\xi_t)\big]$ in terms of coalescing Markov chains before any two coalesce. This way we can express the coalescing Markov chains as independent Markov chains and compute the asymptotics of $\E_\xi^0\big[\,\overline{f_{\sigma_0} f_{\sigma_2\sigma_3}}(\xi_t)\big]$ by the $\ok$-constants defined in \eqref{def:kell}--\eqref{def:|||}. This idea goes back to \cite[Proposition~6.1]{CCC}.

Now, by duality and the assumption $\sigma_0\neq \sigma_2$, it holds that 
\begin{align}
&\quad\,\E_\xi^0\big[\,\overline{f_{\sigma_0} f_{\sigma_2\sigma_3}}(\xi_t)\big]\notag\\
&=\E[\1_{\sigma_0}\circ \xi(B^{U_0}_t)
\1_{\sigma_2}\circ \xi(B^{U_2}_t)\1_{\sigma_3}\circ \xi(B^{U_3}_t)]\notag\\
&=\1_{\{\sigma_2=\sigma_3\}}\E[\1_{\sigma_0}\circ \xi(B^{U_0}_t)\1_{\sigma_2}\circ \xi(B^{U_2}_t);M_{U_0,U_2}>t,M_{U_2,U_3}\leq t,M_{U_0,U_3}>t]\notag\\
&\quad +\1_{\{\sigma_0=\sigma_3\}}\E[\1_{\sigma_0}\circ \xi(B^{U_0}_t)\1_{\sigma_2}\circ \xi(B^{U_2}_t);M_{U_0,U_2}>t,M_{U_2,U_3}> t,M_{U_0,U_3}\leq t]\notag\\
&\quad +\E[\1_{\sigma_0}\circ \xi(B^{U_0}_t)\1_{\sigma_2}\circ \xi(B^{U_2}_t)\1_{\sigma_3}\circ \xi(B^{U_3}_t);M_{U_0,U_2}>t,M_{U_2,U_3}> t,M_{U_0,U_3}>t]\notag\\
&=\1_{\{\sigma_2=\sigma_3\}}{\rm I}+\1_{\{\sigma_0=\sigma_3\}}{\rm II}+{\rm III}.\label{def:III}
\end{align}
We can further write ${\rm I}$ and ${\rm II}$ as
\begin{align}
\begin{split}\label{def:Iterm}
{\rm I}&=\E[\1_{\sigma_0}\circ \xi(B^{U_0}_t)\1_{\sigma_2}\circ \xi(B^{U_2}_t);M_{U_0,U_2}>t,M_{U_0,U_3}>t]\\
&\quad \,-\E[\1_{\sigma_0}\circ \xi(B^{U_0}_t)\1_{\sigma_2}\circ \xi(B^{U_2}_t);M_{U_0,U_2}>t,M_{U_2,U_3}> t,M_{U_0,U_3}>t]\\
&={\rm I'}-{\rm I''},
\end{split}\\
\begin{split}\label{def:IIterm}
{\rm II}&=\E[\1_{\sigma_0}\circ \xi(B^{U_0}_t)\1_{\sigma_2}\circ \xi(B^{U_2}_t);M_{U_0,U_2}>t,M_{U_2,U_3}> t]\\
&\quad\, -\E[\1_{\sigma_0}\circ \xi(B^{U_0}_t)\1_{\sigma_2}\circ \xi(B^{U_2}_t);M_{U_0,U_2}>t,M_{U_2,U_3}> t,M_{U_0,U_3}> t]\\
&={\rm II'}-{\rm I''}.
\end{split}
\end{align}
We estimate ${\rm I'},{\rm I''}, {\rm II'}$ and ${\rm III}$ below, using the property that the coalescing Markov chains move independently before meeting.

First,  for $0<s<t<\infty$, it follows from the Markov property of independent $q$-Markov chains at time $s$ that ${\rm I}'$ in \eqref{def:Iterm} can be estimated as follows:
\begin{align}
\begin{split}\label{def:I1}
&\quad \big|{\rm I}'-\P(M_{U_0,U_2}>s,M_{U_0,U_3}>s)p_{\sigma_0}(\xi)p_{\sigma_2}(\xi)\big|\\
&\leq\E\Big[\left|\e^{(t-s)(q-1)}\1_{\sigma_0}\circ\xi(B_s^{U_0})\e^{(t-s)(q-1)}\1_{\sigma_2}\circ\xi(B_s^{U_2})-p_{\sigma_0}(\xi)p_{\sigma_2}(\xi)\right|\\
&\quad \quad ;M_{U_0,U_2}>s,M_{U_0,U_3}>s\Big]\\
&\eqspace+ \P(M_{U_0,U_2}\in (s,t])+\P(M_{U_0,U_3}\in (s,t]).
\end{split}
\end{align}
On the event $\{M_{U_0,U_2}>s,M_{U_0,U_3}>s\}$, $(B^{U_0}_r)_{0\leq r\leq s}$ and $(B^{U_2}_r)_{0\leq r\leq s}$ are independent $q$-Markov chains and each chain is stationary by the assumption on $\{U_\ell\}$. Since $p_\sigma(\xi)=\sum_x\1_\sigma\circ \xi(x)\pi(x)$, the expectation in \eqref{def:I1} can be estimate as in the proof of \cite[Proposition~6.1]{CCC}. We get
\begin{align}
&\eqspace\big|{\rm I}'-\P(M_{U_0,U_2}>s,M_{U_0,U_3}>s)p_{\sigma_0}(\xi)p_{\sigma_2}(\xi)\big|\notag\\
&\leq 
\min\left\{2\sqrt{\frac{\pi_{\max}}{\nu(\1)}}\e^{-\mathbf g(t-s)},4\P(M_{U_0,U_2}>s,M_{U_0,U_3}>s)d_E(t-s)\right\}\notag\\
&\eqspace+ \P(M_{U_0,U_2}\in (s,t])+\P(M_{U_0,U_3}\in (s,t]).\notag
\end{align}
Similar estimates apply to the other terms ${\rm I''}$, ${\rm II}'$ and ${\rm III}$ in \eqref{def:III}, \eqref{def:Iterm} and \eqref{def:IIterm}.

Applying all of these estimates to \eqref{def:III} proves \eqref{ineq:RW} when there is no mutation. The additional terms in \eqref{ineq:RW} arise when we include mutation and use \eqref{ineq:mutation} again.\medskip

\noindent (2$\cc$) Recall the second limit in \eqref{def:wn} and \eqref{problem:wn2}.
Then by \eqref{ineq:RW} and \eqref{ineq:Well_TV} with $t=2s_n$ and $s=s_n$, it suffices to show all of the following limits:
\begin{align}
&\lim_{n\to\infty} 2\gamma_n\nu_n(\1)\P^{(n)}(M_{U_0,U_\ell}\in (s_n,2s_n])=0,\quad 1\leq \ell\leq 3;\label{problem:wn4}\\
&\lim_{n\to\infty}\big(1-\e^{-2\mu_n(\1)\cdot (2s_n)}\big)\cdot 2\gamma_n\nu_n(\1)\P^{(n)}(M_{U_0,U_\ell}>2s_n),\quad 1\leq \ell\leq 3;\label{problem:wn5}\\
&\lim_{n\to\infty} \Gamma_{n,\ell}=0,\quad 1\leq \ell\leq 3,\label{problem:wn6}
\end{align}
where $\Gamma_{n,\ell}$ is given by the minimum of the following two terms:
\begin{align}\label{def:Gamma}
\gamma_n\nu_n(\1)\cdot \sqrt{\frac{\pi^{(n)}_{\max}}{\nu_n(\1)}}\e^{-\mathbf g_ns_n},\quad
 2\gamma_n\nu_n(\1)\P^{(n)}(M_{U_0,U_\ell}>s_n)d_{E_n}(s_n) .
\end{align}

To see \eqref{problem:wn4}, we simply use Propositions~\ref{prop:sn-selection} and \ref{prop:kell}. The limit in \eqref{problem:wn5} follows from the same propositions, in addition to \eqref{def:mun} and \eqref{sn:adm}. For \eqref{problem:wn6}, we consider the following two cases. When $\Gamma_n$ is given by the first term in \eqref{def:Gamma}, the required limit holds by \eqref{cond:pi} and the first limit in \eqref{cond2:sn}. When $\Gamma_n$ is given by  the other term in \eqref{def:Gamma}, we first use Propositions~\ref{prop:sn-selection} and \ref{prop:kell}. Then note that the second limit in \eqref{cond2:sn} implies $\lim_{n}\mathbf t^{(n)}_{\mix}/s_n=0$, and so, $\lim_nd_{E_n}(s_n)=0$ by \eqref{ineq:tmix}. We have proved \eqref{problem:wn6}. The proof is complete.
\end{proof}

Up to this point, we have proved the asymptotic closure of equation in the sense of \eqref{closure}. Note that under \eqref{setup}, the convergence of the last term in \eqref{ineq:L2} also contributes to asymptotic path regularity of the density processes. 

The next lemma proves the asymptotic path regularity more explicitly as tightness in the convergence results of Theorem~\ref{thm:main}. The limit of the normalized martingale terms in Theorem~\ref{thm:main} (2$\cc$) is also proven. Here, recall  that the density processes satisfy the  decompositions in \eqref{density:dynamics}. From now on,  $\xrightarrow[n\to\infty]{\rm (d)}$ refers to convergence in distribution as $n\to\infty$. 

\begin{lem}\label{lem:tight}
Fix $\sigma\in S$.  

\begin{enumerate}
\item [\rm (1$\cc$)] 
The sequence of laws of $I_\sigma^{(n)}$ as continuous processes under $\P^{w_n}_{\nu_n}$ is tight.

\item [\rm (2$\cc$)] 
The sequence of laws of $\1_{\{w_n>0\}}w_n^{-1}R_\sigma^{(n)}$ as continuous processes under $\P^{w_n}_{\nu_n}$  is tight.

\item [\rm (3$\cc$)] The sequence of laws of $M_\sigma^{(n)}$ as continuous processes under $\P^{w_n}_{\nu_n}$ converges to zero in distribution. 
\end{enumerate}
If, in addition, $\lim_n \gamma_n\nu_n(\1)/\theta_n=0$, then the following holds.
\begin{enumerate}
\item [\rm (4$\cc$)]  The sequence of laws of 
\begin{align}\label{def:product}
\left(\left(
\frac{\gamma_n}{\theta_n}\right)^{1/2}M_\sigma^{(n)}(t),\;\;
\frac{\gamma_n}{\theta_n}\langle M_\sigma^{(n)},M_\sigma^{(n)}\rangle_{t}-\int_0^t  p_\sigma(\xi_{\theta_ns})[1-p_{\sigma}(\xi_{\theta_ns})]\d s;t\geq 0\right)
\end{align}
as processes with c\`adl\`ag paths under $\P^{w_n}_{\nu_n}$ is $C$-tight, and the second coordinates 
converge to zero in distribution as processes. Moreover, for all $T\in(0,\infty)$,
\begin{align}\label{ineq:L2-bdd}
\sup_{n\geq 1}\sup_{t\in [0,T]}\sup_{\xi\in S^{E_n}}\frac{\gamma_n}{\theta_n}\E^{w_n}_{\xi}\big[M^{(n)}_\sigma(t)^2\big]<\infty.
\end{align}
\item [\rm (5$\cc$)]  For any $\sigma'\in S$ with $\sigma'\neq \sigma$, the sequence  
\begin{align}\label{def:product1}
\left(
\frac{\gamma_n}{\theta_n}\langle M_\sigma^{(n)},M_{\sigma'}^{(n)}\rangle_{t}+\int_0^t  p_\sigma(\xi_{\theta_ns})p_{\sigma'}(\xi_{\theta_ns})\d s;t\geq 0\right)
\end{align}
under $\P^{w_n}_{\nu_n}$ converges to zero in distribution as processes. 
\end{enumerate}
\end{lem}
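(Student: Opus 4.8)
The plan is to extract all five assertions from the semimartingale decomposition \eqref{psigma:dec} and the predictable (co)variation formulas \eqref{def:<M1>}--\eqref{def:<M2>}, separating the finite-variation parts $I^{(n)}_\sigma,R^{(n)}_\sigma$ (tight by soft boundedness) from the martingale part $M^{(n)}_\sigma$ (controlled through its predictable quadratic variation, hence through the meeting-time asymptotics of Section~\ref{sec:slow}). For (1$\cc$) and (2$\cc$) I would argue by deterministic equicontinuity. In the explicit form \eqref{def:In}, the integrand of $I^{(n)}_\sigma$ is $w_n\theta_n\overline D_\sigma(\xi_{\theta_ns})$ plus the two mutation terms; since $\overline D_\sigma$ is bounded by a constant depending only on $\Pi$, $\sup_nw_n\theta_n<\infty$ by \eqref{def:wn}, and $\theta_n\mu_n(\sigma)\to\mu_\infty(\sigma)$ by \eqref{def:mun}, this integrand is bounded uniformly in $n$ and $\xi$. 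By \eqref{def:R} and \eqref{qwq:exp} the integrand of $\1_{\{w_n>0\}}w_n^{-1}R^{(n)}_\sigma$ is likewise bounded by $2\|R^w\|_\infty\sup_nw_n\theta_n$. Both processes vanish at $t=0$ and are uniformly Lipschitz in $t$, so $C$-tightness follows from Arzel\`a--Ascoli; no coalescent input is needed here.

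For (3$\cc$) I would invoke Doob's $L^2$-inequality to reduce the claim to $\sup_\xi\E^{w_n}_\xi[\langle M_\sigma\rangle_{\theta_nT}]\to0$. Bounding $q^{w_n}\le(1+C\overline w)q$ in the selection part of \eqref{def:<M1>} and rewriting it through $p_{\sigma\sigma'}$ of \eqref{def:p10}, duality \eqref{dual:1} gives $\E^0_\xi[p_{\sigma\sigma'}(\xi_s)]\le\P(M_{V,V'}>s)$ for $\sigma\neq\sigma'$, so by \eqref{ergodic} the integrated selection variation is at most $(\#S-1)\P(M_{U,U'}\le\theta_nT)$; this tends to $0$ because $\theta_n/\gamma_n\to0$ forces $\P(M_{U,U'}>\theta_nT)\to1$ via the exponential approximation underlying Proposition~\ref{prop:sn-selection}. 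The mutation part of \eqref{def:<M1>} is at most $\nu_n(\1)\theta_nT\mu_n(S)$, which vanishes since $\nu_n(\1)\le\pi^{(n)}_{\max}\to0$ by \eqref{cond:pi} while $\theta_n\mu_n(S)$ stays bounded. The transfer from $\P^{w_n}$ to $\P^0$ is supplied by Proposition~\ref{prop:duhamel}, specifically \eqref{LwL:001}.

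The core is (4$\cc$)--(5$\cc$), where I would prove a replacement (closure) result for the quadratic-variation integrand in the spirit of \eqref{closure} and Lemma~\ref{lem:RW}. After the time change, the selection part of the normalized variation equals $\gamma_n\nu_n(\1)\int_0^t\phi_\sigma(\xi_{\theta_nr})\,\d r$, where $\phi_\sigma=\sum_{\sigma'\neq\sigma}(p_{\sigma\sigma'}+p_{\sigma'\sigma})$ is the nearest-neighbor discrepancy for the pair $(V,V')$ of \eqref{def:VV'}. I would apply Proposition~\ref{prop:L2} with $f_n=\gamma_n\nu_n(\1)\phi_\sigma$ (in its $q^{w_n}$-corrected form), $g_n=p_\sigma(1-p_\sigma)=\sum_{\sigma'\neq\sigma}p_\sigma p_{\sigma'}$, $\delta_n=2s_n/\theta_n$, and an envelope $a_n(t)$ built as in \eqref{def:an(t)}. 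The decisive term is $\sup_\xi|\E^{w_n}_\xi[\gamma_n\nu_n(\1)\phi_\sigma(\xi_{2s_n})]-p_\sigma(\xi)(1-p_\sigma(\xi))|\to0$, which I would verify in three moves: reduce $\P^{w_n}$ to $\P^0$ by \eqref{LwL:001}; use duality to write $\E^0_\xi[p_{\sigma\sigma'}(\xi_{2s_n})]=\E[\1_\sigma\circ\xi(B^V_{2s_n})\1_{\sigma'}\circ\xi(B^{V'}_{2s_n});M_{V,V'}>2s_n]$ and the fact that before meeting $B^V,B^{V'}$ are independent and, past the mixing time guaranteed by \eqref{cond2:sn}, essentially $\pi^{(n)}$-distributed, so this is $\P(M_{V,V'}>2s_n)p_\sigma(\xi)p_{\sigma'}(\xi)$ up to $d_{E_n}$/spectral-gap errors; and invoke $\gamma_n\nu_n(\1)\P(M_{V,V'}>2s_n)\to\tfrac12$ from \eqref{cond:kappa0} through Proposition~\ref{prop:sn-selection}. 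Summing over $\sigma'\neq\sigma$ recovers $p_\sigma(1-p_\sigma)$, proving that the second coordinate in \eqref{def:product} converges to zero. The mutation contribution to $\frac{\gamma_n}{\theta_n}\langle M_\sigma\rangle$ is at most $\frac{\gamma_n\nu_n(\1)}{\theta_n}\mu_\infty(S)\,t$, and this is exactly where the extra hypothesis $\gamma_n\nu_n(\1)/\theta_n\to0$ enters, forcing it to vanish so that the limiting variation is the pure Wright--Fisher expression. The uniform bound \eqref{ineq:L2-bdd} then follows from $\frac{\gamma_n}{\theta_n}\P(M_{U,U'}\le\theta_nt)\le Ct$ (exponential approximation) together with the now-controlled mutation part; statement (5$\cc$) is identical, run on the cross-variation \eqref{def:<M2>} with replacement limit $-\int_0^tp_\sigma p_{\sigma'}\,\d s$, and needs neither envelope nor $L^2$ bookkeeping since only convergence to zero is asserted. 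For the $C$-tightness of the pair \eqref{def:product} I would combine the just-proved convergence of the second coordinate with $C$-tightness of $(\gamma_n/\theta_n)^{1/2}M^{(n)}_\sigma$: its predictable variation converges, and its jumps are bounded by $(\gamma_n/\theta_n)^{1/2}\pi^{(n)}_{\max}$ with $(\gamma_n/\theta_n)(\pi^{(n)}_{\max})^2\le\gamma_n\nu_n(\1)/\theta_n\to0$, so the standard martingale tightness criterion applies and tightness of the marginals yields tightness of the pair.

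The step I expect to be hardest is the quadratic-variation replacement in (4$\cc$)--(5$\cc$): proving $\E^{w_n}_\xi[\gamma_n\nu_n(\1)\phi_\sigma(\xi_{2s_n})]\to p_\sigma(\xi)(1-p_\sigma(\xi))$ uniformly in $\xi$ and fitting it into Proposition~\ref{prop:L2} with a genuinely decreasing envelope, while simultaneously tracking the mutation term and the $\P^{w_n}\to\P^0$ perturbation. This is the single point where the decorrelation \eqref{cond:kappa0}, the almost-exponentiality of the meeting times, and the slow-sequence mixing conditions \eqref{cond2:sn} must all be used together.
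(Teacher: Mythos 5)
Your proposal is correct, and on the substantive parts (3$\cc$)--(5$\cc$) it is essentially the paper's own argument: the same decomposition of $\tfrac{\gamma_n}{\theta_n}\langle M^{(n)}_\sigma,M^{(n)}_\sigma\rangle$ into a selection integral, a mutation integral and a $w_n$-remainder (cf.\ \eqref{eq:Mn}); the same replacement scheme via Proposition~\ref{prop:L2} at the slow scale $\delta_n=2s_n/\theta_n$, with Duhamel (\eqref{LwL:001}) transferring $\P^{w_n}$ to $\P^0$, duality plus the mixing conditions factorizing the pre-meeting chains, and $\overline{\kappa}_0=1$ (Proposition~\ref{prop:sn-selection}) identifying the limit $p_\sigma(1-p_\sigma)$; and the same jump bound $(\gamma_n/\theta_n)^{1/2}\pi^{(n)}_{\max}$ to upgrade tightness of the normalized martingale to $C$-tightness --- your inequality $(\pi^{(n)}_{\max})^2\le \nu_n(\1)$ is a clean substitute for the paper's appeal to \eqref{cond:pi} here. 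Where you genuinely diverge is (1$\cc$)--(2$\cc$): since $|\overline D_\sigma|\le C(\Pi,\#S)$, $\limsup_n w_n\theta_n<\infty$ by \eqref{def:wn}, and $\theta_n\mu_n(\sigma)\to\mu_\infty(\sigma)$ by \eqref{def:mun}, the integrands of $I^{(n)}_\sigma$ and $\1_{\{w_n>0\}}w_n^{-1}R^{(n)}_\sigma$ are indeed uniformly bounded, so these processes are deterministically Lipschitz uniformly in $n$ and Arzel\`a--Ascoli gives tightness with no coalescent input; this is valid and more elementary than the paper, which instead bounds $\sup_\xi\E^{w_n}_\xi[|I^{(n)}_\sigma(\theta)|]$ by the meeting-time envelope \eqref{In:continuity}/\eqref{def:an'(t)} and invokes Aldous's criterion. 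What the paper's heavier route buys is precisely the estimate reused in (4$\cc$), where the analogous integrand carries the prefactor $\gamma_n\nu_n(\1)$, which is \emph{not} bounded for recurrent spatial structures, so the crude boundedness argument would fail there --- and you correctly switch to the meeting-time machinery at exactly that point. Your (3$\cc$) via Doob's $L^2$-inequality, with $\P(M_{U,U'}\le \theta_nT)\to 0$ from the exponential approximation (legitimate, since an admissible $(\theta_n)$ with $\theta_n/\gamma_n\to 0$ is itself a slow sequence, as the paper notes around \eqref{MVV':conv-tight}), realizes the same mechanism the paper invokes by deferring (3$\cc$) to the proof of (4$\cc$). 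One small slip: in (5$\cc$) you still need the Proposition~\ref{prop:L2} envelope for the cross-variation integrand, exactly as in (4$\cc$); what can genuinely be dropped there is only the analogue of the uniform second-moment bound \eqref{ineq:L2-bdd}.
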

\begin{proof}
(1$\cc$) First, we show a bound for $\sup_{\xi\in S^{E_n}}\E_{\xi}^{w_n}[|I^{(n)}_\sigma(\theta)|]$ explicitly in $\theta$. By \eqref{eq:Dsigma0},
\begin{align}\label{Dsigma:22}
|\overline{D}_\sigma(\xi)|\leq C_{\ref{Dsigma:22}}\sum_{\stackrel{\scriptstyle \sigma'\in S}{\sigma'\neq \sigma}}\overline{f_{\sigma\sigma'}}(\xi)
\end{align}
for some constant $C_{\ref{ineq:Itheta}}$ depending only on $\Pi$ and $\#S$. Indeed, for $q(x,y)>0$, $\1_\sigma\circ\xi(y)-\1_\sigma\circ\xi (x)\neq 0$ implies that either $\xi(x)$ or $\xi(y)$ is $\sigma$ but not both.  By  \eqref{def:In} and \eqref{Dsigma:22}, 
\begin{align}\label{In:continuity}
\begin{split}
\sup_{\xi\in S^{E_n}}\E_{\xi}^{w_n}[|I^{(n)}_\sigma(\theta)|]&\leq C_{\ref{Dsigma:22}}\sum_{\stackrel{\scriptstyle \sigma'\in S}{\sigma'\neq \sigma}}\int_0^\theta \sup_{\xi\in S^{E_n}}\E^{w_n}_{\xi}\left[w_n\theta_n\overline{f_{\sigma\sigma'}}(\xi_{\theta_n s})\right]\d s\\
&\quad +2\mu_n(\1)\theta_n\cdot \theta.
\end{split}
\end{align}
Furthermore, we can bound the expectations on the right-hand side of \eqref{In:continuity} by using an analogue  of the $a_n(t)$ in \eqref{def:an(t)}, but now involving only the meeting time $M_{V,V'}$. Specifically, given $T\in(0,\infty)$, the following inequality holds for all $t\in [0,T]$:
\begin{align}\label{def:an'(t)}
\begin{split}
&\eqspace\E^{w_n}_{\xi}\left[w_n\theta_n\overline{f_{\sigma\sigma'}}(\xi_{\theta_n t})\right]\\
&\leq  C_{\ref{def:an'(t)}} \cdot 
 \frac{w_n\theta_n}{2\gamma_n\nu_n(\1)}\cdot w_n\theta_n\cdot 2\gamma_n\nu_n(\1)\int_0^T \P^{(n)}(M_{V,V'}>\theta_n s)\d s\\
&\eqspace +C_{\ref{def:an'(t)}}  \cdot  \frac{w_n\theta_n}{2\gamma_n\nu_n(\1)}\cdot 2\gamma_n\nu_n(\1)\P^{(n)}(M_{V,V'}>\theta_n t) \\
&\eqspace +C_{\ref{def:an'(t)}}  \cdot \frac{w_n\theta_n}{2\gamma_n\nu_n(\1)}\cdot (w_n\theta_n+1)\cdot
\mu_n(\1)\theta_n\\
&\eqspace\quad \cdot  2\gamma_n\nu_n(\1)\int_0^T\P^{(n)}(M_{V,V'}>\theta_ns)\d s
\end{split}
\end{align}
and $C_{\ref{def:an'(t)}}$ depends only on $(\Pi,T,\sup_n\pi^{(n)}_{\max}/\pi^{(n)}_{\min})$. To see \eqref{def:an'(t)}, we combine \eqref{LwL:001} and \cite[Proposition~3.2]{CC} and then use \eqref{cond:pi} and \eqref{ineq:MUVcompare} to reduce probabilities of $M_{U_0,U_1}$ to probabilities of $M_{V,V'}$.

Next, we show that 
\begin{align}\label{ineq:Itheta}
\lim_{\theta\searrow  0}\limsup_{n\to\infty}\sup_{\xi\in S^{E_n}}\E_\xi^{w_n}[|I^{(n)}_\sigma(\theta)|]=0.
\end{align}
First, \eqref{def:mun} readily gives the required limit of the last term of \eqref{In:continuity}. We focus on the sum of integrals on the right-hand side of \eqref{def:mun}. For each of these integrals, note that the first and last terms in \eqref{def:an'(t)} are uniformly bounded in $n$ as in the case of \eqref{def:an(t)}. The integral over $t\in [0,\theta]$ of the second term in \eqref{def:an'(t)}  satisfies 
\begin{align}\label{MVV':conv-tight}
\lim_{\theta\searrow  0}\limsup_{n\to\infty}\frac{w_n\theta_n}{2\gamma_n\nu_n(\1)}\cdot \frac{2\gamma_n}{\theta_n}\nu_n(\1)\int_0^{\theta\theta_n} \P^{(n)}(M_{V,V'}> s)\d s=0
\end{align}
by the second limit of \eqref{def:wn}, \eqref{ergodic}, and \eqref{fnsn:est} with $s_n$ replaced by $\theta_n$ and with $t_2=\theta$ and $t_1=0$ since $(\theta_n)$ is also a slow sequence. (The use of $M_{V,V'}$ allows us to circumvent Lemma~\ref{lem:UVbdd} due to the explosion of the bound in \eqref{UVbdd:1} as $t\to 0$.) We have proved \eqref{ineq:Itheta}.

Finally, the required tightness follows from \eqref{ineq:Itheta}, the strong Markov property of the particle system and Aldous's criterion for tightness \cite[Proposition~VI.4.5 on p.356]{JS}. The detail is similar to the proof of \cite[Theorem~5.1 (1)]{CCC}.  \medskip

\noindent {\rm (2$\cc$)} Recall the equation \eqref{def:R} of $R_\sigma$, and the explicit form of $R^w$ can be read from \eqref{qwq:exp}. Then by the same reason for \eqref{Dsigma:22}, the coefficient of $R_\sigma$ satisfies 
\begin{align}\label{bdd:R}
\sum_{x,y\in E}\pi(x)|\1_\sigma\circ\xi(y)-\1_\sigma\circ\xi (x)|q(x,y)|R^w(x,y,\xi)|\leq C_{\ref{bdd:R}}\sum_{\sigma'\in S}\overline{f_{\sigma\sigma'}}(\xi),
\end{align}
where $C_{\ref{bdd:R}}$ depends only on $\Pi$ and $\#S$. From \eqref{bdd:R},
 the argument in (1$\cc$) applies again.  
\medskip

\noindent {\rm (3$\cc$)} The proof follows from a slight modification of the proof of (4$\cc$) below even without the additional assumption $\lim_n \gamma_n\nu_n(\1)/\theta_n=0$. \medskip 

\noindent {\rm (4$\cc$)}
We start with the convergence of the second coordinate in \eqref{def:product}. Define a density function $\widetilde{p}_{\sigma}(\xi)$ on $S^{E_n}$ such that the stationary weights $\pi^{(n)}(x)$ in $p_\sigma(\xi)$ are replaced by $\pi^{(n)}(x)^2/\nu_n(\1)$. From \eqref{def:<M1>}, the following equality holds under $\P^{w_n}_{\xi}$ for all $\xi\in S^{E_n}$:
\begin{align}\label{eq:Mn}
\begin{split}
&\quad \frac{\gamma_n}{\theta_n}\langle M_\sigma^{(n)},M_{\sigma}^{(n)}\rangle_t\\
&=\gamma_n\nu_n(\1)\int_0^t \sum_{\sigma'\in S\setminus\{\sigma\}}[p_{\sigma'\sigma}(\xi_{\theta_ns})+p_{\sigma\sigma'}(\xi_{\theta_ns})]\d s\\
&\quad +\gamma_n\nu_n(\1)\int_0^t \Big([1-\widetilde{p}_{\sigma}(\xi_{\theta_ns})]\mu_n(\sigma)+\widetilde{p}_\sigma(\xi_{\theta_ns})\mu_n(S\setminus\{\sigma\})\Big)\d s\\
&\quad + \frac{\gamma_n\nu_n(\1)}{\theta_n}\cdot w_n\theta_n\int_0^{t} \widetilde{R}^{(n)}_{w_n}(\xi_{\theta_n s})\d s.
\end{split}
\end{align}
Here, $\widetilde{R}^{(n)}_{w_n}$ can be bounded in the same way as \eqref{Dsigma:22}. Note that due to the use of $\widetilde{R}^{(n)}_{w_n}$, we only involve the first term $q(x,y)$ in  the expansion \eqref{qwq:exp} of $q^{w_n}(x,y,\xi)$. 

Let us explain how the required convergence of the second coordinate in \eqref{def:product} follows \eqref{eq:Mn}. First, since $\lim_n \gamma_n\nu_n(\1)/\theta_n=0$ by assumption, the bound for $\widetilde{R}^{(n)}_{w_n}$ mentioned above and the proof of (1$\cc$) show that the continuous process defined by the last integral of \eqref{eq:Mn} converges to zero in distribution. Next, for all $0\leq T_0<T_1<\infty$, it holds that 
\begin{align*}
&\eqspace\gamma_n\nu_n(\1)\int_{T_0}^{T_1} \Bigg(\sum_{\sigma'\in S\setminus\{\sigma\}}\widetilde{p}_{\sigma'}(\xi_{\theta_ns})\mu_n(\sigma)+\widetilde{p}_\sigma(\xi_{\theta_ns})\mu_n(S\setminus\{\sigma\})\Bigg)\d s\\
&\leq \frac{\gamma_n\nu_n(\1)}{\theta_n}\cdot \mu_n(\1)\theta_n\cdot (T_1-T_0)\xrightarrow[n\to\infty]{} 0
\end{align*}
by \eqref{def:mun} and the assumption $\lim_n \gamma_n\nu_n(\1)/\theta_n=0$. Hence, the second integral on the right-hand side of \eqref{eq:Mn} converges to zero in distribution as a continuous process. 

The sequence of laws of the first integrals on the right-hand side of \eqref{eq:Mn} is tight for a reason similar to \eqref{MVV':conv-tight}. Moreover, since  $\overline{\kappa}_0$ in \eqref{cond:kappa0} is $1$ by Proposition~\ref{prop:sn-selection}, a slight modification of the proof of \eqref{closure} shows that 
\[
\gamma_n\nu_n(\1)\int_0^t \sum_{\sigma'\in S\setminus\{\sigma\}}[p_{\sigma'\sigma}(\xi_{\theta_ns})+p_{\sigma\sigma'}(\xi_{\theta_ns})]\d s-\int_0^t  p_\sigma(\xi_{\theta_ns})[1-p_\sigma(\xi_{\theta_ns})]\d s\xrightarrow[n\to\infty]{\rm (d)} 0
\]
as processes. See \eqref{LwL:001} and \cite[Proposition~6.1]{CCC}.  By this convergence and the explanation in the preceding paragraph, \eqref{eq:Mn} thus shows that the sequence of laws of the second coordinates in \eqref{def:product}  as processes converges to zero in distribution. 

To get the $C$-tightness of the sequence of laws of the first coordinates in \eqref{cond:kappa0}, first, note that the tightness readily follows from the $C$-tightness of $(\gamma_n/\theta_n)\langle M^{(n)}_\sigma,M^{(n)}_\sigma\rangle$ proven above \cite[Theorem~VI.4.13 on p.358]{JS}. For the stronger $C$-tightness, note that the jump sizes of $p_\sigma(\xi_t)$ are given by $\pi(x)$'s. Hence,
\begin{align}\label{Mjump}
(\gamma_n/\theta_n)^{1/2}\sup_{t\geq 0}|\Delta M^{(n)}_\sigma(t)|\leq (\gamma_n/\theta_n)^{1/2}\pi^{(n)}_{\max},
\end{align}
where the right-hand side tends to zero by \eqref{cond:pi} and the assumption  $\lim_n \gamma_n\nu_n(\1)/\theta_n=0$. The required $C$-tightness now follows from \cite[Proposition~3.26 in Chapter VI on p.351]{JS}. 

Finally, the above argument for the tightness of $(\gamma_n/\theta_n)^{1/2} M_\sigma^{(n)}$ also proves \eqref{ineq:L2-bdd}. 
\medskip

\noindent {\rm (5$\cc$)} The proof is almost identical to the proof of (4$\cc$)  for the second coordinate of \eqref{def:product},  if we start with \eqref{def:<M2>}. The details are omitted. 
\end{proof}

\subsection{The replicator equation and the Wright--Fisher fluctuations}
In this subsection, we complete the proof of Theorem~\ref{thm:main} and give the proof of Corollary~\ref{cor:symmetric}.\\

\begin{proof}[Completion of the proof of Theorem~\ref{thm:main}]
By \eqref{eq:Dsigma}, \eqref{def:In}, \eqref{closure} and Lemma~\ref{lem:tight} (1$\cc$)--(3$\cc$), we have proved that the following  vector process converges to zero in distribution:
\begin{align*}
&p_\sigma(\xi_{\theta_n t})-\int_0^tw_\infty\Bigg(\sum_{\stackrel{\scriptstyle \sigma_0,\sigma_3\in S}{ \sigma_0\neq\sigma}}\Pi(\sigma,\sigma_3) Q_{\sigma_0,\sigma\sigma_3}\big(p(\xi_{\theta_n s})\big)-\sum_{\stackrel{\scriptstyle \sigma_2,\sigma_3\in S}{\sigma_2\neq\sigma}}\Pi(\sigma_2,\sigma_3) Q_{\sigma,\sigma_2\sigma_3}\big(p(\xi_{\theta_n s})\big)\Bigg)\d s\\
&\eqspace-\int_0^t \Bigg(\mu_\infty(\sigma)[1-p_{\sigma}(\xi_{\theta_ns})]-\mu_\infty(S\setminus\{\sigma\}) p_\sigma(\xi_{\theta_ns})\Bigg)\d s,\quad \sigma\in S,
\end{align*}
where the polynomials $Q_{\sigma_0,\sigma_2\sigma_3}$ are defined in \eqref{def:Qsigma}. Hence, the sequence of laws of $p(\xi_{\theta_nt})$ is $C$-tight, and $p(\xi_{\theta_nt})$ converges in distribution to $X(t)$ as processes, where $X$ is the unique solution to the following system:
\begin{align}\label{p1:lim0}
\dot{X}_\sigma=w_\infty Q_\sigma(X)+\mu_\infty(\sigma)(1-X_\sigma)-\mu_\infty(S\setminus\{\sigma\}) X_\sigma,\quad \sigma\in S,
\end{align}
and the polynomial $Q_\sigma(X)$ in \eqref{p1:lim0} is given by
\begin{align}\label{def:grandQ}
Q_\sigma(X)&=\sum_{\stackrel{\scriptstyle \sigma_0,\sigma_3\in S}{\sigma_0\neq \sigma}}\Pi(\sigma,\sigma_3)Q_{\sigma_0,\sigma\sigma_3}(X)-\sum_{ \stackrel{\scriptstyle \sigma_2,\sigma_3\in S}{\sigma_2\neq \sigma}}\Pi(\sigma_2,\sigma_3)Q_{\sigma,\sigma_2\sigma_3}(X).
\end{align}

To simplify \eqref{def:grandQ} to the required form in \eqref{p1:lim}, note that the constraints $\sigma_0\neq \sigma$ and $\sigma_2\neq \sigma$ in \eqref{def:grandQ} can be removed from the definition of $Q_\sigma(X)$ by cancelling repeating terms. In doing so, we extend the definition $Q_{\sigma_0,\sigma_2\sigma_3}(X)$ to $\sigma_0=\sigma_2$ by the same formula in  \eqref{def:Qsigma}, but only in this proof. We also  lighten notation by the following: $A=\overline{\kappa}_{(2,3)|0}-\overline{\kappa}_{0|2|3}$ and $B=\overline{\kappa}_{(0,3)|2}-\overline{\kappa}_{0|2|3}$ and $C=\overline{\kappa}_{0|2|3}$. Then by \eqref{def:grandQ},
\begin{align*}
&Q_\sigma(X)=\sum_{\sigma_0,\sigma_3\in S}\Pi(\sigma,\sigma_3)\left.\left(\1_{\{\sigma_2=\sigma_3\}}AX_{\sigma_0}X_{\sigma_2}+\1_{\{\sigma_0=\sigma_3\}}BX_{\sigma_0}X_{\sigma_2}+CX_{\sigma_0}X_{\sigma_2}X_{\sigma_3}\right)\right|_{\sigma_2=\sigma}\\
&\quad \;-\sum_{\sigma_2,\sigma_3\in S}\Pi(\sigma_2,\sigma_3)\left.\left(\1_{\{\sigma_2=\sigma_3\}}AX_{\sigma_0}X_{\sigma_2}+\1_{\{\sigma_0=\sigma_3\}}BX_{\sigma_0}X_{\sigma_2}+CX_{\sigma_0}X_{\sigma_2}X_{\sigma_3}\right)\right|_{\sigma_0=\sigma}\\
&=X_\sigma \sum_{\sigma_0\in S}A\Pi(\sigma,\sigma)X_{\sigma_0}+X_\sigma\sum_{\sigma_0\in S}B
\Pi(\sigma,\sigma_0)X_{\sigma_0}+X_\sigma \sum_{\sigma_3\in S}C\Pi(\sigma,\sigma_3)X_{\sigma_3}\\
&\quad \;-X_\sigma\sum_{\sigma_2\in S}A\Pi(\sigma_2,\sigma_2)X_{\sigma_2}-X_\sigma \sum_{\sigma_2\in S}B\Pi(\sigma_2,\sigma)X_{\sigma_2}\\
&\quad -
X_\sigma\sum_{\sigma_2\in S}\Bigg(\sum_{\sigma_3\in S}C\Pi(\sigma_2,\sigma_3)X_{\sigma_3}\Bigg)X_{\sigma_2}\\
&=X_\sigma\Bigg(A\Pi(\sigma,\sigma)+\sum_{\sigma'\in S}B
[\Pi(\sigma,\sigma')-\Pi(\sigma',\sigma)]X_{\sigma'}+\sum_{\sigma'\in S}C\Pi(\sigma,\sigma')X_{\sigma'}\Bigg)\\
&\quad -X_\sigma \sum_{\sigma'\in S}\Bigg(A\Pi(\sigma',\sigma')+\sum_{\sigma''\in S}C\Pi(\sigma',\sigma'')X_{\sigma''}\Bigg)X_{\sigma'}.
\end{align*}
Note that we have used the property $\sum_{\sigma}X_{\sigma}=1$ in the last two equalities. The last equality is enough for the required form in \eqref{p1:lim} upon recalling \eqref{p1:lim0} and involving the polynomials $F_\sigma(X)$ and $\widetilde{F}_\sigma(X)$ in \eqref{F1} and \eqref{F2}. Moreover, \eqref{kappa:>} holds by Proposition~\ref{prop:sn-selection}, \eqref{ineq:MUVcompare}, and Proposition~\ref{prop:kell} (1$\cc$) and (4$\cc$). 

For the proof of (2$\cc$), notice that  by (1$\cc$) and Lemma~\ref{lem:tight} (4$\cc$)--(5$\cc$), the following convergence of matrix processes holds:
\[
\left(\frac{\gamma_n}{\theta_n}\langle M_\sigma^{(n)},M_{\sigma'}^{(n)}\rangle_t\right)_{\sigma,\sigma'\in S}\xrightarrow[n\to\infty]{\rm (d)}\left(\int_0^t X_\sigma(s)[\delta_{\sigma,\sigma'}-X_{\sigma'}(s)]\d s\right)_{\sigma,\sigma'\in S}.
\]
By this convergence and \eqref{ineq:L2-bdd}, the standard martingale problem argument shows that every weakly convergent subsequence of $((\gamma_n/\theta_n)^{1/2} M_\sigma^{(n)};\sigma\in S)$ converges to a continuous vector $L_2$-martingale $(M_\sigma^{(\infty)};\sigma\in S)$ with a quadratic variation matrix given by $(\int_0^t X_\sigma(s)[\delta_{\sigma,\sigma'}-X_{\sigma'}(s)]\d s;\sigma,\sigma'\in S)$. See \cite[Proposition~1.12 in Chapter~IX on p.525]{JS} and the proof of \cite[Theorem~1.10 in Chapter~XIII on pp.519--520]{RY}. Hence, the limiting vector martingale $(M_\sigma^{(\infty)};\sigma\in S)$ is a Gaussian process with covariance matrix $(\int_0^t X_\sigma(s)[\delta_{\sigma,\sigma'}-X_{\sigma'}(s)]\d s;\sigma,\sigma'\in S)$ \cite[Exercise (1.14) in Chapter~V on p.186]{RY}. Moreover, by uniqueness in law of this Gaussian process, the convergence holds along the whole sequence of the vector martingale $(\gamma_n/\theta_n)^{1/2}M^{(n)}$. The proof is complete.
\end{proof}

\begin{proof}[Proof of Corollary~\ref{cor:symmetric}]
Write $u$ for $X_1$. In this case, $X_0=1-u$ and the polynomial $Q_1(X)$ defined by \eqref{def:grandQ} simplifies to
\[
Q_1=(b-c)Q_{0,11}-cQ_{0,10}-bQ_{1,01}=(Q_{0,11}-Q_{1,01})b-(Q_{0,11}+Q_{0,10})c.
\]
By \eqref{def:Qsigma}, the coefficient of $c$ is given by 
\begin{align}\label{coeff:c}
\begin{split}
-Q_{0,11}(X)-Q_{0,10}(X)&=-(\overline{\kappa}_{(2,3)|0}-\overline{\kappa}_{0|2|3})(1-u)u-\overline{\kappa}_{0|2|3}(1-u)u^2\\
&\quad -(\overline{\kappa}_{(0,3)|2}-\overline{\kappa}_{0|2|3})(1-u)u-\overline{\kappa}_{0|2|3}(1-u)^2u,
\end{split}
\end{align}
and the coefficient of $b$ is
\begin{align}\label{coeff:b}
\begin{split}
Q_{0,11}(X)-Q_{1,01}(X)&=(\ok_{(2,3)|0}-\ok_{0|2|3})(1-u)u+\overline{\kappa}_{0|2|3}u^2(1-u)u^2\\
&\quad -(\overline{\kappa}_{(0,3)|2}-\overline{\kappa}_{0|2|3})(1-u)u-\overline{\kappa}_{0|2|3}(1-u)u^2.
\end{split}
\end{align}

These two coefficients can be simplified by using the definition of $Q_{\sigma_0,\sigma_2\sigma_3}$ and Proposition~\ref{prop:kell} (4$\cc$), if we follow the algebra in the proof of Lemma~\ref{lem:D} that simplifies \eqref{eq:Dsigma} to \eqref{eq:Dsigma1}. For example, a similar argument as in the proof of Lemma~\ref{lem:RW} shows that \eqref{voter:density} holds with 
\[
Q_{0,01}(X)=(\overline{\kappa}_{(0,2)|3}-\overline{\kappa}_{0|2|3})(1-u)u+\overline{\kappa}_{0|2|3}(1-u)^2u,
\]
and so
\begin{align*}
Q_{0,11}(X)+Q_{0,01}(X)&=(\overline{\kappa}_{(2,3)|0}-\overline{\kappa}_{0|2|3})(1-u)u+\overline{\kappa}_{0|2|3}(1-u)u^2\\
&\eqspace +(\overline{\kappa}_{(0,2)|3}-\overline{\kappa}_{0|2|3})(1-u)u+\overline{\kappa}_{0|2|3}(1-u)^2u \\
&=(1-u)u\ok_{3},
\end{align*}
where the last equality follows from Proposition~\ref{prop:kell} (4$\cc$). In this way, we can obtain from \eqref{coeff:c} and \eqref{coeff:b} that $Q_1(X)=[(\ok_3-\ok_1)b-\ok_2 c](1-u)u$. Moreover, by Proposition~\ref{prop:kell}, we can pass limit along the whole sequence to get this limiting polynomial $Q_1(X)$.
 \end{proof}

\section{Further properties of coalescing lineage distributions}\label{sec:coal}

\subsection{A comparison with mutations}
In this section, we prove some auxiliary results for the proof of Theorem~\ref{thm:main}. The next proposition estimates the voter model $(\xi_t)$ under $\P^0$ by its selection mechanism, that is, by the updates from $\{\Lambda(x,y);x,y\in E\}$. The proof extends \cite[Proposition~3.2]{CC}. Recall the notation  in Section~\ref{sec:dynamics} for the coalescing Markov chains. 

\begin{prop}\label{prop:mutation}
{\rm (1$\cc$)} Let $f:S\times S\times S\to [-1,1]$ be a function such that $f(\sigma,\sigma,\cdot)=0$ for all $\sigma\in S$. 
Then for all $t\in (0,\infty)$ and $x,y,z\in E$,
\begin{align}\label{ineq:mutation}
\begin{split}
& \sup_{\xi\in S^E}\Big|\E^0_\xi\big[f\big(\xi_t(x),\xi_t(y),\xi_t(z)\big)\big]-\E\big[f\big(\xi(B^x_t),\xi(B^y_t),\xi(B^z_t)\big)\big]\Big|\\
&\quad \leq  \big(1-\e^{-2\mu(\1)t}\big)\P(M_{x,y}>t)+\1_{x\neq y}\big(1-\e^{-\mu(\1)t}\big)\P(M_{x,z}\wedge M_{y,z}>t)\\
&\quad \quad +2\mu(\1)\int_0^t \P(M_{x,y}>s)\d s+\1_{x\neq y}\mu(\1)\int_0^t \P(M_{x,z}\wedge M_{y,z}>s)\d s.
\end{split}
\end{align}
{\rm (2$\cc$)} For all $(\sigma_0,\sigma_2,\sigma_3)\in S\times S\times S$ with $\sigma_0\neq \sigma_2$, $t\in(0,\infty)$ and $x\in E$,
\begin{align}\label{claim:w0-mutation}
\begin{split}
&\eqspace\sup_{\xi\in S^E}\big|\E^0_{\xi^x}\left[\,\overline{f_{\sigma_0} f_{\sigma_2\sigma_3}}(\xi_t)\right]-\E^0_\xi\left[\,\overline{f_{\sigma_0} f_{\sigma_2\sigma_3}}(\xi_t)\right]\big|\\
&\leq \sum_{\ell\in \{0,2,3\}}4 \P(M_{U_0,U_2}>t,B^{U_\ell}_t=x)\\
&\quad +\sum_{\ell\in \{0,2,3\}} 4\mu(\1)\int_0^t \P(M_{U_0,U_2}>s,B^{U_\ell}_t=x)\d s.
\end{split}
\end{align}
\noindent {\rm (3$\cc$)} For all $(\sigma_0,\sigma_1)\in S\times S$ with $\sigma_0\neq \sigma_1$, $t\in(0,\infty)$ and $x\in E$,
\begin{align}\label{claim:w0-mutation1}
\begin{split}
&\eqspace\sup_{\xi\in S^E}\big|\E^0_{\xi^x}\left[\,\overline{f_{\sigma_0\sigma_1}}(\xi_t)\right]-\E^0_\xi\left[\,\overline{ f_{\sigma_0\sigma_1}}(\xi_t)\right]\big|\\
&\leq \sum_{\ell\in \{0,1\}}4 \P(M_{U_0,U_1}>t,B^{U_\ell}_t=x)\\
&\quad +\sum_{\ell\in \{0,1\}} 4\mu(\1)\int_0^t \P(M_{U_0,U_1}>s,B^{U_\ell}_t=x)\d s.
\end{split}
\end{align}
\end{prop}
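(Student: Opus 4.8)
The plan is to derive all three parts from a single \emph{coalescing dual with mutation}, read off from the graphical representation in \eqref{eq:voter}: tracing the Poisson processes $\Lambda(x,y)$ and $\Lambda^\sigma(x)$ backwards in time, the value $\xi_t(x)$ is determined by a rate-$1$ $q$-random walk that coalesces with the walks issued from other sites upon meeting, and that is additionally \emph{frozen} to type $\sigma$ at rate $\mu(\sigma)$ (total freezing rate $\mu(\1)$), its type being the one dictated by the most recent backward freezing event, or $\xi$ evaluated at the time-$t$ ancestor if no freezing occurred. Specializing to $\mu\equiv 0$ recovers the pure coalescing duality \eqref{dual:1}; I would first record this identity together with its mutation extension carefully, since everything rests on it. Writing $\{B^x\}$ for the pure coalescing walks and superimposing independent freezing clocks realizes both duals on one probability space.

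For part (1$\cc$), I would couple the two duals through the shared walks $\{B^x,B^y,B^z\}$: on the event that none of the three lineages is frozen during $[0,t]$ the mutation dual and the pure dual produce identical types, so the difference in \eqref{ineq:mutation} is supported on the event that some relevant lineage freezes. The hypothesis $f(\sigma,\sigma,\cdot)=0$ with $|f|\le 1$ lets me discard every configuration in which the $x$- and $y$-lineages have coalesced, restricting attention to $\{M_{x,y}>t\}$ (and, for the third coordinate, to $\{M_{x,z}\wedge M_{y,z}>t\}$). Splitting according to coalescence and freezing: on $\{M_{x,y}>t\}$ the $x$- and $y$-lineages stay distinct throughout, so the chance that at least one of them freezes is $1-\e^{-2\mu(\1)t}$, producing $(1-\e^{-2\mu(\1)t})\P(M_{x,y}>t)$, and a single freezing of the distinct $z$-lineage contributes $\1_{x\ne y}(1-\e^{-\mu(\1)t})\P(M_{x,z}\wedge M_{y,z}>t)$; the remaining possibilities, where a freezing at an intermediate time $s$ precedes a coalescence, are controlled by conditioning on $s$ and integrating $\mu(\1)\P(M_{\cdot,\cdot}>s)\,\d s$, which yields the two integral terms.

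Parts (2$\cc$) and (3$\cc$) concern a change of the \emph{initial} configuration at a single site $x$. Here I would first use reversibility of $q$ to re-root the auxiliary chains from \eqref{def:Well}: the three sites carrying $\sigma_0,\sigma_2,\sigma_3$ in $\overline{f_{\sigma_0}f_{\sigma_2\sigma_3}}$ can be realized as sites $U_0,U_2,U_3$ of a stationary $q$-chain (and those of $\overline{f_{\sigma_0\sigma_1}}$ as $U_0,U_1$), so that $\sigma_0\ne\sigma_2$ (resp. $\sigma_0\ne\sigma_1$) matches the first two coordinates. Averaging over these chains and applying the mutation dual sitewise, $\E^0_\xi[\overline{f_{\sigma_0}f_{\sigma_2\sigma_3}}(\xi_t)]$ depends on $\xi$ only through the time-$t$ ancestors of the \emph{unfrozen} lineages; hence the difference between starting from $\xi^x$ and from $\xi$ is supported on $\{B^{U_\ell}_t=x\}$ for some $\ell\in\{0,2,3\}$, while $\sigma_0\ne\sigma_2$ again forces $M_{U_0,U_2}>t$. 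The unfrozen contribution gives the main sum $\sum_\ell \P(M_{U_0,U_2}>t,B^{U_\ell}_t=x)$, and the freezing corrections, handled as in (1$\cc$) by conditioning on the first freezing time, give the integral terms in \eqref{claim:w0-mutation}; part (3$\cc$) is the two-point version of the same argument, yielding \eqref{claim:w0-mutation1}.

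The main obstacle is the mutation bookkeeping: organizing the freezing and coalescence events so that exactly the stated combination of the non-integral factors $1-\e^{-2\mu(\1)t}$, $1-\e^{-\mu(\1)t}$ and the integral corrections emerges, and—crucially in (2$\cc$) and (3$\cc$)—simultaneously tracking the spatial position $\{B^{U_\ell}_t=x\}$ of each lineage through coalescences and freezings. Making the mutation dual and its duality identity precise enough to license these manipulations is the foundational step on which the rest is routine.
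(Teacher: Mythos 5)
Your proposal follows essentially the same route as the paper: your ``frozen-lineage'' dual is exactly the paper's pathwise duality $\xi_t(a)=M(a,t)\1_{\{e(a,t)\leq t\}}+\xi\big(B^{a,t}_t\big)\1_{\{e(a,t)>t\}}$ built from time-reversing the graphical representation, your case analysis in (1$\cc$) reproduces the paper's partition $A_1$--$A_4$ (mutation with $M_{x,y}>t$; mutation before coalescence, giving the integral terms; coalescence before mutation, killed by $f(\sigma,\sigma,\cdot)=0$; no mutation on the $x,y$-lineages, with the $z$-lineage handled by the analogous sub-partition), and your treatment of (2$\cc$)--(3$\cc$) --- localizing the $\xi^x$-versus-$\xi$ difference on $\{B^{U_\ell,t}_t=x\}$ and then stripping mutations by the (1$\cc$) mechanism, with $\sigma_0\neq\sigma_2$ yielding $M_{U_0,U_2}>t$ in the mutation-free term --- is precisely the paper's two-step bound. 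One point of phrasing to tighten: coalescence of the $x$- and $y$-lineages alone does not let you ``restrict attention to $\{M_{x,y}>t\}$,'' since only coalescence \emph{before any freezing} forces $\xi_t(x)=\xi_t(y)$; but as you separately account for freezing-before-coalescence through the integral terms, your bookkeeping coincides with the paper's.
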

\mbox{}

The proof of this proposition extends the proof of \cite[Proposition~3.2]{CC} and is based on the pathwise duality between the voter model and the coalescing Markov chains. The relation follows from time reversal of the stochastic integral equations in Section~\ref{sec:mainresults} of the voter model. More specifically, for fixed $t\in (0,\infty)$, we define  a system of coalescing $q$-Markov chains $\{B^{a,t};a\in E\}$ such that in the absence of mutation, $B^{a,t}$ traces out the time-reversed ancestral line that determines the type at $(a,t)$ under the voter model. For example, if $s$ is the last jump time of $\{\Lambda_r(a,b);b\in E,r\in (0,t]\}$ and $\Lambda(a,c)$ causes this jump, the state of $B^{a,t}$ stays at $a$ before transitioning to $B^{a,t}_{t-s}=c$. Similarly, with the Poisson processes $\Lambda^\sigma$ driving the mutations, we can define $e(a,t)$ and $M(a,t)$ for the time and the type from the first mutation event on the trajectory of $B^{a,t}$, with $e(a,t)=\infty$ if there is no mutation. Since $e(a,t)>t$ if and only if $e(a,t)=\infty$, we have
\begin{align}\label{prob:dual}
\xi_t(a)=M(a,t)\1_{\{e(a,t)\leq t\}}+\xi\big(B^{a,t}_t\big)\1_{\{e(a,t)>t\}},\quad\forall\;a\in E,\quad\mbox{$\P^0_\xi$-a.s.}
\end{align}
More details can be seen by modifying the description in \cite[Section~6.1]{CC}. In the absence of mutation, this relation between the duality and the stochastic integral equations is  known in \cite{MT}.

We also observe two identities for the probability distributions of  the mutation times $e(a,t)$'s when we condition on $\G\defeq\sigma(\Lambda(a,b);a,b\in E)$. Let $x,y\in E$. Write $0=J_0<J_1<\cdots<J_N<J_{N+1}=  t$ such that $J_1,J_2,\cdots,J_N$ are the jump times of the bivariate chain $(B^{x,t},B^{y,t})$. Hence, $B^{x,t}_r=x_k$ and $B_r^{y,t}=y_k$, for all  $r\in [J_{k},J_{k+1})$ and $0\leq k\leq N$. First, fix $s\in [0,\infty)$, and note that conditioned on $\G$, whether mutation occurs along the trajectory of $B^{x,t}$ over $[J_k,s\wedge J_{k+1})$ depends on whether $\sum_{\sigma\in S}[\Lambda^\sigma_{(t-J_k)-}(x_k)-\Lambda_{(t-s\wedge J_{k+1})-}^\sigma(x_k)]\geq 1$. Note that $s'\mapsto \sum_{\sigma\in S}[\Lambda^\sigma_t(x_k)-\Lambda^\sigma_{(t-s')-}(x_k)]$ is a Poisson process with rate $\mu(\1)$. Hence, summing over $\sum_{\sigma\in S}[\Lambda^\sigma_{(t-J_k)-}(x_k)-\Lambda_{(t-s\wedge J_{k+1})-}^\sigma(x_k)]$ in $k$, we deduce
\begin{align}\label{eq:e1}
 \P\big(e(x,t)\leq s|\G\big)=1-\e^{-\mu(\1)s},\quad s\geq 0.
\end{align}
Second, note that $x_k\neq y_k$ for all $k$ such that $J_{k+1}\leq M_{x,y}\wedge  s$. In this case, mutations in $[J_{k},J_{k+1})$ along the trajectories of $B^{x,t}$ and $B^{y,t}$ are determined by two disjoint subsets of the Poisson processes $\{\Lambda^\sigma(a);\sigma\in S,a\in E\}$. Hence, \eqref{eq:e1} generalizes to the following identity: 
\begin{align}
 \P\big(e(x,t)\wedge e(y,t)\leq s\wedge M_{x,y}|\G\big)= 1-\e^{-2\mu(\1)(s\wedge M_{x,y})},\quad s\geq 0.\label{eq:e2}
\end{align}

\begin{proof}[\bf Proof of Proposition~\ref{prop:mutation}]
\noindent {\rm (1$\cc$)} Set a partition $\{A_j\}_{1\leq j\leq 4}$ as follows:
\begin{align}
\begin{split}\label{A1-A4}
A_1&=\{e(x,t)\wedge e(y,t)\leq t<M_{x,y}\},\\
A_2&=\{e(x,t)\wedge e(y,t)\leq M_{x,y}\leq t\},\\
A_3&=\{M_{x,y}<e(x,t)\wedge e(y,t)\leq t\},\\
A_4&=\{e(x,t)\wedge e(y,t)>t\}.
\end{split}
\end{align}
Then consider the corresponding differences for the left-hand side of \eqref{ineq:mutation}:
\begin{align}\label{def:Deltaj}
\Delta_j=\E^0_\xi\big[f\big(\xi_t(x),\xi_t(y),\xi_t(z)\big);A_j\big]-\E\big[f\big(\xi(B^x_t),\xi(B^y_t),\xi(B^z_t)\big);A_j\big],\quad 1\leq j\leq 4.
\end{align}
Let $\mathbf e_1$ and $\mathbf e_2$ be i.i.d. exponential random variables with mean $1/\mu(\1)$. It follows from \eqref{eq:e2} and the independence between selection and mutation that
\begin{align}\label{ineq:Delta1}
|\Delta_1|&\leq \P(\mathbf e_1\wedge \mathbf e_2\leq t)\P(M_{x,y}>t)= \big(1-\e^{-2\mu(\1)t}\big)\P(M_{x,y}>t),\\
\label{ineq:Delta2}
|\Delta_2|&\leq\int_0^t \P(t\geq M_{x,y}>s)\P(\mathbf e_1\wedge \mathbf e_2\in \d s)\leq 2\mu(\1)\int_0^t \P(M_{x,y}>s)\d s.
\end{align}
On $A_3$, $B^{x,t}_t=B^{y,t}_t$ by coalescence, and hence, $\xi_t(x)=\xi_t(y)$ by \eqref{prob:dual}. It follows from the assumption on $f$ that both of the expectations defining $\Delta_3$ are zero. 

To bound $\Delta_4$,  fix $z\in E$ and partition $A_4$ into the following four sets:
\begin{align*}
A_{41}&=\{e(x,t)\wedge e(y,t)>t, e(z,t)\leq t<M_{x,z}\wedge M_{y,z}\},\\
A_{42}&=\{e(x,t)\wedge e(y,t)>t, e(z,t)\leq M_{x,z}\wedge M_{y,z}\leq t\},\\
A_{43}&=\{e(x,t)\wedge e(y,t)>t,M_{x,z}\wedge M_{y,z}< e(z,t)\leq t\},\\
A_{44}&=\{e(x,t)\wedge e(y,t)>t,e(z,t)>t\}.
\end{align*}
Then define $\Delta_{4k}$ for $1\leq k\leq 4$ as in \eqref{def:Deltaj} by replacing $A_j$ with $A_{4k}$. By \eqref{eq:e1} and similar arguments for \eqref{ineq:Delta1} and \eqref{ineq:Delta2}, we get
\begin{align}\label{ineq:Delta412}
\begin{split}
|\Delta_{41}|&\leq \1_{x\neq y}\big(1-\e^{-\mu(\1)t}\big)\P(M_{x,z}\wedge M_{y,z}>t),\\
|\Delta_{42}|&\leq \1_{x\neq y}\mu(\1)\int_0^t \P(M_{x,z}\wedge M_{y,z}>s)\d s,
\end{split}
\end{align}
where the use of the indicator function $\1_{x\neq y}$ follows from the assumption of $f$. For $\Delta_{43}$, it is zero because $A_{43}=\varnothing$. Indeed, on  $\{M_{x,z}\wedge M_{y,z}< e(z,t)\leq t\}$,  either $e(x,t)\leq t$ or $e(y,t)\leq t$ since either $e(x,t)=e(z,t)$ (if $M_{x,z}\wedge M_{y,z}=M_{x,z}$) or $e(y,t)=e(z,t)$ (if $M_{x,z}\wedge M_{y,z}=M_{y,z}$). Hence, $\{M_{x,z}\wedge M_{y,z}< e(z,t)\leq t\}$ does not intersect $\{e(x,t)\wedge e(y,t)>t\}$. Finally, $\Delta_{44}=0$ by \eqref{prob:dual} now that the random variables being taken expectation are actually equal. 

In summary, we have proved that $\Delta_3=\Delta_{43}=\Delta_{44}=0$. In addition, $\Delta_{1}$, $\Delta_2$, $\Delta_{41}$ and $\Delta_{42}$ satisfy \eqref{ineq:Delta1}, \eqref{ineq:Delta2} and \eqref{ineq:Delta412}. We have proved \eqref{ineq:mutation}.\medskip

\noindent (2$\cc$) For the left-hand side of \eqref{claim:w0-mutation}, we use \eqref{prob:dual} to write
\begin{align}
&\eqspace\E^0_{\xi^x}\left[\,\overline{f_{\sigma_0} f_{\sigma_2\sigma_3}}(\xi_t)\right]-\E^0_\xi\left[\,\overline{f_{\sigma_0} f_{\sigma_2\sigma_3}}(\xi_t)\right]\notag\\
\begin{split}\label{A1-A4:0000}
&=\E\Bigg[\prod_{j\in \{0,2,3\}}\1_{\sigma_j}\Big(M(U_j,t)\1_{\{e(U_j,t)\leq t\}}+\xi^x(B^{U_j,t}_{t})\1_{\{e(U_j,t)> t\}}\Big)\\
&\eqspace-\prod_{j\in \{0,2,3\}}\1_{\sigma_j}\Big(M(U_j,t)\1_{\{e(U_j,t)\leq t\}}+\xi(B^{U_j,t}_{t})\1_{\{e(U_j,t)> t\}}\Big)\Bigg].
\end{split}
\end{align}
Mutation neglects the role of the initial condition. 
Hence, to get a nonzero value for the difference inside the foregoing expectation, we cannot have $e(U_j,t)\leq t$ for all $j\in \{0,2,3\}$.
 In this case, at least one of the sums
$\1_{\sigma_j}\circ \xi(B^{U_j,t}_t)+\1_{\sigma_j}\circ \xi^x(B^{U_j,t}_t)$, $j\in \{0,2,3\}$, has to be nonzero. We must have $B^{U_j,t}_t=x$ for some $j\in \{0,2,3\}$. By bounding the indicator functions associated with $\sigma_3$ by $1$, we obtain from \eqref{A1-A4:0000} that 
\begin{align}
&\eqspace\big|\E^0_{\xi^x}\left[\,\overline{f_{\sigma_0} f_{\sigma_2\sigma_3}}(\xi_t)\right]-\E^0_\xi\left[\,\overline{f_{\sigma_0} f_{\sigma_2\sigma_3}}(\xi_t)\right]\big|\notag\\
&\leq \sum_{\ell\in \{0,2,3\}}\left(\E_{\xi^x}+\E_{\xi}\right)\Bigg[\prod_{j\in \{0,2\}}\1_{\sigma_j}\circ \xi_t(U_j);B^{U_\ell,t}_t=x\Bigg],\quad\forall\;x\in E.\label{A1-A4:main1}
\end{align}

The method in (1$\cc$) now enters to remove mutations in each of the two expectations in the $\ell$-th summand of \eqref{A1-A4:main1}. For $\eta\in S^E$, we consider
\begin{align}\label{A1-A4-diff}
\E_\eta\Bigg[\prod_{j\in \{0,2\}}\1_{\sigma_j}\circ \xi_t(U_j);B^{U_\ell,t}_t=x\Bigg]-\E\Bigg[\prod_{j\in \{0,2\}}\1_{\sigma_j}\circ \eta(B^{U_j,t}_t);B^{U_\ell,t}_t=x\Bigg]
\end{align}
and use only the partition in \eqref{A1-A4} with $x=U_0$ and $y=U_2$. In this case, on $A_4$, the two products of the indicator functions in \eqref{A1-A4-diff} are equal. Since $\sigma_0\neq \sigma_2$ ensures that the second expectation in \eqref{A1-A4-diff} can be bounded by $\P(M_{U_0,U_2}>t,B^{U_\ell}_t=x)$, \eqref{A1-A4-diff} and a slight extension of \eqref{ineq:Delta1} and \eqref{ineq:Delta2} give
\begin{align}
&\eqspace\E_\eta\Bigg[\prod_{j\in \{0,2\}}\1_{\sigma_j}\circ \xi_t(U_j);B^{U_\ell,t}_t=x\Bigg]\notag\\
&\leq \P(M_{U_0,U_2}>t,B^{U_\ell}_t=x)+\big(1-\e^{-2\mu(\1)t}\big)\P(M_{U_0,U_2}>t,B^{U_\ell}_t=x)\notag\\
&\eqspace+2\mu(\1)\int_0^t \P(M_{U_0,U_2}>s,B^{U_\ell}_t=x)\d s,\quad \forall\;\eta\in S^E,\;x\in E.\label{A1-A4:main2}
\end{align}
The required inequality \eqref{claim:w0-mutation} now follows from \eqref{A1-A4:main1} and \eqref{A1-A4:main2}.\medskip 

\noindent {\rm (3$\cc$)} The proof of \eqref{claim:w0-mutation1} is almost the same as the proof of \eqref{claim:w0-mutation}  and is omitted. 
\end{proof}

\subsection{Full decorrelation on large random regular graphs}\label{sec:rrg}
In this subsection, we give a different proof of the explicit form of \eqref{cond:kappa0} by using the graphs' local convergence. Throughout the rest of this subsection, we use the graph-theoretic  terminologies from \cite{Bollobas,Chung}.

We start with the definition of the random regular graphs.  Fix an integer $k\geq 3$. Choose a sequence $\{N_n\}$ of positive integers such that $N_n\to\infty$ and $k$-regular graphs (without loops and multiple edges) on $N_n$ vertices exist. The existence of $\{N_n\}$ follows from the Erd\H{o}s--Gallai necessary and sufficient condition. Then the random $k$-regular graph on $N_n$ vertices is the graph $G_n$ chosen uniformly  from the set of $k$-regular graphs with $N_n$ vertices. We assume that the randomness defining the graphs is collectively subject to the probability $\mathbf P$ and the expectation $\mathbf E$. 

For applications to the evolutionary dynamics, we need two properties of random walks on the random graphs. See \cite[Section~3]{C:MT} and the references there for more details. First, the random walks are asymptotically irreducible in the following sense:
 \begin{align}\label{prob:comp}
\mathbf P(G_n\mbox{ has only one connected component})\to 1\quad\mbox{ as $n\to\infty$.}
\end{align} 
This property follows since the $\mathbf P$-probability that $G_n$ has a nonzero spectral gap tends to one \cite{Friedman, Bordenave}. See \cite[Lemma~1.7 (d) on pp.6--7]{Chung} for connections between graph spectral gaps and numbers of connected components. Second, $G_n$ for large $n$ is locally like the infinite $k$-regular tree $G_\infty$ in the following sense. Write $q^{(n),\ell}(x,y)$ for the $\ell$-step transition probability of random walk on $G_n$. For any $n, r\in \Bbb N$, write $\mathcal T_n(r)$ for the set of vertices $x$ in $G_n$ such that the subgraph induced by vertices $y$ with $d(x,y)<r$ does not have a cycle, where $d$ denotes the graph distance on $G_n$. Then a standard result for the random graphs $\{G_n\}$ \cite[Section~2.4]{Bollobas:RG} gives 
\begin{align}\label{NTn}
\frac{N_n-\# \mathcal T_n(\ell)}{N_n}\xrightarrow[n\to\infty]{\mathbf P} 0,\quad\forall\;\ell\geq 1,
\end{align}
where $\xrightarrow[n\to\infty]{\mathbf P}$ refers to convergence in $\mathbf P$-probability. Note that $\pi^{(n)}$ is uniform on $G_n$. Consequently, if $q^{(\infty),\ell}$ stands for the $\ell$-step transition probability of random walk on $G_\infty$, then \eqref{NTn} implies that for all $L\in \Bbb N$,
\begin{align}\label{LTL}
\pi^{(n)}\big\{x\in \mathcal T_{n}(2L);q^{(n),\ell}(x,y)=q^{(\infty),\ell}(x,y),\forall\;y,\;\forall\;\ell\in \{1,2,\cdots,L\}\big\}\xrightarrow[n\to\infty]{\mathbf P}1.
\end{align}
Below we write $\P^{(n)}$ and $\E^{(n)}$ for the random walk probability and the expectation under $q^{(n)}=q^{(n),1}$ for $n\in \Bbb N\cup \{\infty\}$. Notations for meeting times, random walks, and related objects associated with $q^{(n)}$ extend to $G_\infty$.

Recall the random variables $U,U',V,V'$ defined at the beginning of Section~\ref{sec:dynamics}. Now we recall some main results for  the limiting distributions of $M_{U,U'}$ and $M_{V,V'}$  on the random regular graphs $\{G_n\}$. First, every (nonrandom) subsequence $\{G_{n_i}\}$ contains a further subsequence $\{G_{n_{i_j}}\}$ such that the following properties hold $\mathbf P$-a.s.:  $G_{n_{i_j}}$ are connected graphs for all (randomly) large $j$ and 
\begin{align}\label{MUU:RG}
\ms L\left(\frac{M_{U,U'}}{N_{n_{i_j}}}\right)\xrightarrow[j\to\infty]{\rm (d)}\ms L\left(\frac{1}{2}\left(\frac{k-1}{k-2}\right)\mathbf e\right)
\end{align}
\cite[Remark~3.1]{C:MT}. 
Here and in what follows, a meeting time scaled by a constant indexed by $n$ is under $\P^{(n)}$, $\mathbf e$ is exponential with mean $1$, and $\ms L(X)$ denotes the distribution of $X$. Moreover, the convergence \eqref{MUU:RG} extends to the convergence of all moments \cite[Theorem~3.3]{C:MT}. By \eqref{MUU:RG} and \eqref{ergodic},
\begin{align}\label{MUVN}
\ms L\Bigg(\frac{M_{V,V'}}{N_{n_{i_j}}}\Bigg)\xrightarrow[j\to\infty]{\rm (d)} 
\frac{1}{k-1}\delta_0+
\frac{k-2}{k-1}\ms L\left(\frac{1}{2}\left(\frac{k-1}{k-2}\right)\mathbf e\right)\quad \mathbf P\mbox{-a.s.}
\end{align}
See \cite[Section~4]{CCC} for details. We work with $\{G_{n_{i_j}}\}$ and write this subsequence as $\{G_{n}\}$ in the rest of this section.

\begin{prop}\label{prop:phase}
By taking a subsequence of $\{G_n\}$ if necessary,  
\begin{align}\label{MUV}
\ms L\left(\frac{M_{V,V'}}{s_{n}}\right)\xrightarrow[n\to\infty]{\rm (d)} 
\frac{1}{k-1}\delta_0+
\frac{k-2}{k-1}\delta_\infty \quad \mathbf P\mbox{-a.s.}
\end{align}
for any sequence $(s_n)$ such that 
\begin{align}\label{ass:sigman}
\lim_{n\to\infty}s_n=\infty\quad\mbox{and}\quad \lim_{n\to\infty}\frac{s_n}{N_n}=0.
\end{align}
\end{prop}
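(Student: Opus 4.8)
The plan is to reduce the claimed weak convergence on the compactified half-line $[0,\infty]$ to the single scalar limit
\[
\lim_{n\to\infty}\P^{(n)}(M_{V,V'}\leq s_n t)=\frac{1}{k-1},\qquad \forall\,t\in(0,\infty).
\]
This characterizes \eqref{MUV}: the limiting measure $\frac{1}{k-1}\delta_0+\frac{k-2}{k-1}\delta_\infty$ has its only atoms at $0$ and $\infty$, every $t\in(0,\infty)$ is a continuity point, and on the compact space $[0,\infty]$ the tight family $\ms L(M_{V,V'}/s_n)$ has every subsequential limit pinned down by the values $\frac{1}{k-1}$ on $(0,\infty)$. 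I would then prove matching bounds from two distinct inputs: the $N_n$-scale limit \eqref{MUVN} for the upper bound, and the local tree structure \eqref{LTL} for the lower bound.

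For the upper bound, fix $t\in(0,\infty)$ and $\epsilon>0$. Since $s_nt/N_n\to0$ by \eqref{ass:sigman}, we have $s_nt\leq \epsilon N_n$ for all large $n$, whence $\P^{(n)}(M_{V,V'}\leq s_nt)\leq \P^{(n)}(M_{V,V'}\leq \epsilon N_n)$. As $\epsilon$ is a continuity point of the limit in \eqref{MUVN}, letting $n\to\infty$ and then $\epsilon\searrow0$ gives $\limsup_n \P^{(n)}(M_{V,V'}\leq s_nt)\leq \frac{1}{k-1}$, the mass \eqref{MUVN} places at $0$.

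For the lower bound I would use that $s_n\to\infty$, so for any fixed $T_0\in(0,\infty)$ we have $s_nt\geq T_0$ eventually, whence $\P^{(n)}(M_{V,V'}\leq s_nt)\geq \P^{(n)}(M_{V,V'}\leq T_0)$. The crux is the finite-time transfer $\lim_n\P^{(n)}(M_{V,V'}\leq T_0)=\P^{(\infty)}(M_{V,V'}\leq T_0)$, where the right side refers to two coalescing rate-$1$ walks on the infinite tree $G_\infty$ started at a fixed adjacent pair. Each walk makes a $\mathrm{Poisson}(T_0)$ number of jumps, so both stay within graph-distance $R$ of $V$ on $[0,T_0]$ off an event of probability at most $2\P(\mathrm{Poisson}(T_0)\geq R)$, uniformly in $n$; on the $\mathbf P$-typical event from \eqref{LTL} that the radius-$R$ ball about $V$ is a tree, the trajectories up to their first exit of that ball couple with walks on $G_\infty$, so the two meeting-by-$T_0$ events agree. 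Sending $n\to\infty$, then $R\to\infty$, then $T_0\to\infty$ yields $\liminf_n\P^{(n)}(M_{V,V'}\leq s_nt)\geq \P^{(\infty)}(M_{V,V'}<\infty)$. Finally I would compute $\P^{(\infty)}(M_{V,V'}<\infty)=\frac{1}{k-1}$: on the tree the distance between the two walkers, sampled at jump times, is a walk on $\Z_+$ absorbed at $0$ that steps down with probability $1/k$ and up with probability $(k-1)/k$; by the gambler's-ruin formula the probability of ever reaching $0$ from $1$ is $\frac{1/k}{(k-1)/k}=\frac{1}{k-1}$. Combining the two bounds yields the scalar limit, hence \eqref{MUV}.

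The main obstacle is the finite-time transfer step: one must uniformly control escape from tree-like neighborhoods and upgrade the in-probability statement \eqref{LTL} to the almost-sure statement needed here. Since only countably many integer values of $R$ and $T_0$ enter, a diagonal extraction along a further subsequence of $\{G_n\}$ — permitted by the phrase ``by taking a subsequence if necessary'' — makes \eqref{LTL} hold $\mathbf P$-a.s.\ for all of them simultaneously, while \eqref{MUVN} already holds $\mathbf P$-a.s.\ along $\{G_n\}$; the two bounds then combine on a single full-measure event.
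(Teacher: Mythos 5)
Your proposal is correct, but it reaches \eqref{MUV} by a genuinely different route than the paper. The paper first fixes an auxiliary slow sequence $(s_n')$ with $\#\mathcal T_n(s_n')/N_n\to 1$ $\mathbf P$-a.s.\ (its Step 1, the same subsequence-extraction device you use), then treats sequences with $s_n'/s_n\to\infty$ by comparing the Laplace transforms $\E^{(n)}\big[\e^{-\lambda M_{V,V'}/s_n}\big]$ and $\E^{(\infty)}\big[\e^{-\lambda M_{a,b}/s_n}\big]$ via a growing-radius tree coupling, with error bounded by $2(N_n-\#\mathcal T_n(s_n'))/N_n+2\E\big[\e^{-\lambda J_{\lfloor s_n'\rfloor/2}/s_n}\big]$; it evaluates $\P^{(\infty)}(M_{a,b}<\infty)=(k-1)^{-1}$ through a Green-function decomposition resting on the Kesten--McKay law; and it must then run a separate Step 4, using the moment convergence in \eqref{MUU:RG} together with \cite[Proposition~4.3~(2)]{CCC}, to cover the faster sequences with $\liminf_n s_n'/s_n<\infty$. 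You avoid the slow/fast dichotomy entirely by sandwiching the distribution function at each fixed $t$: the upper bound $\limsup_n\P^{(n)}(M_{V,V'}\leq s_nt)\leq \tfrac{1}{k-1}$ falls straight out of the $N_n$-scale a.s.\ limit \eqref{MUVN} (legitimately available, as the paper establishes it before the proposition), using correctly that every $\epsilon>0$ is a continuity point of that limit law; the lower bound uses a fixed-time, fixed-radius coupling in place of the paper's growing-radius one, and your monotonicity sandwich makes a single full-measure event (independent of the sequence) work for all $(s_n)$ satisfying \eqref{ass:sigman} simultaneously, which is what the statement demands. Your gambler's-ruin evaluation of $\P^{(\infty)}(M_{a,b}<\infty)$ via the embedded distance chain on the tree (down with probability $1/k$, up with probability $(k-1)/k$, whence hitting probability $\tfrac{1/k}{(k-1)/k}=\tfrac{1}{k-1}$) is elementary and replaces the paper's spectral/Green-function computation; both are tree-specific, so nothing is lost there. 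On balance your argument is shorter and more self-contained (no appeal to \cite{CCC} or to spectral theory beyond what \eqref{MUVN} already encodes), while the paper's Laplace-transform scheme is the one its closing remark flags as adaptable when the locally tree-like hypothesis is dropped; note also that your upper bound silently consumes \eqref{MUU:RG}/\eqref{ergodic} through \eqref{MUVN}, so the total input from the random-graph literature is the same in both proofs.
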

\begin{proof} 
We fix  two adjacent vertices $a$ and $b$ in $G_\infty$ and give the proof in a few steps. 
 \medskip 

\noindent \hypertarget{Step1}{{\bf Step 1.}}  
We claim that by taking a subsequence of $\{G_n\}$ if necessary, it is possible to choose an auxiliary sequence $(s_n')$ of constants such that
\begin{align}\label{tree}
\lim_{n\to\infty}s_n'=\infty,\quad \lim_{n\to\infty}\frac{s_n'}{N_n}=0,\quad \mbox{and}\quad  \lim_{n\to\infty}\frac{\# \mathcal T_n(s_n')}{N_n}=1\quad\mbox{$\mathbf P$-a.s}.
\end{align}
To find this sequence, first, note that by \eqref{NTn}, we can choose a sequence $(\ell_n)$ such that $\ell_n\to\infty$ and $ \# \mathcal T_n(\ell_n)/N_n\to 1$ in $\mathbf P$-probability. Fix a sequence $(s_n')$ such that $s_n'\leq \ell_n$, $s_n'/N_n\to 0$ and $s_n'\to\infty$. Since  $r\mapsto \# \mathcal T_n(r)$ is decreasing, $ \# \mathcal T_n(s_n')/N_n\to 1$ in $\mathbf P$-probability as well. We have proved the existence of $(s_n')$ satisfying \eqref{tree} such that the third limit holds in the sense of convergence in $\mathbf P$-probability. Hence, by using a subsequence of $\{G_n\}$ if necessary, \eqref{tree} holds. \medskip

\noindent \hypertarget{Step2}{{\bf Step 2.}}  
With respect to the sequence $(s_n')$ chosen in \hyperlink{Step1}{Step 1}, let $(s_n)$ be any slower sequence such that 
\begin{align}\label{sigman}
\lim_{n\to\infty} s_n=\infty\quad\mbox{and}\quad 
\lim_{n\to\infty}\frac{s_n'}{s_n}= \infty.
\end{align} 
By the second limits in \eqref{tree} and \eqref{sigman}, $s_n/N_n\to 0$ so \eqref{ass:sigman} holds. In the next paragraph of this step, we show that 
\begin{align}\label{MUV1}
&\P^{(n)}\left(M_{V,V'}/s_n\in \cdot\,\right)\xrightarrow[n\to\infty]{\rm (d)}\P^{(\infty)}(M_{a,b}<\infty)\delta_0+\P^{(\infty)}(M_{a,b}=\infty)\delta_\infty\quad\mbox{$\mathbf P$-a.s.}
\end{align}
\hyperlink{Step3}{Step 3} will show that the limits in  \eqref{MUV}  and \eqref{MUV1} coincide. Additionally, we will include  in \hyperlink{Step4}{Step 4} the other sequences $(s_n)$ that satisfy \eqref{ass:sigman} but fail to satisfy the second limit in \eqref{sigman}.

Write $J_m$ for the $m$-th jump time of $(B^V,B^{V'})$ on $G_n$. For all  $t\in [0,J_m]$, both $d(V,B^V_t)$ and $ d(V',B^{V'}_t)$ are bounded by $ m$. Hence, on $\{(V,V')\in \mathcal T_n(s_n')\times \mathcal T_n(s_n')\}$, the law of $\{(B^V_t,B^{V'}_t);0\leq t\leq J_{\lfloor s_n'\rfloor/2}\}$ under $\P^{(n)}$ equals the law of $\{(B^a_t,B^b_t);0\leq t\leq J_{\lfloor s_n'\rfloor/2}\}$ under $\P^{(\infty)}$. It follows that 
\begin{align}
&\E^{(n)}\Big[\e^{-\lambda M_{V,V'}/s_n};M_{V,V'}\leq J_{\lfloor s_n'\rfloor/2 }\Big]-\E^{(\infty)}\Big[\e^{-\lambda M_{a,b}/s_n};M_{a,b}\leq J_{\lfloor s_n'\rfloor /2}\Big]\notag\\
=&\sum_{(x,y)\in [\mathcal T_n(s_n')\times \mathcal T_n(s_n')]^\complement}\P\big((V,V')=(x,y)\big)\notag\\
&\quad \times \Big(\E^{(n)}\Big[\e^{-\lambda M_{x,y}/s_n};M_{x,y}\leq J_{\lfloor s_n'\rfloor/2 }\Big]-\E^{(\infty)}\Big[\e^{-\lambda M_{a,b}/s_n};M_{a,b}\leq J_{\lfloor s_n'\rfloor /2}\Big]\Big),\notag
\end{align}
and so
\begin{align}\label{RW11}
\left|\E^{(n)}\big[\e^{-\lambda M_{V,V'}/s_n}\big]-\E^{(\infty)}\big[\e^{-\lambda M_{a,b}/s_n}\big]\right|
\leq 2\frac{N_n-\#\mathcal T_n( s_n')}{N_n}+2\E\big[\e^{-\lambda J_{\lfloor s_n'\rfloor/2}/s_n}\big].
\end{align}
On the right-hand side of \eqref{RW11}, the first term converges to zero $\mathbf P$-a.s. by the third limit in (\ref{tree}), and the choice of $(s_n)$ from (\ref{sigman}) gives $\E[\e^{-\lambda J_{\lfloor s_n'\rfloor/2}/s_n}]=(2/(2+\lambda/s_n))^{\lfloor s_n'\rfloor/2}\to 0$. The right-hand side of \eqref{RW11} thus tends to zero $\mathbf P$-a.s. Additionally,  $\E^{(\infty)}[\e^{-\lambda M_{a,b}/s_n}]\to \P^{(\infty)}(M_{a,b}<\infty)$ by the first limit in \eqref{sigman}. We have proved \eqref{MUV1}.\medskip

\noindent \hypertarget{Step3}{{\bf Step 3.}} In this step,  we show that 
\begin{align}\label{Mxy}
\P^{(\infty)}(M_{a,b}<\infty)=(k-1)^{-1}
\end{align}
and only consider random walks on $G_\infty$.

By symmetry, the hitting time $H_{a,b}$ of $b$ by $B^a$  has the same distribution as $2M_{a,b}$. Hence,   
\begin{align*}
\P^{(\infty)}(M_{a,b}<\infty)&=\P^{(\infty)}(H_{a,b}<\infty)
=\left.\E^{(\infty)}\left[\int_0^\infty \1_{\{b\}}(B^a_t)\d t\right]\right/ \E^{(\infty)}\left[\int_0^\infty\1_{\{b\}}(B^b_t)\d t\right],
\end{align*}
where the second equality follows from a standard Green function decomposition for hitting times of points. The Green functions in the last equality satisfy 
\[
\frac{k-1}{k-2}=
\E^{(\infty)}\left[\int_0^\infty \1_{\{b\}}(B^b_t)\d t\right]=1+\E^{(\infty)}\left[\int_0^\infty \1_{\{b\}}(B^a_t)\d t\right].
\]
Here, the first equality is implied by the Kesten--McKay law for the spectral measure of $G_\infty$ (see \cite[(3.3)]{C:MT} and the references there); the second equality uses the strong Markov property at the first jump time of $B^b$ and the symmetry of $G_\infty$. The identity in \eqref{Mxy} follows from the last two displays. \medskip

\noindent \hypertarget{Step4}{{\bf Step 4.}} To complete the proof, we extend \eqref{MUV1} to all the faster sequences $(s_n)$ such that (\ref{ass:sigman}) holds, but now, $\liminf_{n\to\infty}s_n'/s_n<\infty$. Fix any sequence $(s''_n)$ satisfying $s_n''\to\infty$ and $s_n'/s_n''\to\infty$ as in \hyperlink{Step2}{Step 2}. Then it is enough to show \eqref{MUV1} for all sequences $(s_n)$ satisfying (\ref{ass:sigman}) and $s_n\geq cs_n''$ for some constant $c\in (0,\infty)$.

As recalled above, the convergence in \eqref{MUU:RG} extends to the convergence of all moments. Hence,
\begin{align}\label{gamma0}
\lim_{n\to\infty}\frac{2\E^{(n)}[M_{U,U'}]}{N_n}= \frac{k-1}{k-2} \quad \mathbf P\mbox{-a.s.}
\end{align}
Additionally, by \eqref{MUV1} and \eqref{Mxy}, it holds that
\begin{align}\label{gamma}
\lim_{n\to\infty}\frac{2\E^{(n)}[M_{U,U'}]}{N_n}\P^{(n)}(M_{V,V'}>s_n''t)=1,\quad\forall\;t\in (0,\infty); \quad \mathbf P\mbox{-a.s.}
\end{align}
and so, by \eqref{MUU:RG} and \cite[Proposition~4.3 (2)]{CCC}, \eqref{gamma} with $s_n''$ replaced by $s_n$ holds. We obtain \eqref{ass:sigman} from this limit and \eqref{gamma0}. The proof is complete.
\end{proof}

\begin{rmk}
McKay \cite[Theorem~1.1]{McKay} derives the limiting spectral measures of large random regular graphs. There the randomness of graphs only plays the role of inducing asymptotically deterministic properties. For the present case, we could have worked with given sequences of $k$-regular graphs and obtained the same limit if the graphs have spectral gaps bounded away from zero and are locally tree-like. (Dropping the locally tree-like assumption calls for a different evaluation of the limit.) We choose to work with the above context to explain how the randomness of graphs should be handled for the convergence of the evolutionary game model.  \hfill $\blacksquare$
\end{rmk}

\end{document}